\newcommand{\R}{\mathbb{R}}
\newcommand{\Z}{\mathbb{Z}}
\newcommand{\re}{\mathbb{R}}
\newcommand{\mZ}{\mathcal{Z}}
\newcommand{\N}{\mathbb{N}}
\newcommand{\ep}{\varepsilon}
\newcommand{\sPM}{\operatorname{PM}}
\newcommand{\sPMF}{\operatorname{PMF}}
\newcommand{\sDPMF}{\operatorname{DPMF}}
\newcommand{\JF}{\operatorname{\mathbb{JF}}}
\newcommand{\J}{\operatorname{\mathbb{J}}}
\newcommand{\DPMF}{\operatorname{\mathbb{DPMF}}}
\newcommand{\RDPM}{\operatorname{\mathbb{RDPM}}}
\newcommand{\RDPMF}{\operatorname{\mathbb{RDPMF}}}
\newcommand{\auto}{\mathrel{\substack{\vspace{-0.1ex}\\\displaystyle\leadsto\\[-0.9em] \displaystyle\leadsto}}}
\newcommand{\loc}{_{\mathrm{loc}}}
\newcommand{\obl}{\mathrm{Obl}}
\newcommand{\orz}{\mathrm{Hor}}
\newcommand{\weakto}{\rightharpoonup}
\newcommand{\Leb}{\ensuremath{\mathscr{L}}}
\newcommand{\mres}      {\mathop{\hbox{\vrule height 7pt width .5pt depth 0pt \vrule height .5pt width 6pt depth 0pt}}}
\newtheorem{thm}{Theorem}[section]
\newtheorem{rmk}[thm]{Remark}
\newtheorem{prop}[thm]{Proposition}
\newtheorem{defn}[thm]{Definition}
\newtheorem{lemma}[thm]{Lemma}
\title{Staircasing effect for minimizers of the one-dimensional discrete Perona-Malik functional}
\author{
Nicola Picenni\vspace{1ex}\\ 
{\normalsize Scuola Normale Superiore} \\
{\normalsize PISA (Italy)}\\
{\normalsize e-mail: \texttt{nicola.picenni@sns.it}}
}
\date{}
\begin{document}

\maketitle

\begin{abstract}

We consider the one-dimensional Perona-Malik functional, that is the energy associated to the celebrated forward-backward equation introduced by P. Perona and J. Malik in the context of image processing, with the addition of a forcing term. We discretize the functional by restricting its domain to a finite dimensional space of piecewise constant functions, and by replacing the derivative with a difference quotient.

We investigate the asymptotic behavior of minima and minimizers as the discretization scale vanishes. In particular, if the forcing term has bounded variation, we show that any sequence of minimizers converges in the sense of varifolds to the graph of the forcing term, but with tangent component which is a combination of the horizontal and vertical directions.

If the forcing term is more regular, we also prove that minimizers actually develop a microstructure that looks like a piecewise constant function at a suitable scale, which is intermediate between the macroscopic scale and the scale of the discretization.


\vspace{6ex}

\noindent{\bf Mathematics Subject Classification 2020 (MSC2020):} 
49J45, 35B44, 47J06, 35A35.

\vspace{6ex}


\noindent{\bf Key words:} 
Perona-Malik functional, discrete approximation, Gamma-convergence, blow-up, piecewise constant functions, local minimizers, varifolds.

\end{abstract}

\section{Introduction}

We consider the one-dimensional Perona-Malik functional with fidelity term, that is the following
\begin{equation}\label{def:PMF}
\sPMF(u):= \int_0 ^1 \log\left(1+u'(x)^2\right)dx +\beta \int_0 ^1 (u(x)-f(x))^2\,dx,
\end{equation}
where $\beta>0$ is a positive constant and $f \in L^2((0,1))$ is a fixed function. We call \emph{forcing term} the function $f$, and \emph{fidelity term} the second integral in (\ref{def:PMF}), because it penalizes the distance between the function $u$ and the forcing term.

The principal part of (\ref{def:PMF}) is the functional
\begin{equation}\label{def:PM}
\sPM(u):= \int_0 ^1 \log\left(1+ u'(x)^2\right) dx,
\end{equation}
whose lagrangian $\phi(p)=\log(1+p^2)$ is not convex, and has a convex envelope which is identically 0 on $\R$. This implies that the functional (\ref{def:PM}) is not lower-semicontinuous, and its relaxation vanishes identically on every reasonable functional space. As a consequence, it is well-known that
$$\inf\{\sPMF(u):u \in C^1((0,1))\}=0 \qquad \forall f \in L^2((0,1)).$$

The formal gradient flow of (\ref{def:PM}) turns out to be the forward-backward parabolic equation
\begin{equation*}
u_t= \left(\frac{2 u_x}{1+u_x^2}\right)_{\! x}=\frac{2-2u_x^2}{(1+u_x^2)^2} u_{xx},
\end{equation*}
that is the one-dimensional version of the celebrated equation introduced by P. Perona and J. Malik in \cite{PeronaMalik}.

Many different approximations and regularizations of both the functional and the equation have been proposed in the literature in order to try to explain the so-called \emph{Perona-Malik paradox}, namely the fact that such an ill-posed problem turns out to behave nicely in numerical applications (see, for example, \cite{1992-SIAM-Lions,Kichenassamy,1996-Duke-DeGiorgi,2001-CPAM-Esedoglu,2007-Amann,2009-JDE-Guidotti,2012-ARMA-SmaTes}).

The regularization of (\ref{def:PMF}) by singular perturbation was considered in \cite{GP:fastPM-CdV}, where several properties concerning the asymptotic behavior of minimizers were proved.

Here we focus instead on the approximation obtained by discretization, namely for every positive integer number $n\geq 2$ we consider the functional
\begin{equation}\label{def:sPMF_n}
\sDPMF_n (u):=\int_0 ^{1-1/n} \log\left(1+\left(\frac{u(x+1/n)-u(x)}{1/n}\right)^2 \right)dx + \beta\int_0 ^1 (u(x)-f(x))^2\,dx,
\end{equation}
where the function $u$ is assumed to be constant in each interval of the form $[k/n,(k+1)/n)$, for $k \in \{0,\dots, n-1\}$.

Since the space of admissible functions $u$ is finite dimensional and the functional is continuous and coercive, we know that for every $n$ there exists at least a minimizer. The aim of this paper is to investigate the asymptotic behavior of these minimizers as $n$ tends to infinity, in the same way as it was done in \cite{GP:fastPM-CdV} with minimizers of the singularly perturbed functional.

Our results are the following.
\begin{itemize}

\item For every $f\in BV((0,1))$ we prove that any sequence $\{u_n\}$ of minimizers of (\ref{def:sPMF_n}) converges strictly in $BV((0,1))$ to the function $f$. We deduce that a suitable sequence of varifolds associated to $\{u_n\}$ converges to a varifold supported on the graph of $f$, but with tangent component consisting of a combination of horizontal and vertical lines (see Theorem~\ref{thm:varifold}).

\item For every $f\in H^1((0,1))$ we compute the asymptotic behavior of the minimum values of the functionals (\ref{def:sPMF_n}) (see Theorem~\ref{thm:min}).

\item For every $f \in C^1([0,1])$, every sequence of points $x_n\to x_\infty \in (0,1)$ and every sequence $\{u_n\}$ of minimizers of (\ref{def:sPMF_n}) we consider the sequences of blow-ups
\begin{equation}\label{def:blow-up}
y\mapsto \frac{u_n(x_n+\omega_n y) - f(x_n)}{\omega_n} \qquad\quad \mbox{and} \qquad\quad y\mapsto \frac{u_n(x_n+\omega_n y) - u_n(x_n)}{\omega_n}. \end{equation}

We prove that if $\omega_n=(\log n /n)^{1/3}$, then these sequences converge (up to subsequences) to suitable staircase-like functions, which we can characterize and depend on $f'(x_\infty)$ (see Theorem~\ref{thm:blow-up}). This means that minimizers develop a microstructure at the scale $\omega_n$, which is different from the discretization scale $1/n$.
\end{itemize}

These results correspond more or less to those obtained in \cite{GP:fastPM-CdV} for the second order approximation of the Perona-Malik functional. However, we point out that here the first result holds for a more general class of functions $f$, since the proof does not rely on the blow-up theorem. A detailed comparison between the results of the present paper and the results of \cite{GP:fastPM-CdV} is provided in Remark~\ref{rmk:result_comparison}.


\paragraph{\textmd{\textit{Overview of the technique}}}

In the first result the convergence of the minimizers to the forcing term follows from a  truncation argument. In order to prove the varifold convergence, we divide the interval $(0,1)$ into six different zones, depending on the values of the (discrete) derivative, and we consider the restriction of minimizers to these zones. Then we show that the limit of each restriction produces a different component of the limit varifold (or vanishes).

The proof of the other two results, instead, follows the same strategy used in \cite{GP:fastPM-CdV}. Indeed we observe that the blow-ups defined in (\ref{def:blow-up}) minimize suitably rescaled versions of the functionals (\ref{def:sPMF_n}), which have a non-trivial $\Gamma$-limit that turns out to be finite only on piecewise constant functions. At this point our second and third results follow from a compactness result, some estimates on minimum values and a characterization of local minimizers for the limit functional.

\paragraph{\textmd{\textit{Structure of the paper}}}

The paper is organized as follows. In Section 2 we introduce some notation and we state the main results. In Section 3 we consider the required reascaling of the discrete Perona-Malik funcitonal, we compute its $\Gamma$-limit and we prove a compactness result and some additional properties of recovery sequences. In Section 4 we consider the limit functional, we provide some estimates for minimum values with and without boundary conditions, and we characterize the local minimizers. In Section 5 we prove our main results exploiting the results proved in Section 3 and Section 4.

\setcounter{equation}{0}
\section{Notation and statements}


\paragraph{\textmd{\textit{Discrete functions and the discrete Perona-Malik functional}}}
For every positive real number $\delta>0$ and every integer number $z \in \Z$ we set
$$I_{\delta,z}:=[z\delta,(z+1)\delta),$$
and for every real number $x\in \R$ we consider its upper and lower $\delta$-approximations, namely the numbers
\begin{equation}\label{def:a_delta-b_delta}
x_{\delta,*}:=\delta\lfloor x/\delta \rfloor,\qquad
x_\delta ^*:=\delta\lceil x/\delta \rceil,
\end{equation}
where for every real number $\alpha \in \R$ we denote by $\lfloor \alpha \rfloor$ the largest integer smaller than or equal to $\alpha$, and with $\lceil \alpha \rceil$ the smallest integer larger than or equal to $\alpha$. Let us set also
$$\mathcal{Z}_\delta(a,b):=\left\{z \in \Z: I_{\delta,z}\subseteq [a_{\delta,*},b_\delta^*] \right\}=\left\{\lfloor a/\delta \rfloor,\dots,\lceil b/\delta \rceil -1\right\}.$$

Now we consider the following space of discrete functions
$$PC_\delta (a,b):=\left\{u :[a_{\delta,*},b_\delta ^*] \to \R : u \mbox{ is constant in } I_{\delta,z} \ \forall z \in \mathcal{Z}_\delta(a,b)\right\},$$
with the understanding that $u(b_\delta^*):=u(b_\delta^*-\delta)$.



We define the discrete derivative $D^{\delta}u$ as
$$D^{\delta}u(x):=\frac{u(x+\delta)-u(x)}{\delta}\qquad \forall x \in [a_{\delta,*},b_\delta^*-\delta].$$

For every positive real number $\beta>0$, every open interval $(a,b) \subset \R$, every function $f \in L^2((a,b))$ and every positive integer number $n \geq 2$ we consider the one-dimensional discrete Perona-Malik functional with fidelity term
\begin{equation}\label{def:DPMF_n}
\DPMF_n(\beta,f,(a,b),u):=\int_{a_{1/n,*}} ^{b_{1/n}^*-1/n} \log\left(1+D^{1/n} u (x)^2 \right)dx +\beta \int_{a} ^{b} (u(x)-f(x))^2\,dx,
\end{equation}
defined for every $u \in PC_{1/n}(a,b)$.


We now consider the minimum problem for the functional (\ref{def:DPMF_n}) on the interval $(0,1)$, namely
\begin{equation}\label{def:min}
m(n,\beta,f):=\min\left\{\DPMF_n (\beta,f,(0,1),u): u \in PC_{1/n}(0,1) \right\}.\end{equation}

We observe that a minimizer exists because the space $PC_{1/n}(0,1)$ has finite dimension and the functional is continuous and coercive with respect to $u$.

Moreover, for every $\beta>0$ and every $f\in L^2((0,1))$ it holds that
\begin{equation}\label{m_to_0}
\lim_{n\to +\infty} m(n,\beta,f)=0,\end{equation}
because of the sublinearity of the logarithm, and therefore, if $\{u_n\}$ is a sequence of minimizers for $m(n,\beta,f)$, we have that
\begin{equation}\label{un_L2_f}
u_n\to f \qquad \mbox{in }L^2((0,1)).
\end{equation}

\paragraph{\textmd{\textit{BV functions and strict convergence}}}
Here we introduce some notation for bounded variation functions of one real variable, and we recall the definition and some basic properties of the strict convergence.

We denote by $BV((a,b))$ the space of functions of bounded variation on an interval $(a,b)$. For a function $u$ in $BV((a,b))$ we denote by $Du$ its derivative, which is a signed measure, that can be decomposed into the sum of its diffuse part $\widetilde{D}u$ and its atomic part $D^J u$ (see \cite[Section~3.9]{AFP}). Using the Hahn decomposition, we can further decompose these measures into their positive and negative parts, so we can write
$$Du=D_+ u - D_- u, \qquad \widetilde{D} u=\widetilde{D}_+ u-\widetilde{D}_- u, \qquad D^J u=D^J_+ u - D^J_- u,$$
and, consequently,
$$D_+ u=\widetilde{D}_+ u + D^J _+ u,\qquad D_- u=\widetilde{D}_- u + D^J _- u.$$

As usual, we set $|Du|:=D_+ u + D_- u$. We also denote by $S_u:=\{x\in (a,b):Du(\{x\})\neq 0\}$ the jump set of $u$, that we divide into the two sets
$$S_{u}^+:=\{x\in (a,b):Du(\{x\})> 0\}
\qquad \mbox{and} \qquad
S_{u}^-:=\{x\in (a,b):Du(\{x\})< 0\}.$$

Finally, for every bounded variation function of one real variable $u$ we always consider representatives that are continuous outside the jump set, while for $x\in S_u$ we set
$$u(x^-):=\lim_{y\to x^-} u(y), \qquad u(x^+):=\lim_{y\to x^+} u(y),$$
and
$$\mathcal{J}_u(x):=\left[\liminf_{y\to x} u(y),\limsup_{y\to x}u(y)\right]=\left[\min\{u(x^-),u(x^+)\},\max\{u(x^-),u(x^+)\}\right].$$

We recall the definition of strict convergence of bounded variation functions (see \cite[Definition~3.14]{AFP}).

\begin{defn}[Strict convergence]
\begin{em}
Let $(a,b)\subseteq\re$ be an interval. A sequence of functions $\{u_{n}\}\subseteq BV((a,b))$ converges \emph{strictly} to some $u_{\infty}\in BV((a,b))$, and we write
\begin{equation}
u_{n}\auto u_{\infty}
\quad\text{in }BV((a,b)),
\nonumber
\end{equation}
if
\begin{equation}
u_{n}\to u_{\infty} \text{ in } L^{1}((a,b))
\qquad\text{and}\qquad
|Du_{n}|((a,b))\to|Du_{\infty}|((a,b)).
\nonumber
\end{equation}

A sequence of functions $\{u_{n}\}\subseteq BV\loc(\re)$ converges \emph{locally strictly} to some $u_{\infty}\in BV\loc(\re)$, and we write
\begin{equation}
u_{n}\auto u_{\infty}
\quad\text{in }BV\loc(\re),
\nonumber
\end{equation} 
if $u_{n}\auto u_{\infty}$ in $BV((a,b))$ for every interval $(a,b)\subseteq\re$ whose endpoints are not jump points of the limit $u_{\infty}$.
\end{em} 
\end{defn}

\begin{rmk}[Consequences of strict convergence]\label{rmk:strict}
\begin{em}

Let us assume that $u_{n}\auto u_{\infty}$ in $BV((a,b))$. Then the following facts hold true.
\begin{enumerate}
\renewcommand{\labelenumi}{(\arabic{enumi})}

\item  It turns out that $\{u_{n}\}$ is bounded in $L^{\infty}((a,b))$, and $u_{n}\to u_{\infty}$ in $L^{p}((a,b))$ for every $p\geq 1$ (but not necessarily for $p=+\infty$). 

\item For every $x\in [a,b]$, and every sequence $x_{n}\to x$, it turns out that
\begin{equation*}
\liminf_{y\to x}u_{\infty}(y)\leq\liminf_{n\to +\infty}u_{n}(x_{n})\leq
\limsup_{n\to +\infty}u_{n}(x_{n})\leq\limsup_{y\to x}u_{\infty}(y),
\end{equation*}
and in particular $u_{n}(x_{n})\to u_{\infty}(x)$ whenever $u_{\infty}$ is continuous in $x$, and the convergence is uniform in $(a,b)$ if the limit $u_{\infty}$ is continuous in $(a,b)$.

\item It turns out that $u_{n}\auto u_{\infty}$ in $BV((c,d))$ for every interval $(c,d)\subseteq(a,b)$ whose endpoints are not jump points of the limit $u_{\infty}.$

\item The positive and negative part of the distributional derivatives converge separately in the \emph{closed} interval (see~\cite[Proposition~3.15]{AFP}). More precisely, for every continuous test function $\phi:[a,b]\to\R$ it turns out that
\begin{equation*}
\lim_{n\to +\infty}\int_{[a,b]}\phi(x)\,dD_{+}u_{n}(x)=
\int_{[a,b]}\phi(x)\,dD_{+}u_{\infty}(x),
\end{equation*}
and similarly with $D_{-}u_{n}$ and $D_{-}u_{\infty}$.
\end{enumerate}

\end{em}
\end{rmk}

\paragraph{\textmd{\textit{Staircase-like functions}}}

Before stating our results we also need to introduce some notation and to recall some terminology that was used in \cite{GP:fastPM-CdV} to describe staircase-like functions.

\begin{defn}[Canonical staircases]\label{defn:staircase}
\begin{em}

Let $S:\re\to\re$ be the function defined by
\begin{equation*}
S(x):=2\left\lfloor\frac{x+1}{2}\right\rfloor
\qquad
\forall x\in\re.
\end{equation*}

For every pair $(H,V)$ of real numbers, with $H>0$, we call \emph{canonical $(H,V)$-staircase} the function $S_{H,V}:\re\to\re$ defined by
\begin{equation}
S_{H,V}(x):=V\cdot S(x/H)
\qquad
\forall x\in\R.
\label{defn:SC0}
\end{equation}

\end{em}
\end{defn}

Roughly speaking, the graph of $S_{H,V}$ is a staircase with steps of horizontal length $2H$ and vertical height $2V$. The origin is the midpoint of the horizontal part of one of the steps. The staircase degenerates to the null function when $V=0$, independently of the value of~$H$.

\begin{defn}[Translations of the canonical staircase]\label{defn:translations}
\begin{em}

Let $(H,V)$ be a pair of real numbers, with $H>0$, and let $S_{H,V}$ be the function defined in (\ref{defn:SC0}). Let $v:\re\to\re$ be a function.
\begin{itemize}

\item We say that $v$ is an \emph{oblique translation} of $S_{H,V}$, and we write $v\in\obl(H,V)$, if there exists a real number $\tau_{0}\in[-1,1]$ such that
\begin{equation}
v(x)=S_{H,V}(x-H\tau_{0})+V\tau_{0}
\qquad
\forall x\in\re.
\nonumber
\end{equation}

\item We say that $v$ is a \emph{graph translation of horizontal type} of $S_{H,V}$, and we write $v\in\orz(H,V)$, if there exists a real number $\tau_{0}\in[-1,1]$ such that
\begin{equation}
v(x)=S_{H,V}(x-H\tau_{0})
\qquad
\forall x\in\re.
\nonumber
\end{equation}

\end{itemize}

\end{em}
\end{defn}

\paragraph{\textmd{\textit{Main results}}}

We can now state our main results. The first one improves the convergence (\ref{un_L2_f}) of minimizers to the forcing term in the case in which $f$ has bounded variation.

\begin{thm}\label{thm:varifold}
Let $\beta>0$ be a positive real number and let $f \in BV((0,1))$ be a function. For every integer $n \geq 2$ let $u_n\in PC_{1/n}(0,1)$ be a minimizer for the problem (\ref{def:min}). Then the sequence $\{u_n\}$ converges to $f$ in the following senses.
\begin{enumerate}
\renewcommand{\labelenumi}{(\arabic{enumi})}

\item \emph{(Strict convergence).} It turns out that $u_{n}\auto f$ strictly in $BV((0,1))$.

\item \emph{(Convergence as varifolds).}  
Let $\widehat{u}_n:[0,1]\to \R$ denote the piecewise affine function such that $\widehat{u}_n(z/n)=u_n(z/n)$ for every $z\in\{0,\dots,n\}$, so $\widehat{u}_n '(x)=D^{1/n}u(x)$ for every $x\in (0,1) \setminus \{1/n,2/n,\dots,(n-1)/n\}$.



Then for every continuous test function
\begin{equation*}
\phi:[0,1]\times\R\times\left[-\frac{\pi}{2},\frac{\pi}{2}\right]\to\R
\end{equation*}
it turns out that
\begin{align}
\lim_{n\to +\infty}&\int_{0}^{1}
\phi\left(x,\widehat{u}_{n}(x),\arctan(\widehat{u}_n' (x))\strut\right)\sqrt{1+\widehat{u}_n'(x)^{2}}\,dx=
\nonumber\\[1ex]
&\int_{0}^{1}\phi(x,f(x),0)\,dx
+\int_0 ^1 \phi\left(x,f(x),\frac{\pi}{2}\right)d\widetilde{D}_+f(x)
+\int_0 ^1 \phi\left(x,f(x),-\frac{\pi}{2}\right)d\widetilde{D}_-f(x)\nonumber\\[1ex]
&+\sum_{x\in S_{f}^+} \int_{\mathcal{J}_f(x)} \phi\left(x,s,\frac{\pi}{2}\right) ds
+\sum_{x\in S_{f}^-} \int_{\mathcal{J}_f(x)} \phi\left(x,s,-\frac{\pi}{2}\right) ds.
\label{th:varifold}
\end{align}
\end{enumerate}

\end{thm}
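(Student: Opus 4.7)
My plan is to prove the two parts in sequence: part (1) via a truncation argument, and part (2) via a six-zone decomposition of the discrete derivative followed by a change of variables.

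For part (1), set $M := \|f\|_\infty$ and replace $u_n$ by the truncation $(u_n \wedge M) \vee (-M)$. This is a contraction toward $f$ on $[-M,M]$ (since $|f| \leq M$), so the fidelity term decreases; moreover, the discrete derivative is pointwise reduced under this truncation, so the log term decreases too. By minimality I may therefore assume $\|u_n\|_\infty \leq M$, whereupon (\ref{un_L2_f}) upgrades to $u_n \to f$ in every $L^p((0,1))$. For the strict $BV$ convergence, lower semicontinuity gives $\liminf|Du_n|((0,1)) \geq |Df|((0,1))$. The matching upper bound is the delicate step: I compare $u_n$ to a piecewise constant competitor built from $f$, using isolated jumps for the jump part of $f$ (each costing $O((\log n)/n)$ in the $\sPM$ part) and fine jumps approximating the continuous part; the minimality of $u_n$ then constrains its own positive and negative discrete differences separately, yielding $\limsup |Du_n|((0,1)) \leq |Df|((0,1))$.

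For part (2), set $p_{n,k} := n\bigl(u_n((k{+}1)/n) - u_n(k/n)\bigr)$, so that on $[k/n,(k+1)/n)$ the interpolant $\widehat{u}_n$ has constant slope $p_{n,k}$ and the integrand in (\ref{th:varifold}) depends on $k$ only through $p_{n,k}$ and the values of $\widehat{u}_n$. I partition $\{0,\ldots,n-1\}$ into six zones by sign and magnitude of $p_{n,k}$: small ($|p_{n,k}| \leq \varepsilon_n$), moderate ($\varepsilon_n < |p_{n,k}| \leq M_n$), and large ($|p_{n,k}| > M_n$), with $\varepsilon_n \to 0$ and $M_n \to \infty$ tuned so that the total Lebesgue measure of non-small zones vanishes (using that $\log(1+\varepsilon_n^2) \cdot (\text{measure of non-small}) \leq \DPMF_n(u_n) \to 0$). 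On the small zone $\arctan(p_{n,k}) \to 0$ and $\sqrt{1+p_{n,k}^2} \to 1$, producing the horizontal integral $\int_0^1 \phi(x,f(x),0)\,dx$. On each non-small interval the change of variables $s = \widehat{u}_n(x)$ rewrites the contribution as $\int \phi(k/n + o(1), s, \pm\pi/2)\,ds$ integrated over the vertical range of $\widehat{u}_n$, with the sign dictated by that of $p_{n,k}$.

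The main obstacle will be matching these vertical contributions with the correct components of $Df$. Large zones whose jump amplitude stays bounded away from $0$ concentrate near the jump points $S_f^\pm$ and, after the change of variables above, reproduce the jump integrals $\sum_{x \in S_f^\pm} \int_{\mathcal{J}_f(x)} \phi(x,s,\pm\pi/2)\,ds$; the remaining moderate and small-amplitude large zones should be distributed across $[0,1]$ and feed the diffuse integrals against $\widetilde{D}_\pm f$. To assign the vertical mass of each zone to the appropriate part of $Df$, I rely on the strict $BV$ convergence from part (1) — specifically on the separate convergence of $D_\pm u_n$ to $D_\pm f$ as measures on the closed interval recalled in Remark~\ref{rmk:strict}(4) — which lets me read off the localization of the jumps of $u_n$ from the decomposition of $Df$ on the $f$ side.
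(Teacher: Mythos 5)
Your part (2) outline is broadly the strategy of the paper (a six-zone splitting of the slopes, with the vertical contributions matched to $\widetilde{D}_\pm f$ and to the jump integrals via the separate convergence $D_\pm\widehat{u}_n\stackrel{*}{\weakto}D_\pm f$ of Remark~\ref{rmk:strict}(4), plus a change of variables $s=\widehat{u}_n(x)$ near the jump points), so the decisive issue is part (1), on which everything else rests.

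The gap is in the upper bound $\limsup_n |Du_n|((0,1))\leq |Df|((0,1))$. You propose to obtain it by comparing the energy of $u_n$ with that of a competitor built from $f$, asserting that ``the minimality of $u_n$ then constrains its own positive and negative discrete differences separately.'' No such mechanism exists: an energy comparison only yields an upper bound on $\DPMF_n(\beta,f,(0,1),u_n)$, and neither term of the functional controls the total variation of $u_n$ from above. The fidelity term only gives $L^2$ closeness to $f$, and the principal part is sublinear: a discrete function with $m$ increments each of size $J/m$ has total variation $J$ but logarithmic cost $\frac{m}{n}\log\left(1+\frac{n^2J^2}{m^2}\right)$, which tends to $0$ for, say, $m=\sqrt{n}$ and any fixed $J$. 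So a vanishing energy bound is compatible with arbitrarily large total variation, and the competitor argument cannot close the estimate. What is needed --- and what the paper actually does --- is a local, one-sided variational argument: write $|Du_n|((0,1))$ as an alternating sum over the maximal intervals of discrete local maximality and minimality of $u_n$; on each maximal interval of local maximality the supremum of $f$ there must be at least the value of $u_n$, since otherwise lowering $u_n$ on that interval by a small $\tau>0$ strictly decreases both the logarithmic term and the fidelity term, contradicting minimality (and symmetrically at local minima); the alternating sum is then dominated by an alternating sum of values of $f$, hence by $|Df|((0,1))$. This perturbation idea is absent from your proposal and cannot be replaced by an energy comparison. (Your truncation step and the $\liminf$ inequality are fine, and once part (1) is in place your part (2) plan is workable, though the localization of the large-amplitude zones near $S_f$ is most cleanly handled by defining those zones spatially, via small neighborhoods of the finitely many jumps of $f$ above a threshold, as the paper does.)
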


\begin{rmk}[Varifold interpretation]
\begin{em}

Let us limit ourselves for a while to test functions such that $\phi(x,s,\pi/2)=\phi(x,s,-\pi/2)$ for all admissible values of $x$ and $s$. Let us observe that the function $p\mapsto\arctan(p)$ is a homeomorphism between the projective line and the interval $[-\pi/2,\pi/2]$ with the endpoints identified. Under these assumptions we can interpret the two sides of (\ref{th:varifold}) as the action of two suitable varifolds on the test function $\phi$.

In the left-hand side we have the varifold associated to the graph of $\widehat{u}_n$ in the canonical way, namely with ``weight'' (projection into $\re^{2}$) equal to the restriction of the one-dimensional Hausdorff measure to the graph of $\widehat{u}_{n}$, and ``tangent component'' in the direction of the derivative $\widehat{u}_{n}'$. In the right-hand side we have a varifold with
\begin{itemize}

\item  ``weight'' equal to the one-dimensional Hausdorff measure restricted to the complete graph of $f$ (namely the graph with the addition of the vertical segments $\{x\}\times \mathcal{J}_f(x)$ that join the extremities of the graph at jump points $x\in S_f$), multiplied by the density
\begin{equation}
\vartheta(x,y):=\begin{cases}
\frac{1+|f'(x)|}{\sqrt{1+|f'(x)|^{2}}},&\mbox{if }x\notin S_f \mbox{ and } y=f(x),\\
1 &\mbox{if }x \in S_f \mbox{ and } y\in \mathcal{J}_f(x),
\end{cases}
\nonumber
\end{equation}
with the understanding that
$$\frac{1+|f'(x)|}{\sqrt{1+|f'(x)|^{2}}}=1,$$
if $f'(x)=\pm\infty$. In this way, we can give a meaning to the this expression for every $x\in (0,1)$ outside a set that is negligible with respect to both the Lebesgue measure and $|Df|$. In particular, $\vartheta(x,y)$ is well defined for $\mathcal{H}^1$ almost every $(x,y)$ in the complete graph of $f$.

\item  ``tangent component'' in the point $(x,y)$ equal to
\begin{equation}
T(x,y):=\begin{cases}
\lambda(x)\,\delta_{(1,0)}+(1-\lambda(x))\,\delta_{(0,1)},&\mbox{if }x\notin Sf \mbox{ and } y=f(x),\\
\delta_{(0,1)} &\mbox{if }x \in Sf \mbox{ and } y\in \mathcal{J}_f(x).
\end{cases}
\nonumber
\end{equation}

where $\delta_{(1,0)}$ and $\delta_{(0,1)}$ are the Dirac measures concentrated in the horizontal direction $(1,0)$ and in the vertical direction $(0,1)$, respectively, and
$$\lambda(x):=\frac{1}{1+|f'(x)|},$$
with the understanding that $\lambda(x)=0$ if $f'(x)=\pm\infty$. As above, it turns out that $T(x,y)$ is well defined for $\mathcal{H}^1$ almost every $x$ in the complete graph of $f$.
\end{itemize}
 
It follows that statement~(2) of Theorem~\ref{thm:varifold} above is a reinforced version of varifold convergence. The reinforcement consists in considering the vertical tangent line in the direction $(0,1)$ as different from the vertical tangent line in the direction $(0,-1)$.

\end{em}
\end{rmk}

In order to state the next results, for every integer $n \geq 2$ let us set
\begin{equation}\label{def:omega_n-delta_n}
\omega(n):=\left(\frac{\log n}{n}\right)^{1/3} \qquad \mbox{and}\qquad \delta(n):=\frac{1}{n\omega(n)}=\frac{1}{n^{2/3}\, (\log n)^{1/3}}.
\end{equation}

The next theorem concerns the asymptotic behavior of minimum values and applies to more regular forcing terms $f \in H^1((0,1))$.




\begin{thm}\label{thm:min}
Let $\beta>0$ be a positive real number and let $f \in H^1((0,1))$. 
Then the minimum value defined in (\ref{def:min}) satisfies
$$\lim_{n\to +\infty} \frac{m(n,\beta,f)}{\omega(n) ^2}= \beta^{1/3} \int_0 ^1 |f'(x)|^{2/3}\,dx. $$
\end{thm}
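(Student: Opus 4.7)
The plan is to establish matching $\limsup$ and $\liminf$ inequalities. The scale $\omega(n)$ arises from a one-step optimization: on an interval of length $2H$ where $f$ has slope $V\ne 0$, approximating $f$ by a piecewise constant function with a single discrete jump of size $2VH$ costs $n^{-1}\log(1+(2nVH)^2)$ in the Perona-Malik part and $(2/3)\beta V^{2}H^{3}$ in the fidelity. Minimizing the sum divided by $2H$ over $H$ yields the optimal half-period $H_\ast=(\beta V^{2})^{-1/3}\omega(n)$ and density $\beta^{1/3}|V|^{2/3}\omega(n)^{2}$, matching the claim. Note also that $|g|^{2/3}\leq 1+g^{2}$, so the right-hand side is finite whenever $g=f'\in L^{2}$.

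\textbf{Upper bound.} First suppose $f\in C^{1}([0,1])$. Cover $(0,1)$ by subintervals $J_{k}$ of fixed small length $\eta>0$, freeze $V_{k}=f'(x_{k})$ at some $x_{k}\in J_{k}$, and on each $J_{k}$ construct a competitor $u_{n}^{(k)}$ whose graph is a translate of the canonical $(H_{n}^{(k)},V_{k}H_{n}^{(k)})$-staircase of Definition~\ref{defn:staircase} with $H_{n}^{(k)}=(\beta V_{k}^{2})^{-1/3}\omega(n)$ (rounded to a multiple of $1/n$), anchored so as to follow $f$. Glue these on adjacent $J_{k}$'s, inserting transition regions of size $O(\omega(n))$, whose total cost is $O(\omega(n)^{3}/\eta)=o(\omega(n)^{2})$. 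A direct computation reproducing the one-step heuristic gives $\DPMF_n(\beta,f,J_{k},u_{n}^{(k)})=\beta^{1/3}|V_{k}|^{2/3}\eta\,\omega(n)^{2}+o(\omega(n)^{2})$, and the Riemann sum yields the upper bound after letting $n\to+\infty$ then $\eta\to 0$. For general $f\in H^{1}$, approximate $f$ in $H^{1}$ by $f_{k}\in C^{1}$, test the $f_{k}$-competitor against $\DPMF_n(\beta,f,\cdot,\cdot)$ (extra fidelity cost $\beta\|f-f_{k}\|_{L^{2}}^{2}\to 0$), and use continuity of $g\mapsto\int_{0}^{1}|g|^{2/3}$ under $L^{2}$ convergence, which follows from a.e.\ convergence along a subsequence together with the uniform domination $|g|^{2/3}\leq 1+g^{2}$.

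\textbf{Lower bound.} Let $u_{n}$ be a minimizer and partition $(0,1)$ into subintervals $J_{k}$ of fixed length $\eta$. On each $J_{k}$ rescale via $v_{n}^{(k)}(y):=(u_{n}(x_{k}+\omega(n)y)-f(x_{k}))/\omega(n)$ for $y\in(0,\eta/\omega(n))$. By the $\Gamma$-liminf inequality for the rescaled discrete Perona-Malik energy proved in Section~3, combined with the minimum-value estimates of Section~4 for the limit functional with linearly growing boundary data, one obtains
\begin{equation*}
\omega(n)^{-2}\,\DPMF_n(\beta,f,J_{k},u_{n})\;\geq\;\beta^{1/3}|\overline{V}_{k}|^{2/3}\eta\;+\;o(1),
\end{equation*}
where $\overline{V}_{k}:=\eta^{-1}\int_{J_{k}}f'(x)\,dx$. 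Summing over $k$ and sending $n\to+\infty$ then $\eta\to 0$, using that $\sum_{k}\overline{V}_{k}\mathbbm{1}_{J_{k}}\to f'$ in $L^{2}$ together with the continuity of $\Phi(t)=|t|^{2/3}$ noted above, yields the desired $\liminf$ inequality.

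\textbf{Main obstacle.} The principal difficulty lies in the rigorous localization of the lower bound for merely $H^{1}$ data. Since $\Phi(t)=|t|^{2/3}$ is concave, Jensen's inequality goes the wrong way, and one cannot simply replace $f'(x_{k})$ by a pointwise value; the correct substitute is the average $\overline{V}_{k}$ (admissible because the localized fidelity only sees the $L^{2}$-mean behavior of $f$ on $J_{k}$), followed by $L^{2}$-continuity of $\Phi$. A secondary technical point is the boundary effects on each rescaled block $v_{n}^{(k)}$, which are absorbed by invoking the Section~4 minimum-value estimates with prescribed linear boundary behavior, so that the $O(1/\eta)$ such contributions remain of order $o(1)$ after summation.
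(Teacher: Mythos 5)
Your heuristic for the optimal half-period and the energy density $\beta^{1/3}|V|^{2/3}\omega(n)^2$ is correct, and the overall architecture (localize, reduce to a linear forcing term, use the optimal staircase density, sum) is the same as the paper's. However, there is a genuine error in the choice of localization scale, and it breaks both bounds. You partition $(0,1)$ into intervals $J_k$ of \emph{fixed} length $\eta$ and replace $f$ on $J_k$ by an affine function (slope $f'(x_k)$ or $\overline{V}_k$). The $L^2$ distance between $f$ and that affine function on $J_k$ is a constant independent of $n$, while the entire energy is of order $\omega(n)^2\to 0$; so the replacement cannot be absorbed into the error terms. Concretely, your localized lower bound $\omega(n)^{-2}\DPMF_n(\beta,f,J_k,u_n)\geq\beta^{1/3}|\overline{V}_k|^{2/3}\eta+o(1)$ is \emph{false} for non-affine $f$: by the theorem itself (applied on $J_k$), the left-hand side tends to $\beta^{1/3}\int_{J_k}|f'|^{2/3}\,dx$ for minimizers, and by strict concavity of $t\mapsto|t|^{2/3}$ this is strictly smaller than $\beta^{1/3}|\overline{V}_k|^{2/3}\eta$ whenever $f'$ is non-constant on $J_k$ (e.g.\ $f'=2V$ on half of $J_k$ and $0$ on the other half gives $2^{-1/3}\beta^{1/3}|V|^{2/3}\eta<\beta^{1/3}|V|^{2/3}\eta$). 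The same scale mismatch undermines the upper bound: testing the competitor built for a $C^1$ approximation $f_k$ against the functional with forcing term $f$ produces an extra fidelity term $(1+1/\eta)\beta\|f-f_k\|_{L^2}^2$, which is constant in $n$ and hence, after division by $\omega(n)^2$, diverges for each fixed $k$; a diagonal choice $k=k(n)$ would require uniformity of your $C^1$ construction in $k$ that you do not establish.

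The paper's proof avoids this by localizing at the \emph{mesoscopic} scale $L_n\omega(n)$ (with $L$ a large parameter sent to $+\infty$ at the very end): it replaces $f$ by its piecewise affine interpolant $f_{n,L}$ on intervals of that length, for which $\omega(n)^{-2}\int(f-f_{n,L})^2\,dx\to 0$ by a Poincar\'e inequality together with the $L^2$ convergence of the mesoscopic averages $M_{n,L,k}$ of $f'$. After rescaling, each piece becomes \emph{exactly} the linear-forcing minimum problem $\mu_n(\beta,L_n,M_{n,L,k})$ (resp.\ $\mu_n^*$ with boundary conditions for the upper bound), and one concludes using the uniform-in-$M$ convergence $\mu_n\to\mu$ of Proposition~\ref{prop:mu_mu*}, the two-sided bounds of Proposition~\ref{prop:estimate_mu} that are linear in $L$ up to additive constants (which become negligible after dividing by $L$ and letting $L\to+\infty$), a truncation at a level $M_0$, and the Riemann-sum convergence $L_n\omega(n)\sum_k\phi(M_{n,L,k})\to\int_0^1\phi(f')\,dx$ for $\phi$ of at most quadratic growth. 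The concavity issue you flag is real, but it is resolved by the vanishing mesh, not by passing to averages at a fixed scale; at fixed scale the averaged bound overclaims. To repair your proof you would need to replace $\eta$ by $L\omega(n)$ throughout and add the uniformity and gluing estimates that this entails.
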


Finally, in the last main result we consider the case in which $f$ is of class $C^1$, and we prove that the blow-ups of minimizers (namely the functions defined in (\ref{def:blow-up})) converge to translations of a suitable staircase, with parameters depending on the derivative of $f$ in the center of the blow-ups.

\begin{thm}\label{thm:blow-up}
Let $\beta>0$ be a positive real number and let $f \in C^1([0,1])$ be a function. For every integer $n\geq 2$ let $u_n \in PC_{1/n}(0,1)$ be a minimizer for the functional (\ref{def:DPMF_n}). Let also $\{x_n\}\subset [0,1]$ be a family of points such that $x_n=k_n/n$ for some $k_n\in\{0,\dots,n-1\}$ and $x_n \to x_\infty\in(0,1)$.

Let us consider the functions $v_n,w_n \in PC_{\delta(n)}(-x_n/\omega(n),(1-x_n)/\omega(n))$ defined by
\begin{eqnarray}
w_n(y)&:=&\frac{u_n(x_n+\omega(n) y)-f(x_n)}{\omega(n)},\\
v_n(y)&:=&\frac{u_n(x_n+\omega(n) y)-u_n(x_n)}{\omega(n)},
\end{eqnarray}
and let us consider the canonical $(H,V)$-staircase with parameters
\begin{equation}\label{def:H_V_x}
H:=\left(\frac{1}{\beta |f'(x_\infty)|^2}\right)^{1/3}, \qquad V:=f'(x_\infty)H,
\end{equation}
with the understanding that this staircase is identically equal to $0$ when $f'(x_\infty)=0$.

Then the following statements hold true.

\begin{enumerate}
\renewcommand{\labelenumi}{(\arabic{enumi})}
\item The sequence $\{w_n\}$ is relatively compact with respect to locally strict convergence, and every limit point is an oblique translation of the canonical $(H,V)$-staircase.

More precisely, for every sequence $\{n_k\}$ of integer numbers such that $n_k\to +\infty$, there exist a subsequence $\{n_{k_h}\}$ and a function $w_\infty \in \obl(H,V)$ such that 
$$w_{n_{k_h}}\auto w_\infty \qquad \mbox{in }BV_{loc}(\R).$$

\item The sequence $\{v_n\}$ is relatively compact with respect to locally strict convergence, and every limit point is a graph translation of horizontal type of the canonical $(H,V)$-staircase.

More precisely, for every sequence $\{n_k\}$ of integer numbers such that $n_k\to +\infty$, there exist a subsequence $\{n_{k_h}\}$ and a function $v_\infty \in \orz(H,V)$ such that 
$$v_{n_{k_h}}\auto v_\infty \qquad \mbox{in }BV_{loc}(\R).$$

\item For every $w \in \obl(H,V)$ there exists a sequence $\{x_n'\}\subset (0,1)$ such that
\begin{equation}\label{x'-x<H}
\limsup_{n\to +\infty} \frac{x_n'-x_n}{\omega(n)}\leq H,
\end{equation}
and
\begin{equation}\label{fakebu_to_w}
\frac{u_n(x_n '+\omega(n) y)-f(x_n')}{\omega(n)} \auto w(y) \quad \mbox{in } BV_{loc}(\R).
\end{equation}

\item For every $v \in \orz(H,V)$ there exists a sequence $\{x_n'\}\subset (0,1)$ such that (\ref{x'-x<H}) holds and
\begin{equation}\label{truebu_to_v}
\frac{u_n(x_n '+\omega(n) y)-u_n(x_n')}{\omega(n)} \auto v(y) \quad \mbox{in } BV_{loc}(\R).
\end{equation}
\end{enumerate}
\end{thm}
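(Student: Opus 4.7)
The overall plan is to follow the strategy used in \cite{GP:fastPM-CdV} and announced in the introduction: recognize $w_n$ and $v_n$ as minimizers of a rescaled discrete Perona-Malik functional whose $\Gamma$-limit (computed in Section~3) is finite only on piecewise constant functions, and combine this with the compactness result of Section~3 and the classification of local minimizers of the limit functional from Section~4. I first perform the change of variable $x=x_n+\omega(n)y$. Since $1/n=\delta(n)\omega(n)$, this converts the discretization scale $1/n$ in $x$ into $\delta(n)$ in $y$, so $w_n$ and $v_n$ belong to $PC_{\delta(n)}$ on the rescaled interval, and a direct computation shows that, after multiplication by $1/\omega(n)^{2}$, the functional $\DPMF_n(\beta,f,(0,1),\cdot)$ evaluated at $u_n$ rewrites as the rescaled discrete Perona-Malik functional of Section~3 evaluated at $w_n$, with forcing term $f_n(y):=(f(x_n+\omega(n)y)-f(x_n))/\omega(n)$. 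By the $C^{1}$ regularity of $f$, this forcing converges uniformly on compact sets to the linear function $y\mapsto f'(x_\infty)\,y$. The analogous statement for $v_n$ differs only by the addition of the vertical constant $(u_n(x_n)-f(x_n))/\omega(n)$ in the forcing.

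Next I would establish compactness and identify the limits as local minimizers. Theorem~\ref{thm:min} provides the sharp upper bound $m(n,\beta,f)=O(\omega(n)^{2})$, which combined with the localization results of Section~3 bounds the rescaled energy of $w_n$ and $v_n$ uniformly on every fixed compact interval of $\R$. The compactness result of Section~3 then yields subsequences $w_{n_{k_h}}\auto w_\infty$ and $v_{n_{k_h}}\auto v_\infty$ locally strictly in $BV_{loc}(\R)$, with $w_\infty$ and $v_\infty$ piecewise constant. The decisive step is the upgrade from these being limits of almost-minimizers to being genuine local minimizers of the limit functional with linear forcing $y\mapsto f'(x_\infty)\,y$. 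I would argue by contradiction: if some compactly supported perturbation $\widetilde{w}$ decreased the limit energy of $w_\infty$ by $\eta>0$, then the recovery-sequence part of the $\Gamma$-convergence would produce discrete competitors that, once grafted into $u_n$ on a window of size proportional to $\omega(n)$ around $x_n$, would lower the energy of $u_n$ by at least $\eta\omega(n)^{2}/2$, contradicting the minimality of $u_n$.

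At this point the characterization of local minimizers proved in Section~4 identifies $w_\infty$ as an oblique translation of the canonical $(H,V)$-staircase with parameters (\ref{def:H_V_x}), so $w_\infty\in\obl(H,V)$. For $v_\infty$, the normalization $v_n(0)=0$ pins the vertical phase to zero and forces $v_\infty\in\orz(H,V)$. This proves (1) and (2). For the surjectivity statements (3) and (4) I would exploit the fact that $\obl(H,V)$ and $\orz(H,V)$ are continuously parametrized by a single phase $\tau_0\in[-1,1]$, and that translating the center by $s\omega(n)$ with $s$ varying over an interval of length $2H$ sweeps through an entire period of the rescaled staircase. Given a target $w\in\obl(H,V)$ with phase $\tau_0^{*}$, I would pick at each scale $n$ a value $s_n\in[0,H]$ that realizes the best phase-match on a fixed window (for instance by minimizing an $L^{1}$ distance from $w$) and set $x_n':=x_n+\omega(n)s_n$. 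A diagonal/compactness argument, combined with the continuity of the limit phase in the shift parameter, then yields (\ref{fakebu_to_w}); (\ref{x'-x<H}) holds by construction. The proof of (\ref{truebu_to_v}) is identical, with $\orz(H,V)$ in place of $\obl(H,V)$.

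The main obstacle is the upgrade from $\Gamma$-convergence to local minimality of the limit in the second step. The blow-up domain grows like $1/\omega(n)$, the forcing term $f_n$ is only approximately linear (with an error of order $\omega(n)$ on bounded intervals of the rescaled variable), and the recovery sequence used to contradict the minimality of $u_n$ must be discretely compatible with $u_n$ and localized at a scale large enough to carry the perturbation but small enough that the linearization error is dominated by the energy gain $\eta\omega(n)^{2}$. A careful cutoff and interpolation at a well-chosen intermediate scale is required, and I expect this to be the technical heart of the proof.
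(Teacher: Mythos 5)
Your overall architecture coincides with the paper's: rescale so that $w_n$ becomes a minimizer of $\RDPMF_{n}$ on the blow-up domain with forcing $g_n(y)=(f(x_n+\omega(n)y)-f(x_n))/\omega(n)\to f'(x_\infty)\,y$ locally uniformly, apply the compactness and $\Gamma$-convergence of Section~3, identify every limit point as an entire local minimizer of $\JF(4/3,\beta,f'(x_\infty)y,\cdot,\cdot)$, and invoke the classification of Proposition~\ref{prop:local_minimizers}; statements (3) and (4) by sweeping the phase over one period are also as in the paper. Two points, however, need repair. First, the ``technical heart'' you anticipate --- grafting recovery competitors back into $u_n$ at the original scale with a cutoff at an intermediate scale --- is unnecessary, and your energy bookkeeping for compactness is off: the global bound $m(n,\beta,f)=O(\omega(n)^2)$ only yields $\RDPMF_{n}\leq C/\omega(n)$ on the whole blow-up domain, which diverges, so a uniform bound on fixed compact intervals is \emph{not} a consequence of ``localization'' alone. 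The paper obtains it in Proposition~\ref{prop:convergence_minimizers} by comparing $w_n$ with a competitor that vanishes on the bulk of a window and by a truncation argument, both exploiting that $w_n$ is already a local minimizer of the \emph{rescaled} functional; once that is in place, local minimality of the limit follows by testing against boundary-value-matched recovery sequences (statement (4) of Theorem~\ref{thm:gamma-convergence}) directly at the rescaled level, the linearization error being absorbed by the locally uniform convergence of $g_n$. No intermediate-scale interpolation is required.

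Second, and more substantively, your deduction of statement (2) from statement (1) is incomplete. Writing $v_n=w_n-w_n(0)$, the claim that ``the normalization $v_n(0)=0$ pins the vertical phase to zero'' fails precisely when the limit $w_\infty\in\obl(H,V)$ has a jump at $y=0$ (phase $\tau_0=\pm1$): locally strict convergence then only confines the limit points of $w_{n}(0)$ to the jump interval $[-|V|,|V|]$, so a priori $v_\infty$ could be $w_\infty$ shifted vertically by an \emph{intermediate} amount, which does not belong to $\orz(H,V)$. Ruling this out requires knowing that $w_n$ eventually takes no values in compact subsets of the gap, which is exactly the extra ``preimage of compact sets'' conclusion built into statement (5) of Theorem~\ref{thm:gamma-convergence} and into Proposition~\ref{prop:convergence_minimizers}; with it, $w_{n}(0)$ can only accumulate at the two adjacent step values and $v_\infty$ is one of the two corresponding horizontal graph translations. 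You need to add this ingredient (or an equivalent argument) to close statement (2).
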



\begin{rmk}[Interpretation of Theorem~\ref{thm:blow-up} in the framework of \cite{2001-CPAM-AM}]
\begin{em}
As it was pointed out in \cite[Remark~2.12]{GP:fastPM-CdV}, from this statement one can easily deduce some properties of the sequence of maps $U_n:(0,1)\to BV_{loc}(\R)$ that associate to any point $x\in (0,1)$ the functions
$$U_n(x)(y):=\frac{u_n(x+\omega(n) y)-f(x)}{\omega(n)},$$
where $u_n$ is extended to $\R$ by setting $u_n(x)=u_n(0)$ for $x<0$ and $u_n(x)=u_n(1)$ for $x>1$.

In particular, if we endow $BV_{loc}(\R)$ with any distance that induces the locally strict convergence, then Theorem~\ref{thm:blow-up} implies that for every interval $[a,b]\subset (0,1)$ the graphs of the restrictions of $U_n$ to $[a,b]$ converge in the Hausdorff sense to the set $\{(x,S)\in [a,b]\times BV_{loc}(\R): S\in \obl(H(x),V(x))\}$, where $H(x)$ and $V(x)$ are defined as in (\ref{def:H_V_x}) with $x_\infty=x$, and the understanding that $\obl(H(x),V(x))$ contains only the null function if $f'(x)=0$.

As it was also observed in \cite[Remark~2.12]{GP:fastPM-CdV}, from this we deduce also that the Young measures associated to the sequence $\{U_n\}$ converge to the map which associates to every $x$ the probability supported on $\obl(H(x),V(x))$ that is invariant by oblique translations. This is the notion of convergence introduced in \cite{2001-CPAM-AM} to deal with multi-scale problems, with the difference that here we consider Young measures on $BV_{loc}(\R)$ endowed with the locally strict convergence, which is much stronger than the weak* convergence in $L^\infty$.
\end{em}
\end{rmk}

\begin{rmk}[Differences and analogies with the second order approximation]\label{rmk:result_comparison}
\begin{em}
Let us compare the above results to the results obtained in \cite{GP:fastPM-CdV} for the second order approximation of the Perona-Malik functional.
\begin{itemize}

\item Theorem~\ref{thm:varifold} corresponds to \cite[Theorem~2.14]{GP:fastPM-CdV}. Here, however, we only assume that $f \in BV((0,1))$, while in \cite[Theorem~2.14]{GP:fastPM-CdV} it is assumed that $f \in C^1([0,1])$.

This is an important difference, because in \cite{GP:fastPM-CdV} the proof of the strict convergence was based on the blow-up result, which holds only for $C^1$ forcing terms, while here the proof of the strict convergence relies on a truncation argument, which does not require regularity of the forcing term, but can not work when second derivatives are involved.

In both cases, the varifold convergence is then deduced from the strict convergence, but here it requires more work, because the forcing term can have jumps.

\item Theorem~\ref{thm:min} corresponds to \cite[Theorem~2.2]{GP:fastPM-CdV}. Again, here we assume only that $f \in H^1((0,1))$, while in \cite[Theorem~2.2]{GP:fastPM-CdV} it is assumed that $f \in C^1([0,1])$.

However, in this case the proofs are similar, and actually it would be possible to extend \cite[Theorem~2.2]{GP:fastPM-CdV} to forcing terms of class $H^1$, basically with the same modifications that we have introduced here, as it was already claimed in \cite[Section~8]{GP:fastPM-CdV}.

\item Theorem~\ref{thm:blow-up} corresponds to \cite[Theorem~2.9]{GP:fastPM-CdV}. Here the only difference is that vertical translations of the canonical staircase cannot arise as limits of blow-ups with the discrete approximation, because discrete functions are not continuous, but have actual jumps near the jump points of the staircase.

As for the proof, the ideas are basically the same, with the notable difference that in the case of the discrete approximation it is easier to obtain a uniform bound on the rescaled energy, so here we do not need the iterative argument of \cite[Proposition~6.5]{GP:fastPM-CdV}.

\end{itemize}
\end{em}
\end{rmk}

\setcounter{equation}{0}
\section{The rescaled functionals}

\subsection{Gamma-convergence and compactness}

This section deals with a rescaled version of the discrete Perona-Malik functionals that arises naturally in the study of the minimum problem (\ref{def:min}). The convergence of different rescalings of the discrete Perona-Malik functional (and of the corresponding gradient-flows) has been widely investigated in the last decades (see \cite{2001-CPAM-Esedoglu,2003-M3AS-MN,GG:grad-est,2008-JDE-BNPT,2009-Calcolo-BNPT,2011-M3AS-BelNovPao,2011-SIAM-SlowSD,2018-ActApplMath-BraVal,2023-PM-TV}). The particular rescaling that we need here is the following
\begin{equation}\label{def:RDPM_n}
\RDPM_n ((a,b),u):=
\begin{cases}
\displaystyle{\frac{1}{\omega(n)^2} \int_{a_{\delta(n),*}} ^{b_{\delta(n)}^*-\delta(n)} \log\left(1+D^{\delta(n)} u(x)^2\right) \,dx} &\mbox{if }u \in PC_{\delta(n)}(a,b),\\
+\infty &\mbox{otherwise},
\end{cases}
\end{equation}
where $\omega(n)$ and $\delta(n)$ are defined by (\ref{def:omega_n-delta_n}) and $a_{\delta(n),*}$ and $b_{\delta(n)}^*$ are defined according to (\ref{def:a_delta-b_delta}).

The $\Gamma$-limit of functionals analogous to (\ref{def:RDPM_n}) has been computed in \cite{2009-Calcolo-BNPT}, relying on the characterization of the local minimizers with Dirichlet boundary conditions. Actually, these functionals also fit, at least partially, into the framework of \cite{2001-GM}, where a general theory for discrete functionals with convex-concave lagrangians was developed. More precisely, the functionals (\ref{def:RDPM_n}) satisfy the assumptions of the liminf inequality \cite[Theorem~3.1]{2001-GM}, but not the assumptions for the pointwise convergence and the limsup inequality \cite[Theorem~3.2]{2001-GM}, whose proof anyway is quite elementary.

However, here we also need a compactness statement, and some additional properties of recovery sequences that are not provided in those papers, nor can be easily deduced from the proofs contained therein.

Hence we include here a different proof of the $\Gamma$-convergence, a compactness result and some additional properties of recovery sequences for (\ref{def:RDPM_n}) which are inspired by the ideas used in \cite{1998-ABG,2008-TAMS-BF,GP:fastPM-CdV} with the second order approximation.

In order to give precise statements, we need to define the space $S((a,b))$ of step functions with finitely many jumps on an interval $(a,b)$, that is the space of all functions $u:(a,b) \to \R$ for which there exists a finite (or empty) set $S_u=\{s_1,\dots,s_m\}\subset (a,b)$, a constant $c\in\R$ and a function $J_u:S_u\to \R\setminus\{0\}$ such that
$$u(x)=c+\sum_{s \in S_u}J_u(x)\mathbbm{1}_{[s,b)}(x)\qquad \forall x \in (a,b).$$

Equivalently, the space $S((a,b))$ may be defined as the space of functions in $u \in BV((a,b))$ such that $\widetilde{D}u=0$ and $S_u$ is finite.

Consistently with the notation that we introduced for BV functions, we call \emph{jump points} of $u$ the points in $S_u$ and for every jump point $x\in S_u$ we denote by $u(x^-)$ and $u(x^+)$ the two values of $u$ near $x$.

Similarly, we write $u(a)$ and $u(b)$ to the denote the values of a function $u\in S((a,b))$ near the endpoints of the interval, even if the function is defined only in the interior.

We also define the set $S_{loc}(\R)$ of all functions $u:\R \to \R$ whose restriction to every bounded interval $(a,b)$ belongs to $S((a,b))$.

For a function $u \in S((a,b))$ and an open interval $\Omega \subseteq (a,b)$ we define the functional
\begin{equation}\label{def:J}
\J(\Omega,u):=\mathcal{H}^0(S_u\cap \Omega),
\end{equation}
that simply counts the number of jump points of $u$ in $\Omega$.

The result is the following.

\begin{thm}\label{thm:gamma-convergence}
Let $(a,b)\subset \R$ be an interval and $p \in [1,+\infty)$ be a real number. Then the following statements hold true.
\begin{enumerate}
\renewcommand{\labelenumi}{(\arabic{enumi})}

\item Let $\{n_k\}$ be a sequence of integers such that $n_k \to +\infty$, and let $\{u_k\}\subset L^p((a,b))$ be a family of functions such that
$$\sup_{k \in \N}\left\{ \RDPM_{n_k}((a,b),u_k)+\|u_k\|_\infty \right\}<+\infty.$$

Then there exist a sequence of positive integers $k_h\to +\infty$ and a function $u \in S((a,b))$ such that $u_{k_h}\to u$ in $L^p ((a,b))$.

\item Let us extend the functional $\J((a,b),u)$ to $L^p((a,b))$ by setting it equal to $+\infty$ outside its original domain. Then for every function $u\in L^p((a,b))$ and every sequence $\{u_n\}$ of functions such that $u_n \in PC_{\delta(n)}(a,b)$ for every $n\geq 2$ and $u_n \to u$ in $L^p((a,b))$ it turns out that
\begin{equation}\label{liminf_RDPM}
\liminf_{n \to +\infty} \RDPM_n((a,b),u_n)\geq \frac{4}{3} \J((a,b),u).\end{equation}

\item For every function $u\in S((a,b))$ there exists a sequence $\{u_n\}$ of functions such that $u_n \in PC_{\delta(n)}(a,b)$ for every $n\geq 2$, $u_n \to u$ in $L^p((a,b))$ and
\begin{equation}\label{limsup_RDPM}
\limsup_{n \to +\infty} \RDPM_n((a,b),u_n)\leq \frac{4}{3} \J((a,b),u).\end{equation}

\item Let $u \in S((a,b))$ be a nonconstant function and let $\{A_n\}\subset \R$ and $\{B_n\}\subset \R$ be two sequences of real numbers such that $A_n\to u(a)$ and $B_n\to u(b)$. Then there exists a sequence of functions $\{u_n\}$ such that $u_n \in PC_{\delta(n)}(a,b)$ for every $n \geq 2$ and the following hold
\begin{gather}
u_n(a)=A_n,\qquad u_n(b)=B_n,\qquad u_n \to u \quad \mbox{in }L^p((a,b)),\nonumber\\[0.5ex]
\lim_{n\to +\infty} \RDPM_n((a,b),u_n)=\frac{4}{3}\J((a,b),u).
\label{eq:u_delta_recovery}
\end{gather}

If $u \in S((a,b))$ is constant, the same holds if we replace (\ref{eq:u_delta_recovery}) with
$$\limsup_{n\to +\infty} \RDPM_n((a,b),u_n)\leq \frac{4}{3}.$$

\item Let $u \in S((a,b))$ be a function, $\{n_k\}$ be a sequence of integers such that $n_k\to +\infty$ and $\{u_k\}$ be a sequence of functions such that $u_k\in PC_{\delta(n_k)}(a,b)$ for every $k\in\N$, $u_k\to u$ in $L^p((a,b))$ and
\begin{equation}\label{hp:u_n_recovery}
\lim_{k\to +\infty}\RDPM_{n_k}((a,b),u_k))=\frac{4}{3}\J((a,b),u).
\end{equation}

Then actually $u_k\auto u$ strictly in $BV((a,b))$.

Moreover, for every compact set $K\subset \R$ such that $u^{-1}(K)=\emptyset$ it turns out that also $u_k ^{-1} (K)=\emptyset$ when $k$ is sufficiently large.

\item Let $u\in S((a,b))$ be a constant function, $\{n_k\}$ be a sequence of integers such that $n_k \to +\infty$ and $\{u_k\}$ be a sequence of functions such that $u_k\in PC_{\delta(n_k)}(a,b)$ for every $k\in\N$, $u_k\to u$ in $L^p((a,b))$ and
\begin{equation}\label{hp:u_n_recovery_weak}
\limsup_{k\to +\infty}\RDPM_{n_k}((a,b),u_k))\leq \frac{4}{3}.
\end{equation}

Then actually $u_k\auto u$ strictly in $BV((c,d))$ for every interval $(c,d)$ with $a<c<d<b$. In particular, since $u$ is continuous, the convergence is also locally uniform.

\end{enumerate}
\end{thm}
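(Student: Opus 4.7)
The plan is to argue by contradiction. Fix an interval $(c,d)$ with $a<c<d<b$, let $c_0$ denote the constant value of $u$, and observe that $u_k\to c_0$ in $L^p((c,d))$ already gives $L^1$ convergence there. Hence strict convergence in $BV((c,d))$ can only fail if $|Du_k|((c,d))\not\to 0$; passing to a subsequence (not relabeled) I may assume $|Du_k|((c,d))\geq\eta$ for every $k$ and some fixed $\eta>0$. The locally uniform conclusion will then follow automatically from strict convergence and Remark~\ref{rmk:strict}(2), since the limit $u$ is continuous, so the task reduces to deriving a contradiction.

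The heart of the argument is a counting of discrete jumps. A single discrete step of height $h$ contributes $\psi_k(h):=\log(1+h^2/\delta(n_k)^2)/\log n_k$ to $\RDPM_{n_k}((a,b),u_k)$, and using $\delta(n)/\omega(n)^2=1/\log n$ together with $\log(1/\delta(n))\sim(2/3)\log n$ one checks that $\psi_k(\epsilon)\to 4/3$ for every fixed $\epsilon>0$. Combined with $\limsup\RDPM_{n_k}\leq 4/3$, this forces at most one discrete step of size $\geq\epsilon$ in all of $(a,b)$ for $k$ large. To estimate the total variation produced by steps of size $<\epsilon$ I split the range: for $h\in[\delta(n_k),\epsilon]$ the ratio $\psi_k(h)/h$ is minimized near $h=\epsilon$ and satisfies $\psi_k(h)/h\geq 4/(3\epsilon)-o(1)$, which gives a TV bound of $(3\epsilon/4+o(1))$ times the corresponding energy; for $h\leq\delta(n_k)$ the inequality $\log(1+t)\geq t/2$ combined with Cauchy--Schwarz yields a bound of order $\omega(n_k)\sqrt{(b-a)\cdot(\text{energy})}$. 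Choosing $\epsilon=\eta/4$, the variation on $(a,b)$ coming from steps of size $<\epsilon$ is at most $\eta/4+o(1)$.

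Because $|Du_k|((c,d))\geq\eta>\eta/4+o(1)$, there must be at least one big step in $(c,d)$; combined with the "at most one" bound above, it is unique, lies in $(c,d)$, and has size $H_k\geq 3\eta/4$. This big step absorbs $4/3-o(1)$ of the energy, leaving only $o(1)$ for small steps, so by the same small-step estimate the total small-step variation on $(a,b)$ tends to $0$. Let $z_k\in(c,d)$ be the location of the big step and $A_k,B_k$ the values of $u_k$ immediately on either side, so $|A_k-B_k|=H_k\geq 3\eta/4$. Using $L^1$ convergence to $c_0$ together with Chebyshev's inequality, I select $x_k^-\in(a,c)$ and $x_k^+\in(d,b)$ with $u_k(x_k^\pm)\to c_0$. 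On $[x_k^-,z_k]$ and on $[z_k+\delta(n_k),x_k^+]$ only small steps of $u_k$ occur, so $|A_k-u_k(x_k^-)|$ and $|B_k-u_k(x_k^+)|$ are both controlled by the vanishing small-step variation; hence $A_k,B_k\to c_0$, contradicting $|A_k-B_k|\geq 3\eta/4$.

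The main obstacle is the small-step bound in the second paragraph. The ratio $\psi_k(h)/h$ has no uniform positive lower bound globally (it vanishes both as $h\to 0$ and as $h\to\infty$), so one must split the size range into the regimes $h\leq\delta(n_k)$, $\delta(n_k)\leq h\leq\epsilon$, and $h\geq\epsilon$, and handle each with its own inequality. Without such a split one cannot rule out a total variation of order $\eta$ produced by infinitely many small steps within the energy budget $4/3$, which would preclude the contradiction in the last step.
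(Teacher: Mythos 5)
Your proposal addresses only statement (6) of the theorem. Statements (1)--(5) --- the compactness of energy-bounded sequences, the liminf inequality, the limsup inequality, the recovery sequences with prescribed boundary values, and the strict convergence of recovery sequences together with the level-set separation property --- are not touched, and they are where most of the work of the theorem lies (in the paper, statement (6) is a short corollary of the machinery built for statements (1) and (5)). So, as a proof of the stated theorem, the proposal has a major gap of omission, even though nothing in it is wrong.

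The argument you give for statement (6) is correct, and it is essentially the same idea as the paper's, executed with different bookkeeping. In both cases the discrete increments are classified by size, the energy bound $\limsup_k \RDPM_{n_k}((a,b),u_k)\le 4/3$ forces at most one ``large'' increment, and the aggregate total variation of the remaining increments is shown to vanish. The paper uses $n$-dependent thresholds (discrete derivative larger than $\delta(n_k)^{-1}(\log n_k)^{-4}$ for the family $A_k$ of large increments, and $(\log n_k)^{-1}$ separating the medium family $B_k$ from the small family $C_k$), builds the limit function explicitly from the large jumps, and then observes that for a constant limit the single surviving jump must either have vanishing height or have location converging to $\{a,b\}$, hence leaving every compactly contained $(c,d)$. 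You instead fix a threshold $\epsilon=\eta/4$, quantify the variation of the sub-$\epsilon$ increments through the two-regime estimate on $\psi_k(h)/h$ (the function $h\mapsto\log(1+h^2/\delta^2)/h$ is unimodal on $[\delta,\epsilon]$, so its minimum is at the endpoint $h=\epsilon$ for large $k$, while Cauchy--Schwarz handles the increments below scale $\delta(n_k)$ and gives a contribution of order $\omega(n_k)$), and you close with a contradiction via anchor points $x_k^{\pm}$ chosen by convergence in measure. Your version has the minor advantage of never invoking an $L^\infty$ bound, so it avoids the truncation step the paper borrows from its proof of statement (5); the computations check out ($\psi_k(\epsilon)\to 4/3$, the medium-step variation is at most $\tfrac{3\epsilon}{4}$ times the residual energy, which is $o(1)$ once the big step is accounted for). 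But it buys you only the easiest sixth of the theorem.
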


\begin{proof}
We start with the compactness statement (1), whose proof contains the main ideas also for the proof of the subsequent statements.

\paragraph{\textmd{\textit{Statement (1)}}}
For every $k \in \N$ let us set
\begin{equation*}
A_k:=\left\{z \in \mathcal{Z}_{\delta(n_k)}(a,b) : \left|D^{\delta(n_k)}u_k (z\delta(n_k))\right| > \frac{1}{\delta(n_k) (\log n_k)^4} \right\},\end{equation*}

We observe that
\begin{eqnarray}
\limsup_{k\to+\infty} \RDPM_{n_k}((a,b),u_k)
&\geq& \limsup_{k\to+\infty} \sum_{z \in A_k} \frac{1}{\omega(n_k) ^2 } \log\left(1+ D^{\delta(n_k)} u_k(z\delta(n_k) )^2 \right) \delta(n_k) \nonumber\\
&\geq&\limsup_{k\to+\infty} \sum_{z \in A_k} \frac{1}{\log n_k} \log\left(1+ \frac{n_k ^{4/3}}{ (\log n_k)^{22/3} }\right)\nonumber\\
&=& \limsup_{n\to+\infty} \frac{4}{3} \mathcal{H}^0 (A_k).
\label{est:RDPM_An}
\end{eqnarray}

Since the left-hand side is finite, we deduce that the cardinality of $A_k$ is uniformly bounded and hence there exists a subsequence (not relabelled) such that $\mathcal{H}^0(A_{k})$ is constant, namely $A_{k}$ consists of $m$ integers $z^1 _{k}<\dots<z^m _{k}$.

So let us set $x_k ^i:= \delta(n_k) (z_k ^i+1)$ and $J_k ^i := u_{k}((z^i_k +1)\delta(n_k)) - u_{k}(z^i_k \delta(n_k))$ for every $k\in \N$ and every $i \in \{1,\dots,m\}$. Up to a further subsequence, we can assume that $x_k ^i\to x^i\in [a,b]$ and $J^i _k \to J^i\in \R$ as $k\to +\infty$ for every $i$ and also $u_k(a)\to c\in \R$.

Now we consider the function $u \in S((a,b))$ defined by
\begin{equation}\label{def:u_limit}
u(x)=c+\sum_{i=1} ^m J^i \mathbbm{1}_{[x^i,b)}(x),
\end{equation}
and we claim that $u_{k}\to u$ in $L^p((a,b))$. We point out that some of the values $J^i$ could vanish, some of the points $x^i$ could coincide, while others could be located at the boundary $\{a,b\}$, so the jump set $S_u$ might consists of less than $m$ points. In any case, (\ref{def:u_limit}) is a well-defined function that belongs to $S((a,b))$.

To prove our claim, let us introduce the functions
$$v_k(x)=u_{k}(a)+\sum_{i=1} ^m  J^i_k \mathbbm{1}_{[x_k ^i,b_{\delta(n_k)}^*]}(x) \qquad \forall x \in [a_{\delta(n_k),*},b_{\delta(n_k)}^*].$$

We observe that $v_k \in PC_{\delta(n_k)}(a,b)$ and that $v_k\to u$ in $L^p((a,b))$ because each addendum in the sum converges to the corresponding addendum in (\ref{def:u_limit}).

Therefore it is enough to prove that $\|u_{k}-v_h\|_{L^p((a,b))} \to 0$.

To this end, we introduce the sets
\begin{eqnarray*}
B_k&:=&\left\{z \in \mathcal{Z}_{\delta(n_k)}(a,b) : \frac{1}{\log n_k} \leq \left|D^{\delta(n_k)} u_{k}(z\delta(n_k))\right| \leq \frac{1}{\delta(n_k) (\log n_k)^4} \right\},\\
C_k&:=&\left\{z \in \mathcal{Z}_{\delta(n_k)}(a,b) : \left| D^{\delta(n_k)} u_{k}(z\delta(n_k))\right| < \frac{1}{\log n_k} \right\}.
\end{eqnarray*}

We can estimate the cardinality of $B_k$ in the following way
\begin{eqnarray*}
\RDPM_{n_k}((a,b),u_k)&\geq&\sum_{z \in B_k} \frac{1}{\omega(n_k) ^2 } \log\left(1+ D^{\delta(n_k)} u_k(z\delta(n_k))^2 \right) \delta(n_k) \\
&\geq&\sum_{z \in B_k} \frac{1}{\log n_k} \log\left(1+ \frac{1}{(\log n_k )^2 }\right)\\
&=&\frac{1}{\log n_k} \log\left(1+ \frac{1}{(\log n_k )^2 }\right) \mathcal{H}^0 (B_k).
\end{eqnarray*}

As a consequence we obtain that
\begin{eqnarray}
\sum_{z \in B_k} \left|u_{k}((z+1)\delta(n_k))- u_{k}(z\delta(n_k))\right|
&=& \sum_{z \in B_k} \left| D^{\delta(n_k)} u_k (z\delta(n_k))\right| \delta(n_k) \nonumber\\[1ex]
&\leq &\mathcal{H}^0(B_k)\cdot \frac{1}{\delta(n_k) (\log n_k)^4} \delta(n_k) \nonumber\\[1ex]
&\leq& \frac{\RDPM_{n_k}((a,b),u_k)}{(\log n_k)^3 \cdot \log(1+(\log n_k)^{-2})},\label{estimate_Bk}
\end{eqnarray}
and we observe that the right-hand side goes to zero as $k\to+\infty$.

Moreover, we have that
\begin{eqnarray}
\sum_{z \in C_k} \left|u_{k}((z+1)\delta(n_h))- u_{k}(z\delta(n_k))\right|
&=& \sum_{z \in C_k} \left| D^{\delta(n_k)}u_k (z\delta(n_k))\right| \delta(n_k) \nonumber\\[1ex]
&\leq &\mathcal{H}^0(C_k) \cdot \frac{\delta(n_k)}{\log n_k} \nonumber\\[1ex]
&\leq& \frac{b_{\delta(n_k)}^*-a_{\delta(n_k),*}}{\delta(n_k)}\cdot \frac{\delta(n_k)}{\log n_k},\label{estimate_Ck}
\end{eqnarray}
and again the right-hand side goes to zero as $k\to +\infty$.

Now we observe that
\begin{align*}
u_k(x)-v_k(x)&=\sum_{z \in \mZ_{\delta(n_k)}(a,b)} \left(u_k((z+1)\delta(n_k))- u_k(z\delta(n_k))\right)\mathbbm{1}_{[(z+1)\delta(n_k),b)}(x)\\
&\quad - \sum_{z \in A_k} \left(u_k((z+1)\delta(n_k))-u_k(z\delta(n_k))\right)\mathbbm{1}_{[(z+1)\delta(n_k),b)}(x), &\forall x \in (a,b)
\end{align*}

Therefore we deduce that for every $x \in (a,b)$ we have that
$$|u_k(x)-v_k(x)|\leq \sum_{z \in B_k \cup C_k} |u_k((z+1)\delta(n_k))- u_k(z\delta(n_k))|,$$
and the right-hand side goes to zero as $k\to+\infty$ thanks to (\ref{estimate_Bk}) and (\ref{estimate_Ck}). This implies that actually $u_k-v_k \to 0$ uniformly, and this concludes the proof of the statement (1).

We point out that the argument used in (\ref{est:RDPM_An}) yields
\begin{equation}
\liminf_{k\to+\infty} \RDPM_{n_k}((a,b),u_k) \geq \liminf_{k\to+\infty} \RDPM_{n_k}((a,b),v_k) \geq \frac{4}{3}m \geq \frac{4}{3}\J((a,b),u),\label{est:RDPM_4m}
\end{equation}
for every sequence $\{u_k\}$ such that eventually $\mathcal{H}^0(A_k)=m$.

\paragraph{\textmd{\textit{Statement (2)}}}

We now focus on the liminf inequality (\ref{liminf_RDPM}). Without loss of generality we can assume that the liminf is finite, so we can find a sequence $\{n_k\}$ of integers such that $n_k \to +\infty$ and
$$\liminf_{n\to +\infty} \RDPM_\delta((a,b),u_n)=\lim_{k\to +\infty} \RDPM_{n_k}((a,b),u_{n_k})<+\infty.$$

Now let us set $T:=\|u\|_\infty +1$, and let us consider the functions
$$w_k(x):=\min\{T,\max\{-T,u_{n_k}(x)\}\}.$$

Then $\|w_k\|_\infty$ is uniformly bounded by $T$, and we have that
$$\RDPM_{n_k}((a,b),u_{n_k})\geq \RDPM_{n_k}((a,b),w_k), $$
and that $w_k\to u$ in $L^p((a,b))$, because $|w_k(x)-u(x)|\leq|u_{n_k}(x)-u(x)|$ for every $x \in (a,b)$.

Therefore from statement (1) and (\ref{est:RDPM_4m}) we deduce that, up to the extraction of a subsequence, it holds
$$\lim_{k\to +\infty} \RDPM_{n_k}((a,b),u_{n_k}) \geq \liminf_{k\to +\infty} \RDPM_{n_k}((a,b),w_{k}) \geq \J((a,b),u),$$
which implies (\ref{liminf_RDPM}).

\paragraph{\textmd{\textit{Statement (3)}}}

In order to prove the limsup inequality (\ref{limsup_RDPM}), we first observe that if $u\in S((a,b))$ is constant, a recovery sequence without boundary conditions is just given by $u_n=u$. If $u\in S((a,b))$ is not constant, the limsup inequality is an immediate consequence of the stronger statement~(4), that we prove below.

\paragraph{\textmd{\textit{Statement (4)}}}

Let $a<x^1<\dots<x^m<b$ be the jump points of $u$ and $u^0,u^1,\dots,u^{m}$ be the values of $u$ in the $m+1$ intervals $(a,x^1),(x^1,x^2),\dots,(x^m,b)$. Since $u\in S((a,b))$ is not constant, it has at least one jump point, namely $m\geq 1$.

For $i\in \{1,\dots,m\}$ and $n\geq 2$ let us set $x^i_n :=\delta(n)\lfloor x^i /\delta(n)\rfloor$. Now we consider the functions
$$u_n(x):=\begin{cases}
A_n &\mbox{if }x \in [a_{\delta(n),*},x^1_n),\\
u^{i} &\mbox{if }x \in [x^i_n,x^{i+1}_n) \mbox{ for some }i \in \{1,\dots,m-1\},\\
B_n &\mbox{if }x \in [x^m_n,b_{\delta(n)}^*]. 
\end{cases}$$

We observe that $u_n \in PC_{\delta(n)}(a,b)$ and
$$\int_a ^b |u_n(x)-u(x)|^p\,dx\leq |A_n-u(a)|^p(x^1_n-a)+\sum_{i=1}^m 2\|u\|_\infty ^p (x^i-x^i_n) + |B_n-u(b)|^p (b-x^m _n),$$
hence $u_n \to u$ in $L^p((a,b))$.

Moreover, we have that
\begin{eqnarray*}
\lim_{n \to +\infty}\RDPM_n ((a,b),u_n)&=&\lim_{n \to +\infty} \ \frac{\delta(n)}{\omega(n)^2}\Biggl[\log\left( 1+\frac{(u^1-A_n)^2}{\delta(n)^2}\right)\\
& &\mbox{}+\sum_{i=1} ^{m-2}\log\left( 1+\frac{(u^{i+1}-u^i)^2}{\delta(n)^2}\right)
+\log\left( 1+\frac{(B_n-u^{m-1})^2}{\delta(n)^2}\right)\Biggr]\\
&=& \frac{4}{3}m\\
&=& \frac{4}{3}\J((a,b),u).
\end{eqnarray*}

If $u$ is constant and the boundary conditions $\{A_n\}$ and $\{B_n\}$ are different, we can not approximate $u$ with constant functions. The best we can do is to fix $z_n \in \mZ_{\delta(n)}(a,b)\setminus \{\lfloor a/\delta(n)\rfloor\}$ and consider the functions
$$u_n(x):=\begin{cases}
A_n &\mbox{if }x \in [a_{\delta(n),*},z_n\delta(n)),\\
B_n &\mbox{if }x \in [z_n\delta(n),b_{\delta(n)}^*]. 
\end{cases}$$

What we get is that
$$\limsup_{n \to +\infty}\RDPM_n ((a,b),u_n)=\limsup_{n\to +\infty} \frac{\delta(n)}{\omega(n)^2} \log\left(1+\frac{(B_n- A_n)^2}{\delta(n)^2}\right) \leq \frac{4}{3}.$$

\paragraph{\textmd{\textit{Statement (5)}}}
 Let us set $T=\|u\|_\infty +1$ and
$$w_k(x):=\min\{T,\max\{-T,u_k(x)\}\}.$$

We observe that $\|w_k\|_\infty$ is bounded by $T$, that $w_k\to u$ in $L^p((a,b))$ and that
\begin{eqnarray*}
\frac{4}{3}\J((a,b),u)&=&\lim_{k\to +\infty}\RDPM_{n_k}((a,b),u_k)\\
&\geq& \limsup_{k\to +\infty} \RDPM_{n_k}((a,b),w_k)\\
&\geq& \liminf_{k\to +\infty} \RDPM_{n_k}((a,b),w_k)\\
&\geq& \frac{4}{3}\J((a,b),u),\end{eqnarray*}
where the last inequality follows from statement (2)

Now $\{w_k\}$ fits into the framework of statement (1), so we can repeat the arguments used in the proof of statement (1), with $w_k$ in place of $u_k$ and the additional information that
$$\lim_{n\to +\infty}\RDPM_{n_k}((a,b),w_k)=\frac{4}{3}\J((a,b),u).$$

In particular, since (\ref{est:RDPM_4m}) becomes a chain of equalities, we deduce that for any subsequence $\{k_h\}$ such that $\mathcal{H}^0(A_{k_h})=m$, we actually have $\J((a,b),u)=m$, hence $\mathcal{H}^0 (A_k)=\J((a,b),u)$ eventually for large $k$.

Moreover, from the expression for the limit (\ref{def:u_limit}) and $\J((a,b),u)=m$, we deduce that $a<x^1<\dots<x^m<b$, and that the total variation of $u$ is given by
$$|Du|((a,b))= \sum_{i=1}^m |J^i|.$$

On the other hand the total variation of $w_k$ is
\begin{eqnarray*}
|Dw_k|((a,b))&=&\sum_{z \in \mZ_{\delta(n_k)} (a,b)} \left|w_k((z+1)\delta(n_k))- w_n(z\delta(n_k))\right| \\
&=& \sum_{i=1} ^m |J^i _k| +\sum_{z \in B_k \cup C_k} \left|w_k((z+1)\delta(n_k))- w_k(z\delta(n_k))\right|,
\end{eqnarray*}
where the numbers $J_k ^i$ and the sets $A_k$, $B_k$ and $C_k$ are defined as in the proof of statement~(1), but with $w_k$ in place of $u_k$.

The last sum tends to zero as $k\to +\infty$ thanks to (\ref{estimate_Bk}) and (\ref{estimate_Ck}), hence we have
$$\lim_{k\to+\infty} |Dw_k|((a,b)) =\sum_{i=1}^m |J^i|=|Du|((a,b)),$$
which means that $w_k	\auto u$ strictly in $BV((a,b))$.

Now from the strict convergence of $\{w_k\}$ and Remark~\ref{rmk:strict} we deduce that for every sequence $\{x_k\}\subset (a,b)$ such that $x_k\to x \in [a,b]$ we have that
\begin{equation}
\liminf_{y\to x} u(y)\leq \liminf_{k\to +\infty} w_k(x_k) \leq \limsup_{k\to +\infty} w_k(x_k) \leq \limsup_{y\to x} u(y).\label{liminf-limsup-strict}
\end{equation}

This implies that $|w_k(x)|<T$ for every $x \in [a_{\delta(n_k),*},b_{\delta(n_k)}^*]$ if $k$ is sufficiently large, and hence $w_k=u_k$ eventually. Therefore it holds also that $u_k\auto u$.

Now we prove the second part of statement (5). To this end, let us assume by contradiction that there exists a subsequence (not relabelled) and a sequence of points $\{x_k\}\subset (a,b)$ such that $u_{k}(x_k)\to \gamma$ for some $\gamma\in \R$ that does not belong to the image of $u$.

Up to a further subsequence, we can assume that $x_k\to x \in [a,b]$, so from (\ref{liminf-limsup-strict}) we deduce that
\begin{equation}\label{liminf<gamma<limsup}
\liminf_{y\to x} u(y)<\gamma<\limsup_{y\to x} u(y),
\end{equation}
and in particular $x\in (a,b)$ is one of the jump points of $u$.

Let $\eta>0$ be such that $[x-\eta,x+\eta]\subset (a,b)$ and $x$ is the only jump point of $u$ in $[x-\eta,x+\eta]$, so in particular
\begin{equation}\label{xi_only_jump}
\{u(x-\eta),u(x+\eta)\}=\left\{\liminf_{y\to x} u(y),\limsup_{y\to x} u(y)\right\}.
\end{equation}

Then, at least for $k$ sufficiently large, we know that $A_{k}\cap \mZ_{\delta(n_k)}(x-\eta,x+\eta) =\{z^i_{k}\} $.

Therefore we have that
\begin{align*}
u_{k}(x_k)&= u_{k}(x-\eta) + \sum_{z \in \mZ_{\delta(n_k)}(x-\eta,x+\eta)} \left(u_{k}((z+1)\delta(n_k))-u_{k}(z\delta(n_k))\right) \mathbbm{1}_{[(z+1)\delta(n_k),b_{\delta(n_k)}^*]}(x_k)\\[0.5ex]
&=u_{k}(x-\eta) + \left(u_k((z^i _k +1)\delta(n_k))-u_{k}(z^i _k\delta(n_k))\right) \mathbbm{1}_{[(z^i _k +1)\delta(n_k),b_{\delta(n_k)}^*]}(x_k) \\[0.5ex]
&\quad\mbox{}+ \sum_{z \in \mZ_{\delta(n_k)}(x-\eta,x+\eta)\setminus\{z^i_k\}} \left(u_{k}((z+1)\delta(n_k))-u_{k}(z\delta(n_k))\right) \mathbbm{1}_{[(z+1)\delta(n_k),b_{\delta(n_k)}^*]}(x_k),
\end{align*}
and the last sum goes to zero because of (\ref{estimate_Bk}) and (\ref{estimate_Ck}). Hence the only possible limits for $u_k(x_k)$ are $u(x-\eta)$ and $u(x-\eta)+J^i=u(x+\eta)$.

In any case, this is a contradiction with (\ref{liminf<gamma<limsup}) and (\ref{xi_only_jump}).

\paragraph{\textmd{\textit{Statement (6)}}}

The argument is basically the same used to prove statement (5), but in this case from (\ref{est:RDPM_4m}) we deduce only that eventually $\mathcal{H}^0(A_k) \in\{0,1\}$.

If we consider a subsequence such that $A_k=\emptyset$, then from (\ref{estimate_Bk}) and (\ref{estimate_Ck}) we obtain that the convergence is strict on $(a,b)$.

So let us consider a subsequence such that $\mathcal{H}^0(A_k)=1$. Since $u$ is constant, from (\ref{def:u_limit}) we deduce that either $J^1=0$ or $x^1\in \{a,b\}$.

In the first case, we have again strict convergence on $(a,b)$ thanks to (\ref{estimate_Bk}), (\ref{estimate_Ck}) and $J^1_k \to 0$, while in the second case we have strict convergence on every subinterval $(c,d)$, because eventually $x^i _k \notin (c,d)$.
\end{proof}

\subsection{Minimum problems with fidelity term}

We can now add the fidelity term to the functionals (\ref{def:RDPM_n}), so we obtain the functionals
\begin{equation}\label{def:RDPMF_n}
\RDPMF_n(\beta,f,(a,b),u):=\begin{cases}
\displaystyle{\RDPM_n((a,b),u) + \beta\int_a ^b (u(x)-f(x))^2\,dx} &\mbox{if }u\in PC_{\delta(n)}(a,b),\\
+\infty &\mbox{otherwise}.
\end{cases}
\end{equation}

Since the fidelity term is continuous with respect to the metric of $L^2((a,b))$, we deduce that the $\Gamma$-limit of (\ref{def:RDPMF_n}) with respect to the $L^2$ convergence is the functional
\begin{equation}\label{def:JF}
\JF(\alpha,\beta,f,(a,b),u)=\alpha\J(\Omega,u)+\beta \int_\Omega (u(x)-f(x))^2\,dx,
\end{equation}
with $\alpha=4/3$.

Now we restrict ourselves to the case in which $(a,b)=(0,L)$ for some $L>0$ and $f(x)=Mx$ for some $M\in \R$, and we consider the following minimum problems without boundary conditions
\begin{gather}
\mu_n(\beta,L,M):=\min \left\{\RDPMF_n(\beta,Mx,(0,L),u):u \in PC_{\delta(n)}(0,L)\right\},\label{def:mu_n}\\
\mu(\alpha,\beta,L,M):=\min \left\{ \JF(\alpha,\beta,Mx,(0,L),u):u \in S((0,L))\right\},\label{def:mu}
\end{gather}
and the following minimum problems with boundary conditions
\begin{gather}
\mu_n ^*(\beta,L,M):=\min \left\{\RDPMF_n(\beta,Mx,(0,L),u):u \in PC_{\delta(n)}(0,L),u(0)=0,u(L)=ML\right\},\label{def:mu_n*}\\
\mu^*(\alpha,\beta,L,M):=\min \left\{ \JF(\alpha,\beta,Mx,(0,L),u):u \in S((0,L)),u(0)=0,u(L)=ML\right\}.\label{def:mu*}
\end{gather}

In the following result we list some properties of these minimum problems that we need in the sequel.

\begin{prop}\label{prop:mu_mu*}
The minimum problems in (\ref{def:mu_n}) through (\ref{def:mu*}) have the following properties.
\begin{enumerate}
\renewcommand{\labelenumi}{(\arabic{enumi})}

\item There exist minimizers for (\ref{def:mu_n}), (\ref{def:mu}) and (\ref{def:mu*}) for every choice of the parameters $(\alpha,\beta,L,M) \in \times (0,+\infty)^3 \times \R$ and for every $n\geq 2$. For (\ref{def:mu_n*}) the same holds provided that $L>\delta(n)$.

\item For every admissible choice of $n,\alpha,\beta,L$ the functions
\begin{gather*}
M\mapsto \mu_n(\beta,L,M), \qquad M	\mapsto \mu(\alpha,\beta,L,M),\\
M\mapsto \mu_n ^*(\beta,L,M), \qquad M	\mapsto \mu^*(\alpha,\beta,L,M)
\end{gather*}
are even, continuous in $\R$ and nondecreasing in $[0,+\infty)$.

Moreover, it turns out that
\begin{equation}\label{est:mu_delta*}
\mu_n ^* (\beta,L,M_2)\leq\left(\frac{M_2}{M_1}\right)^2\mu_n ^* (\beta,L,M_1)\end{equation}
for every $M_2\geq M_1 >0$.

\item For every admissible choice of $n,\alpha,\beta,M$ the functions
$$L\mapsto \mu_n(\beta,L,M), \qquad L	\mapsto \mu(\alpha,\beta,L,M), \qquad L	\mapsto \mu^*(\alpha,\beta,L,M)$$
are nondecreasing in $(0,+\infty)$.

As for $\mu_n^*(\beta,L,M)$, it turns out that
\begin{equation}\label{eq:monot_mu_n*}
\mu_{n}^* \left(\beta,L_{\delta(n)}^*, M\right)\leq \mu_{n}^*(\beta,L,M)+\frac{\log(2M^2+2)}{\log n} + \frac{\beta M^2 \delta(n)^3}{3}
\end{equation}
for every $L>\delta(n)$, where $L_{\delta(n)}^*$ is defined according to (\ref{def:a_delta-b_delta}).

\item For every admissible choice of $\beta,L,M$ it turns out that
$$\lim_{n\to +\infty} \mu_n(\beta,L,M)=\mu(4/3,\beta,L,M),\qquad\qquad \lim_{n\to +\infty} \mu_n ^*(\beta,L,M)=\mu^*(4/3,\beta,L,M).$$

\item For every admissible choice of $\beta,L$ it turns out that
\begin{gather*}
\lim_{n\to +\infty}\sup_{|M|\leq M_0} |\mu_n(\beta,L,M)-\mu(4/3,\beta,L,M)|=0 \qquad \forall M_0>0,\\
\lim_{n\to +\infty}\sup_{|M|\leq M_0} |\mu_n ^*(\beta,L,M)-\mu^*(4/3,\beta,L,M)|=0 \qquad \forall M_0>0.
\end{gather*}
\end{enumerate}
\end{prop}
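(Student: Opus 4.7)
The plan is to dispatch the five claims in order, using finite-dimensional compactness, one scaling computation, Theorem~\ref{thm:gamma-convergence}, and a Dini-type monotone convergence argument. For~(1), the discrete problems are minima of continuous coercive functionals over nonempty finite-dimensional spaces: in the Dirichlet case the hypothesis $L > \delta(n)$ guarantees that the constraints $u(0)=0$ and $u(L)=ML$ are mutually compatible within $PC_{\delta(n)}(0,L)$. The two continuum problems are handled by the direct method: on a minimizing sequence the bound $\J \leq \JF/\alpha$ forces the number of jumps to be uniformly bounded and the fidelity to be bounded, so along a subsequence the finitely many jump positions, jump heights and constant values all converge, producing a limit in $S((0,L))$ that attains the infimum and inherits the Dirichlet data when it is prescribed.

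For~(2), parity is immediate from the bijection $u \mapsto -u$ on the admissible sets. Both monotonicity and~(\ref{est:mu_delta*}) follow from the scaling $\tilde u := (M_2/M_1)u$ together with the elementary inequality
\begin{equation*}
\log(1+a^2p^2) \leq a^2\log(1+p^2), \qquad a \geq 1,
\end{equation*}
checked by comparing derivatives in $p$. With $a = M_2/M_1 \geq 1$ this gives $\RDPMF_n(\beta,M_2x,(0,L),\tilde u) \leq (M_2/M_1)^2\,\RDPMF_n(\beta,M_1x,(0,L),u)$, whence~(\ref{est:mu_delta*}); the reverse scaling $\tilde u := (M_1/M_2)u$ (now using that the log is increasing, so shrinking the argument decreases it) yields $\mu_n(M_1) \leq \mu_n(M_2)$, hence monotonicity. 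The same scalings apply to the continuum problems. Continuity in $M$ for the discrete minima is automatic, as they are minima of a jointly continuous coercive functional over a fixed finite-dimensional domain; for the continuum problems it is obtained by pinching the $\limsup$ via the scaling inequality and the $\liminf$ via monotonicity, together with the $u\equiv 0$ competitor at $M=0$ which gives $\mu(M)\leq\beta M^2 L^3/3$.

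For~(3), monotonicity of $\mu_n$ and $\mu$ (without Dirichlet data) is obtained by restriction: the minimizer on $(0,L_2)$ truncated to $(0,L_1)$ is an admissible competitor with nonlarger log and fidelity contributions. For $\mu^*$ I use the horizontal-vertical scaling $u(x) := (L_1/L_2)u^*(xL_2/L_1)$, which sends a minimizer on $(0,L_2)$ to an admissible function on $(0,L_1)$ with the same number of jumps and fidelity scaled by $(L_1/L_2)^3 \leq 1$. The quantitative estimate~(\ref{eq:monot_mu_n*}) for $\mu_n^*$ is obtained by taking a minimizer on $(0,L)$ and overwriting its value on the last cell with $M L_{\delta(n)}^*$: a direct computation bounds the change in fidelity by $\beta M^2\delta(n)^3/3$, while the elementary bound $\log(1+(p+q)^2) \leq \log 2 + \log(1+p^2) + \log(1+q^2)$, applied to the only affected discrete derivative, contributes at most $\delta(n)\log(2M^2+2)/\omega(n)^2 = \log(2M^2+2)/\log n$, using the identity $\delta(n)/\omega(n)^2 = 1/\log n$.

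For~(4) and~(5), I combine Theorem~\ref{thm:gamma-convergence} with equi-coercivity and a monotone Dini argument. A minimizing sequence for $\mu_n$ (resp.\ $\mu_n^*$) has uniformly bounded $\RDPMF_n$, because the constant function $u \equiv 0$ (resp.\ the one-jump function from $0$ to $ML$, whose rescaled log cost is bounded since $\delta(n)\log(1+(ML/\delta(n))^2)/\omega(n)^2=O(1)$) provides a uniform energy ceiling; truncating by $T := |M|L+1$ produces $w_n$ with $\|w_n\|_\infty \leq T$, smaller energy, and, in the Dirichlet case, the same boundary values. Theorem~\ref{thm:gamma-convergence}(1)--(2) then gives an $L^p$ limit $u \in S((0,L))$ together with the $\liminf$ for the leading term, and the fidelity passes by dominated convergence. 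In the Dirichlet case the boundary values are read off from the representation of $u$ built in the proof of Theorem~\ref{thm:gamma-convergence}(1): the initial constant is $c = \lim w_n(0) = 0$, and the sum of the ``big'' jump heights equals $w_n(L) - w_n(0) - o(1) = ML + o(1)$ by~(\ref{estimate_Bk})--(\ref{estimate_Ck}). The $\limsup$ is immediate from Theorem~\ref{thm:gamma-convergence}(3)/(4) applied to a minimizer of the limit problem. Statement~(5) then follows from the standard fact that pointwise convergence of a family of monotone functions to a continuous monotone limit is uniform on compact intervals, applied on $[-M_0,M_0]$ to the four functions of~(2). The step that most requires care is the transfer of the Dirichlet data in the compactness argument just described, since $L^p$ convergence does not detect endpoint values and one must exploit the specific decomposition into ``big'', ``medium'' and ``small'' jumps provided by the proof of Theorem~\ref{thm:gamma-convergence}(1) to recover them.
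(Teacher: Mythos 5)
Most of your proposal follows the paper's own route and is correct where it is complete: the vertical scaling $u\mapsto(M_2/M_1)u$ together with $\log(1+\lambda t)\le\lambda\log(1+t)$ for statement~(2), restriction and the horizontal--vertical rescaling $u(x)=(L_1/L_2)u^*(xL_2/L_1)$ for statement~(3), the ``overwrite the last cell'' competitor for (\ref{eq:monot_mu_n*}) (your inequality $1+(p+q)^2\le 2(1+p^2)(1+q^2)$ yields exactly the paper's bound $\log(2M^2+2)/\log n$, via $\delta(n)/\omega(n)^2=1/\log n$), and the Dini argument for statement~(5) all match the paper, up to cosmetic differences.

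The genuine gap is the transfer of the Dirichlet data to the limit, which you assert twice without proof: in statement~(1) (``the limit \dots inherits the Dirichlet data when it is prescribed'') and in the lower bound of statement~(4) (``the boundary values are read off from the representation\dots''). Neither claim follows from the compactness you invoke. In the representation $u(x)=c+\sum_i J^i\mathbbm{1}_{[x^i,L)}(x)$ from the proof of Theorem~\ref{thm:gamma-convergence}(1), nothing prevents a jump position from converging to an endpoint; if $x^i\to 0$ with $J^i\neq 0$, then $u(0^+)=c+\sum_{\{i:\,x^i=0\}}J^i\neq c$, that jump no longer counts in $\J((0,L),u)$, and the limit violates $u(0)=0$ even though every term of the sequence satisfied it. Knowing $c=0$ and $\sum_i J^i=ML$ does not exclude this, so your limit function need not be admissible (hence need not be a minimizer of the constrained problem, and need not dominate $\mu^*$ in the liminf). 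This is precisely the point the paper flags as ``slightly less trivial'': for the existence of a minimizer of $\mu^*$ it minimizes the relaxed functional $\JF+\alpha\mathbbm{1}_{\R\setminus\{0\}}(u(0))+\alpha\mathbbm{1}_{\R\setminus\{ML\}}(u(L))$, which is lower semicontinuous, and then shows by an explicit competitor (truncation to the boundary values on small intervals near the endpoints) that the relaxed minimizer must in fact satisfy the boundary conditions. For the liminf half of $\mu_n^*\to\mu^*$ the same difficulty can be handled quantitatively: by (\ref{est:RDPM_4m}) one has $\liminf\RDPM_{n_k}\ge\tfrac43 m$ with $m\ge\J((0,L),u)+k_0$, where $k_0$ is the number of jumps that migrate to the boundary, and reinserting each migrated jump at distance $\tau$ from the endpoint restores admissibility at cost $\tfrac43$ per jump plus $O(\tau)$ in fidelity. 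Without one of these arguments your proofs of statement~(1) for $\mu^*$ and of the lower bound in statement~(4) for $\mu_n^*$ are incomplete.
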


\begin{proof}
The arguments are quite standard, and similar to those used in the proof of \cite[Proposition 3.4]{GP:fastPM-CdV}. The main difference is that discrete functions cannot be rescaled horizontally because that would alter the length of the steps. For this reason Statement~(3) requires a bit more work.

\paragraph{\textmd{\textit{Statement (1)}}} The existence of minimizers for $\mu_n$ and $\mu_n ^*$ follows from the coercivity of the fidelity term, because the space $PC_{\delta(n)}(0,L)$ is finite dimensional and the functional is continuous. In the case of $\mu_n ^*$ we also need that $L>\delta(n)$, otherwise the space of functions in $PC_{\delta(n)}(0,L)$ satisfying the boundary conditions is empty.

For $\mu$, it is just a simple application of the direct method in the calculus of variations, since the functional $\JF$ is coercive and lower semicontinuous with respect to the weak BV topology.

The problem is slightly less trivial for $\mu^*$, since the boundary conditions do not pass to the limit. However, it is enough to relax the boundary conditions by considering the following functional without boundary conditions
$$\JF(\alpha,\beta,Mx,(0,L),u)+\alpha\mathbbm{1}_{\R\setminus\{0\}}(u(0))+\alpha\mathbbm{1}_{\R\setminus\{ML\}}(u(L)).$$

The existence of a minimizer $v\in S((0,L))$ follows now from the direct method. Then we can prove that $v$ verifies the boundary conditions by comparing the value of the functional with a competitor $v_\tau$ that is equal to $v$ in $(\tau,L-\tau)$, equal to $0$ in $(0,\tau)$ and to $ML$ in $(L-\tau,\tau)$, where $\tau \in (0,L/2)$. When $\tau$ is small enough, it may be seen that if $v$ does not respect the boundary conditions, then $v_\tau$ contradicts the minimality of $v$.

\paragraph{\textmd{\textit{Statement (2)}}} Symmetry and continuity are trivial. As for the monotonicity, let us fix $M_2>M_1>0$ and let $u_2$ be a minimizer for $\mu_n (\beta,L,M_2)$. Let us set $u_1=(M_1/M_2)u_2$. Then it turns out that
$$\RDPM_n ((0,L),u_1)\leq \RDPM_n ((0,L),u_2)$$
and
$$\int_0 ^L (u_1-M_1x)^2\,dx = \left(\frac{M_1}{M_2}\right)^2 \int_0 ^L (u_2-M_2x)^2\,dx \leq \int_0 ^L (u_2-M_2x)^2\,dx.$$

Therefore we have that
$$\mu_n(\beta,L,M_1)\leq \RDPMF_n(\beta,M_1 x,(0,L),u_1)\leq \RDPMF_n(\beta,M_2 x,(0,L),u_2) =\mu_n(\beta,L,M_2).$$

The same argument works also for the monotonicity of $\mu$, $\mu_n ^*$ and $\mu^*$.

In order to prove (\ref{est:mu_delta*}), we reverse the argument, namely we let $u_1$ be a minimizer for $\mu_n ^*(\beta,L,M_1)$ and we set $u_2=(M_2/M_1)u_1$. It follows that
\begin{eqnarray*}
\RDPM_{n}((0,L),u_2)&=&\frac{1}{\omega(n) ^2}\int_{0} ^{L_{\delta(n)}^*-\delta(n)} \log\left(1+\left(\frac{M_2}{M_1}\right)^2 D^{\delta(n)}u_1(x)^2\right) dx \\
&\leq& \left(\frac{M_2}{M_1}\right)^2 \RDPM_{n}((0,L),u_1),
\end{eqnarray*}
where the inequality follows from the fact that
$$\log(1+\lambda t)\leq \lambda \log(1+t) \qquad \forall \lambda \geq 1 \ \forall t \geq 0.$$

Since the fidelity terms scales as above and $u_2$ is a competitor for the minimum problem $\mu_n ^*(\beta,L,M_2)$, we deduce that
\begin{eqnarray*}
\mu_n ^* (\beta,L,M_2) &\leq& \RDPMF_n(\beta,M_2 x,(0,L),u_2)\\
&\leq& \left(\frac{M_2}{M_1}\right)^2 \RDPMF_n(\beta,M_1 x,(0,L),u_1) \\
&=&\left(\frac{M_2}{M_1}\right)^2 \mu_n ^* (\beta,L,M_1).
\end{eqnarray*}

\paragraph{\textmd{\textit{Statement (3)}}} Let us fix $L_2 \geq L_1>0$. In order to prove the monotonicity of $\mu$ and $\mu_n$ it is enough to consider the restriction to $(0,L_1)$ and $(0,(L_1)_{\delta(n)}^*)$ of the minimizers on $(0,L_2)$ and $(0,(L_2)_{\delta(n)}^*)$.

As for $\mu^*$, since we have to take into account the boundary conditions, we consider the function $u_1(x)=(L_1/L_2)u_2(L_2x/L_1)$, where $u_2$ is a minimizer for $\mu^*(\alpha,\beta,L_2,M)$. Then it turns out that $\J((0,L_1),u_1)=\J((0,L_2),u_2)$, while
\begin{equation}\label{eq:L2scale_u1_u2}
\int_0 ^{L_1} (u_1(x)-Mx)^2\,dx = \left(\frac{L_1}{L_2}\right)^3 \int_0 ^{L_2} (u_2(x)-Mx)^2\,dx,
\end{equation}
hence
$$\mu^*(\alpha,\beta,L_1,M)\leq \JF(\alpha,\beta,Mx,(0,L_1),u_1)\leq \JF(\alpha,\beta,Mx,(0,L_2),u_2) =\mu^*(\alpha,\beta,L_2,M).$$

For $\mu_n ^*$ none of the previous strategies work, because we have to take into account both the boundary conditions and the length of the steps in the definition of the space $PC_{\delta(n)}$.

So we let $u_1\in PC_{\delta(n)}(0,L)$ be a minimizer for $\mu_n ^*(\beta,L,M)$ and we consider the function $u_2 \in PC_{\delta(n)}(0,L)$ that coincides with $u_1$ in $[0,L_{\delta(n)}^*-\delta(n))$, and is equal to $M L_{\delta(n)}^*$ in $[L_{\delta(n)}^*-\delta(n),L_{\delta(n)}^*]$. We observe that $u_2$ is an admissible competitor for the minimum problem $\mu_n ^*(\beta,L_{\delta(n)}^*,M)$, so we deduce that
\begin{align}
\mu_n^*(\beta,L_{\delta(n)}^*,M)&\leq \RDPMF_n(\beta,Mx,(0,L_{\delta(n)}^*),u_2)\nonumber\\
&=\RDPMF_n(\beta,Mx,(0,L),u_1)\nonumber\\
&\quad\mbox{} +\frac{\delta(n)}{\omega(n)^2}\left[ \log\left(1+\frac{(ML_{\delta(n)}^*-A)^2}{\delta(n)^2}\right)-\log\left(1+\frac{(ML-A)^2}{\delta(n)^2}\right)\right]\nonumber\\
&\quad\mbox{} +\beta \int_{L_{\delta(n)}^*-\delta(n)} ^{L_{\delta(n)}^*} (ML_{\delta(n)}^*-Mx)^2\,dx -\beta\int_{L_{\delta(n)}^*-\delta(n)} ^L(ML-Mx)^2\,dx, \label{est:mu_n*-L_delta(n)}
\end{align}
where $A=u_1(L_{\delta(n)}^*-2\delta(n))$.

The second line in the right-hand side is equal to
$$\frac{1}{\log n}\log\left(\frac{\delta(n)^2 + (ML_{\delta(n)}^*-A)^2}{\delta(n)^2 + (ML-A)^2}\right).$$

Since $L_{\delta(n)}^* \leq L+\delta(n)$, we can estimate the previous expression in the following way
\begin{eqnarray*}
\frac{1}{\log n}\log\left(\frac{\delta(n)^2 + (ML_{\delta(n)}^*-A)^2}{\delta(n)^2 + (ML-A)^2}\right)&\leq& \frac{1}{\log n}\log\left(\frac{(2M^2+1)\delta(n)^2 + 2(ML-A)^2}{\delta(n)^2 + (ML-A)^2}\right)\\
&\leq& \frac{1}{\log n} \log(\max\{2M^2+1 , 2\})\\
&\leq& \frac{\log(2M^2+2)}{\log n}.
\end{eqnarray*}

Finally, we can estimate the last line of (\ref{est:mu_n*-L_delta(n)}) simply neglecting the negative addendum, while computing the positive one we obtain
$$\int_{L_{\delta(n)}^*-\delta(n)} ^{L_{\delta(n)}^*} (ML_{\delta(n)}^*-Mx)^2\,dx =\frac{\beta M^2 \delta(n)^3}{3}.$$

Plugging these estimates into (\ref{est:mu_n*-L_delta(n)}) we get exactly (\ref{eq:monot_mu_n*}).




\paragraph{\textmd{\textit{Statement (4)}}} This is a consequence of Theorem~\ref{thm:gamma-convergence}.

\paragraph{\textmd{\textit{Statement (5)}}} The uniformity of the limit on bounded subsets follows from the pointwise convergence and the symmetry, continuity and monotonicity properties in Statement~(2).
\end{proof}

\setcounter{equation}{0}
\section{Convergence of local minimizers}

The aim of this section is to prove that local minimizers of (\ref{def:RDPMF_n}) defined on intervals invading the real line converge to entire local minimizers for the functional (\ref{def:JF}), and to classify the latter.

Before proceeding, we want to make clear what we mean by local minimizers.

\begin{defn}
Let $(a,b)$ be an interval and let us fix a positive integer $n \geq 2$, a positive constant $\beta >0$ and a function $f \in L^2((a,b))$. We say that a function $v \in PC_{\delta(n)}(a,b)$ is a local minimizer for $\RDPMF_n(\beta,f,(a,b),\cdot)$ if
$$\RDPMF_n(\beta,f,(a,b),v)\leq \RDPMF_n(\beta,f,(a,b),u)$$
for every $u \in PC_{\delta(n)}(a,b)$ with the same boundary values of $v$, namely such that $u(a)=v(a)$ and $u(b)=v(b)$.
\end{defn}

\begin{defn}
Let $(a,b)$ be an interval and let us fix two positive constants $\alpha,\beta >0$ and a function $f \in L^2((a,b))$. We say that a function $v \in S((a,b))$ is a local minimizer for $\JF(\alpha,\beta,f,(a,b),\cdot)$ if
$$\JF(\alpha,\beta,f,(a,b),v)\leq \JF(\alpha,\beta,f,(a,b),u)$$
for every $u \in S((a,b))$ with the same boundary values of $v$, namely such that $u(a)=v(a)$ and $u(b)=v(b)$.

We also say that $v\in S_{loc}(\R)$ is an entire local minimizer for $\JF(\alpha,\beta,f,\cdot,\cdot)$ if its restriction to every bounded interval $(a,b)$ is a local minimizer for $\JF(\alpha,\beta,f,(a,b),\cdot)$.
\end{defn}

We can now state the main results of this section. The first one is an estimate on the minimum values $\mu$ and $\mu^*$ defined in (\ref{def:mu}) and (\ref{def:mu*}), which is fundamental in the proof of Theorem~\ref{thm:min}.

\begin{prop}\label{prop:estimate_mu}
For every $\alpha,\beta,L,M\in (0,+\infty)^3\times \R$ it holds that
\begin{align}
\frac{1}{2} \left(\frac{9\alpha^2 \beta M^2}{2}\right)^{1/3}L -2\cdot 6^{2/3}\alpha  &\leq \mu(\alpha,\beta,L,M)\label{eq:estimate_mu_below} \\
&\leq \mu^*(\alpha,\beta,L,M) \leq \frac{1}{2} \left(\frac{9\alpha^2 \beta M^2}{2}\right)^{1/3}L + \frac{3\alpha}{2},\label{eq:estimate_mu_above}
\end{align}
\end{prop}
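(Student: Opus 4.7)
The plan is to reduce both estimates to an elementary one-parameter optimization in the number $N$ of jumps of the competitor, with integer rounding absorbed into the additive constants. The middle inequality $\mu\leq\mu^{*}$ is automatic, since the admissible class for $\mu^{*}$ is a subset of that for $\mu$.

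For the upper bound I would produce an explicit competitor for $\mu^{*}$. Given an integer $N\geq 1$, set $h:=L/N$ and partition $[0,L]$ into the two half-length end-intervals $[0,h/2)$ and $[L-h/2,L]$, together with the $N-1$ internal intervals of length $h$; let $u$ take the values $0,Mh,2Mh,\dots,NMh=ML$ on these $N+1$ pieces, in order. Then $u\in S((0,L))$, it satisfies the prescribed boundary values, and has at most $N$ jumps. The key bookkeeping is that each internal interval is centered at the point where $Mx$ equals the chosen constant, so its contribution to the fidelity is exactly the optimal $M^{2}h^{3}/12$, whereas the two half-intervals at the endpoints contribute $M^{2}h^{3}/24$ each; the total fidelity collapses to $M^{2}L^{3}/(12N^{2})$, giving
\begin{equation*}
\JF(\alpha,\beta,Mx,(0,L),u)\leq \alpha N+\frac{\beta M^{2}L^{3}}{12N^{2}}=:g(N).
\end{equation*}

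For the lower bound I would take an arbitrary $u\in S((0,L))$ with jumps $0<x_{1}<\dots<x_{N}<L$ and constant values $c_{0},\dots,c_{N}$, and set $x_{0}:=0$, $x_{N+1}:=L$, $h_{k}:=x_{k+1}-x_{k}$. The pointwise estimate $\int_{x_{k}}^{x_{k+1}}(c_{k}-Mx)^{2}\,dx\geq M^{2}h_{k}^{3}/12$, saturated when $c_{k}$ is the average of $Mx$ on $[x_{k},x_{k+1}]$, combined with the convexity of $t\mapsto t^{3}$ in the form $\sum_{k=0}^{N}h_{k}^{3}\geq L^{3}/(N+1)^{2}$ (a consequence of $\sum_{k}h_{k}=L$), yields
\begin{equation*}
\JF(\alpha,\beta,Mx,(0,L),u)\geq \alpha N+\frac{\beta M^{2}L^{3}}{12(N+1)^{2}}=g(N+1)-\alpha.
\end{equation*}

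What remains is the minimization of $g$ over positive integers. A direct computation shows that $g$ is convex on $(0,+\infty)$, with unique global minimum at $N_{*}:=L(\beta M^{2}/(6\alpha))^{1/3}$ and value $g(N_{*})=(L/2)(9\alpha^{2}\beta M^{2}/2)^{1/3}$, together with the critical-point identity $\beta M^{2}L^{3}/(12N_{*}^{2})=\alpha N_{*}/2$. For the lower bound, since $g(N+1)\geq g(N_{*})$ for every integer $N\geq 0$, one obtains $\mu\geq g(N_{*})-\alpha\geq g(N_{*})-2\cdot 6^{2/3}\alpha$, using the crude inequality $2\cdot 6^{2/3}>1$. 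For the upper bound I take $N:=\max\{1,\lceil N_{*}\rceil\}$: when $N_{*}\geq 1$ the critical-point identity gives $g(N)\leq\alpha(N_{*}+1)+\alpha N_{*}/2=g(N_{*})+\alpha$; when $N_{*}<1$ the constraint $\beta M^{2}L^{3}<6\alpha$ forces $g(1)=\alpha+\beta M^{2}L^{3}/12<3\alpha/2$, which is $\leq g(N_{*})+3\alpha/2$ since $g(N_{*})\geq 0$. The main obstacle is making the integer rounding of $N$ fit into the additive constants stated; the buffers $3\alpha/2$ and $2\cdot 6^{2/3}\alpha$ are precisely what absorb the degenerate small-$N_{*}$ regime without requiring a visible case distinction in the final inequalities.
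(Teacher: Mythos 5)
Your proof is correct, and the two bounds have different relationships to the paper's argument. The upper bound is essentially the paper's: an explicit staircase competitor with the optimal number of jumps, with integer rounding absorbed into the $3\alpha/2$; the paper rounds by extending the interval to a multiple of $2H$ and invoking monotonicity of $\mu^*$ in $L$, while you adapt the step length $h=L/N$ to the given interval, which is a cosmetic difference. The lower bound, however, is genuinely different and more elementary. The paper proves it only for minimizers: it invokes Lemma~\ref{lemma:prop_minimizers} to locate intersection points $a_0\in(0,2L_0]$ and $b_0\in[L-2L_0,L)$ of the minimizer with the line $Mx$, uses the rigidity of minimizers between consecutive intersections (equally spaced steps), and then optimizes over the number of steps via the inequality $A+B\geq 3(A^2B/4)^{1/3}$; the boundary layers of width $2L_0$ are what produce the constant $2\cdot 6^{2/3}\alpha$. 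You instead bound $\JF(\alpha,\beta,Mx,(0,L),u)$ from below for an \emph{arbitrary} competitor $u\in S((0,L))$, using only the optimal-constant estimate $\int_{x_k}^{x_{k+1}}(c_k-Mx)^2\,dx\geq M^2h_k^3/12$ and the power-mean inequality $\sum_k h_k^3\geq L^3/(N+1)^2$, reducing everything to the convex minimization of $g(N)=\alpha N+\beta M^2L^3/(12N^2)$. This avoids the classification lemma entirely, yields the sharper additive constant $-\alpha$ in place of $-2\cdot 6^{2/3}\alpha$ (which of course still implies the stated inequality), and would apply verbatim to infima rather than minima. What the paper's route buys in exchange is that the same structural lemma is needed anyway for Proposition~\ref{prop:local_minimizers}, so its reuse here costs nothing in context. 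All the individual steps of your argument check out, including the bookkeeping $g(N_*)=\frac{L}{2}(9\alpha^2\beta M^2/2)^{1/3}$ and the treatment of the degenerate regimes $N_*<1$ and $M=0$.
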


The next result is the characterization of entire local minimizers for $\JF(\alpha,\beta,Mx,\cdot,\cdot)$.

\begin{prop}\label{prop:local_minimizers}
Let us fix $(\alpha,\beta,M)\in (0,+\infty)^2\times \R$ and let us consider the canonical $(H,V)$-staircase $S_{H,V}$ with parameters given by
\begin{equation}\label{def:H_V}
H:=\frac{1}{2}\left(\frac{6\alpha}{\beta M^2}\right)^{1/3},\qquad V:=MH,\end{equation}
and the understanding that $S_{H,V}\equiv 0$ when $M=0$.

Then the set of entire local minimizers for the functional $\JF(\alpha,\beta,Mx,\cdot,\cdot)$ coincides with the set $\obl(H,V)$ of oblique translations of $S_{H,V}$, introduced in Definition~\ref{defn:translations}.
\end{prop}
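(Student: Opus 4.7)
The plan is to prove both inclusions separately. The case $M=0$ is immediate: $\obl(H,0)=\{0\}$ coincides with the set of entire local minimizers, since $\JF$ is then nonnegative and vanishes only at $v\equiv 0$. So assume $M\neq 0$ in what follows.

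For \emph{necessity}, let $v$ be an entire local minimizer. I would first rule out constant half-lines: if $v\equiv c$ on $[a,+\infty)$, then on a subinterval $[a',b']\subseteq[a,+\infty)$ I could insert a two-jump plateau at the optimal value $M(a'+b')/2$, saving fidelity $\beta(c-M(a'+b')/2)^{2}(b'-a')$ at a cost of only $2\alpha$; choosing the subinterval with midpoint far from $c/M$ (possible since $M\neq 0$) produces a strict improvement, contradicting local minimality. Hence $v$ has infinitely many jumps $\{x_{i}\}_{i\in\Z}$ with plateau values $\{c_{i}\}$. Variations internal to sufficiently large intervals give the first-order conditions $c_{i}=M(x_{i}+x_{i+1})/2$ (by varying $c_{i}$) and $Mx_{i}=(c_{i-1}+c_{i})/2$ (by varying $x_{i}$; the alternative $c_{i-1}=c_{i}$ would mean $x_{i}$ is not a true jump). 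Combining these shows $\{x_{i}\}$ is an arithmetic progression with some step $2h>0$ and $c_{i}-c_{i-1}\equiv 2Mh$, so $v$ is an oblique translation of the canonical $(h,Mh)$-staircase.

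The \emph{main obstacle} is pinning down $h=H$. A direct computation gives that the energy per unit length of such a staircase equals $\rho(h):=\alpha/(2h)+\beta M^{2}h^{2}/3$, strictly convex in $h\in(0,+\infty)$ with unique minimum at $h=H$ of value $\tfrac{1}{2}(9\alpha^{2}\beta M^{2}/2)^{1/3}$; this is precisely the leading coefficient appearing in Proposition~\ref{prop:estimate_mu}. On a long interval $(a,b)$ of length $L$ the restriction of $v$ has energy $\rho(h)L+O(1)$. If $h\neq H$, I would construct a competitor matching the boundary values $v(a),v(b)$ by taking a near-optimal $H$-configuration (as guaranteed by the upper bound on $\mu^{*}$ in Proposition~\ref{prop:estimate_mu}, after a vertical translation matching the affine part of $v$) together with at most one additional jump near each endpoint to absorb the bounded discrepancy between $v(a),v(b)$ and the natural endpoint values of that configuration. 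Its energy is $\rho(H)L+O(1)$, so for $L$ large the saving $(\rho(h)-\rho(H))L$ dominates the bounded correction cost and contradicts local minimality; hence $h=H$ and $v\in\obl(H,V)$.

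For \emph{sufficiency}, fix $v\in\obl(H,V)$, a bounded interval $(a,b)$, and a competitor $u\in S((a,b))$ with $u(a)=v(a)$, $u(b)=v(b)$. Stratifying by the number $m$ of jumps of $u$, the same first-order analysis uniquely determines the optimal configuration for each $m$; a direct computation using the strict convexity of $\rho$ (which enforces $h=H$ as the unique optimal scale) then shows that the minimum over $m$ of the resulting optimized energies equals $\JF(v)$ and is attained by $v$ itself, so $\JF(u)\geq\JF(v)$.
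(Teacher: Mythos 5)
Your strategy coincides with the paper's: local variations force any entire local minimizer to be an oblique translation of some $(h,Mh)$-staircase, a long-interval energy comparison against a near-optimal $H$-configuration pins down $h=H$ via the strict convexity of the energy density $\rho$, and sufficiency is obtained by solving the boundary-value problem among staircases and optimizing over the number of jumps. The necessity part is sound (your generic-interval comparison with two boundary-matching jumps is a harmless variant of the paper's cleaner choice of the interval $(0,2mh)$, where the boundary values of $v$ already match those of the competitor for $\mu^*$). Two points, however, need repair.

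First, the $M=0$ case is not immediate for the reason you give. That $\JF$ is nonnegative and vanishes only at $v\equiv 0$ shows that the zero function \emph{is} an entire local minimizer, but it does not exclude others: a local minimizer only has to beat competitors with the \emph{same boundary values}, so the zero function is not an admissible competitor against, say, $v\equiv c\neq 0$. You must run the same plateau-insertion comparison you use for $M\neq 0$: on a long interval, the competitor equal to $c$ near the endpoints and to $0$ in between costs about $2\alpha$, against the linearly growing fidelity $\beta c^{2}(b-a)$ of the constant. (This is exactly the paper's argument.) Second, in the sufficiency step your ``direct computation'' is clean only when the endpoints of $(a,b)$ are intersection points of $S_{H,V}$ with the line $Mx$: there the boundary-value problem is exactly $\mu^{*}(\alpha,\beta,2kH,M)$, the optimal $m$-jump configuration is a staircase with equal periods, and the minimization reduces to $\min_{m}\{m\alpha+\tfrac{\alpha}{2}k^{3}/m^{2}\}$, attained at $m=k$. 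For a general interval the optimal $m$-jump configuration has unequal boundary plateaus and the energy formula is messier, so the claim that the optimized energies are minimized by $v$ itself is not a one-line computation. The fix is the standard reduction the paper uses implicitly: it suffices to verify local minimality on a family of intervals invading $\R$ (those with endpoints at intersection points), because a strictly better competitor on a subinterval, extended by $v$ outside it, would be a strictly better competitor on the larger interval. With these two adjustments your proof closes and matches the paper's.
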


Before proving Proposition~\ref{prop:estimate_mu} and Proposition~\ref{prop:local_minimizers}, we state some properties of local minimizers for $\JF(\alpha,\beta,Mx,(a,b),\cdot)$. We do not include a complete proof because the arguments are the same used in \cite[Section~6.2]{GP:fastPM-CdV}, and actually the computations here would be even easier because small perturbations of the position and the height of the jumps do not affect the value of $\J$, but only the fidelity term.

\begin{lemma}\label{lemma:prop_minimizers}
Let us fix $(\alpha,\beta,M)\in (0,+\infty)^2\times \R$ and let $(a,b)$ be an interval. Let $v$ be a local minimizer for the functional $\JF(\alpha,\beta,Mx,(a,b),\cdot)$ and, if $M\neq 0$, let us set
\begin{equation}\label{def:L_0}
L_0:=2\left(\frac{\alpha}{\beta M^2}\right)^{1/3}.
\end{equation}

Then $v$ has the following properties.
\begin{enumerate}
\renewcommand{\labelenumi}{(\arabic{enumi})}

\item If $M\neq 0$ and $b-a>L_0$ then in $(a,b)$ there exists either a jump point $x \in S_v$ or an intersection of the function $v$ with the line $Mx$, namely a point $y \in (a,b)\setminus S_v$ such that $v(y)=My$.

\item If $x\in S_v$ is a jump point of $v$, then $v(x^+)-Mx=Mx-v(x^-)$ and this value has the same sign of $M$ if $M \neq 0$.

\item If $x_1<x_2$ are two consecutive jump points of $v$, namely $v$ has no other jump points in $(x_1,x_2)$, then $v(x)=M(x_1+x_2)/2$ for every $x \in (x_1,x_2)$.

\item If $M\neq 0$ and $y_1<y_2<\dots<y_m$ are the intersection points of the function $v$ with the line $Mx$ in $(a,b)$, then $y_2-y_1= y_3-y_2= \dots=y_m-y_{m-1}$.
\end{enumerate}
\end{lemma}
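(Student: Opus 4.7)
I will treat the four properties in the order (3), (2), (4), (1). The first three are first-order conditions derived from local perturbations that do not alter $\J$ and are automatically admissible (they preserve $v(a)$ and $v(b)$); (1) instead comes from a quantitative two-jump comparison.

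For (3), between two consecutive jumps the function $v$ is constant, say equal to $c$. Replacing $c$ by any real number produces an admissible competitor with the same boundary values and the same number of jumps, and modifies only the fidelity term $\beta\int_{x_1}^{x_2}(c-Mt)^2\,dt$. Minimality forces $c$ to be the $L^2$-projection of $Mt$ onto constants on $(x_1,x_2)$, namely $c=M(x_1+x_2)/2$. For (2), fix a jump $x\in S_v$ with lateral values $v^\pm:=v(x^\pm)$ and move only this jump to $x+\eta$ for $|\eta|$ small; this preserves the boundary values, leaves $\J$ unchanged, and affects only the fidelity through $\beta\int_x^{x+\eta}[(v^--Mt)^2-(v^+-Mt)^2]\,dt$. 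Setting the derivative at $\eta=0$ to zero yields $(v^--Mx)^2=(v^+-Mx)^2$, and since $v^-\neq v^+$ this forces $v^++v^-=2Mx$, equivalently $v(x^+)-Mx=Mx-v(x^-)$. For the sign, combine this identity with (3): if $x_{i-1}<x_i<x_{i+1}$ are three consecutive jumps then $v(x_i^+)-v(x_i^-)=M(x_{i+1}-x_{i-1})/2$, which has the sign of $M$; a symmetric one-sided argument (using the boundary value as the adjacent ``prescribed constant'') covers a first or last jump in $(a,b)$.

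Property (4) is then essentially algebra. The identity $v(x_i^+)+v(x_i^-)=2Mx_i$ together with the values $M(x_{i-1}+x_i)/2$ and $M(x_i+x_{i+1})/2$ coming from (3) gives $x_{i+1}-x_i=x_i-x_{i-1}$, so all interior jumps are equispaced with some step $2H$. The intersection of $v$ with $Mt$ inside $(x_i,x_{i+1})$ is $(x_i+x_{i+1})/2$, hence consecutive interior intersections are $2H$ apart. The intersection, if any, in the first (resp. last) interval is $v(a)/M$ (resp. $v(b)/M$), and using (2) at the corresponding jump shows that it is again at distance $2H$ from the adjacent interior intersection.

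For (1), I argue by contradiction: suppose $v$ has no jump and no intersection with $Mt$ in $(a,b)$. Then $v\equiv c$ with, say, $c>Mt$ for every $t\in(a,b)$; in particular $c\geq Mb$ when $M>0$ and analogously when $M<0$. Consider the competitor $\tilde v_\tau\in S((a,b))$ with two jumps at $a+\tau$ and $b-\tau$, equal to $c$ outside and to $c'=M(a+b)/2$ in between. It is admissible and satisfies $\J((a,b),\tilde v_\tau)=2$. Sending $\tau\to 0^+$, the fidelity of the middle piece tends to $\beta M^2(b-a)^3/12$, so minimality of $v$ yields
\begin{equation*}
\beta\int_a^b(c-Mt)^2\,dt\leq 2\alpha+\frac{\beta M^2(b-a)^3}{12}.
\end{equation*}
In the case $M>0$ the bound $c\geq Mb$ gives $\int_a^b(c-Mt)^2\,dt\geq M^2(b-a)^3/3$, so the above forces $\beta M^2(b-a)^3/4\leq 2\alpha$, that is $b-a\leq L_0$, contradicting the hypothesis. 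The most delicate point is the sign assertion in (2) at boundary-adjacent jumps, but this is not used in the subsequent application of Lemma~\ref{lemma:prop_minimizers} to the classification of entire local minimizers in Proposition~\ref{prop:local_minimizers}, where every jump is interior.
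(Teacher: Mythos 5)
Your argument follows the same route as the paper's: vertical variations of the constant value give (3), horizontal displacement of a single jump gives the identity in (2), (4) is algebra from (2) and (3), and for (1) you use exactly the paper's competitor (two jumps at distance $\tau$ from the endpoints, value $M(a+b)/2$ in between), arriving at $\beta M^2(b-a)^3/4\leq 2\alpha$ and hence the same constant $L_0$. Those parts are correct.

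The one genuine gap is the sign assertion in (2). Your derivation of the sign needs an adjacent jump so that (3) pins down one of the lateral values; it therefore fails for a local minimizer of $\JF(\alpha,\beta,Mx,(a,b),\cdot)$ having a single jump point, where both lateral values are forced by the boundary data and neither is determined by (3). The lemma is stated for arbitrary local minimizers on arbitrary intervals, so this case is part of the claim; observing that it is not used in Proposition~\ref{prop:local_minimizers} does not discharge it (and the sign part of (2) is also invoked in the proof of Proposition~\ref{prop:estimate_mu}, although there every jump does have a neighbouring jump). The repair is already contained in the variation you set up: once the first-order condition $v^++v^-=2Mx$ holds, the energy change caused by moving the jump from $x$ to $x+\eta$ is not merely critical but can be computed exactly, namely
\begin{equation*}
\beta\int_{x}^{x+\eta}\bigl[(v^--Mt)^2-(v^+-Mt)^2\bigr]\,dt
=\beta\,(v^--v^+)\int_{x}^{x+\eta}2M(x-t)\,dt
=\beta M\,(v^+-v^-)\,\eta^2 ,
\end{equation*}
valid for $\eta$ of either sign. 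Local minimality forces this to be nonnegative, hence $M(v^+-v^-)\geq 0$, and since $v^+\neq v^-$ this gives $v^+>Mx>v^-$ when $M>0$ (and the reverse when $M<0$) for every jump, with no case distinction between interior, boundary-adjacent, and isolated jumps. This is what the paper means by proving (2) ``by considering horizontal variations''; with this substitution your proof is complete.
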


\begin{proof}
Statement (2) can be proved exactly as in \cite{GP:fastPM-CdV}, just by considering horizontal variations, while statement (3) follows considering vertical variations. In both cases it is enough to minimize the cost of the fidelity term, since these variations do not affect the number of jump points. Statement (4) follows immediately from statement (2) and statement (3).

Statement (1) can also be proved with the same argument used in \cite{GP:fastPM-CdV}, however we include the proof in order to show the computation of the value $L_0$.


So let us assume by contradiction that $v$ has neither jump points nor intersections with $Mx$ in $(a,b)$. It follows that $v(x)\equiv c$, for some constant $c \in (-\infty,Ma)\cup (Mb,\infty)$. Hence we have that
$$\JF(\alpha,\beta,Mx,(a,b),v)=\beta \int_a ^b (Mx-c)^2\,dx \geq \frac{\beta M^2}{3}(b-a)^3.$$

Now, for $\tau \in (0,(b-a)/2)$ let us consider the function
$$u_\tau(x)=\begin{cases}
c &\mbox{if }x \in (a,a+\tau),\\
(b-a)/2 &\mbox{if }x \in (a+\tau,b-\tau),\\
c &\mbox{if }x \in (b-\tau,b).
\end{cases}$$

Since $v$ is a local minimizer, we know that $\JF(\alpha,\beta,Mx,(a,b),v)\leq \JF(\alpha,\beta,Mx,(a,b),u_\tau)$ for every $\tau$, and in particular
$$\frac{\beta M^2}{3}(b-a)^3\leq \JF(\alpha,\beta,Mx,(a,b),v)\leq \lim_{\tau\to 0^+}\JF(\alpha,\beta,Mx,(a,b),u_\tau)=2\alpha+\frac{\beta M^2}{12}(b-a)^3.$$

It follows that $b-a \leq L_0$.
\end{proof}

We can now prove Proposition~\ref{prop:estimate_mu} and Proposition~\ref{prop:local_minimizers}.

\begin{proof}[Proof of Proposition~\ref{prop:estimate_mu}]
First of all, we observe that both the estimates are trivial when $M=0$, so we can assume that $M\neq 0$.

We start with the estimate from below (\ref{eq:estimate_mu_below}). We observe that the estimate is trivial if $L\leq 4L_0$, where $L_0$ is given by (\ref{def:L_0}), because in this case the left-hand side of (\ref{eq:estimate_mu_below}) is negative, so we can assume that $L>4L_0$.

Let $v$ be a minimizer for $\mu(\alpha,\beta,L,M)$. We claim that there exists two intersections points $a_0\in (0,2L_0]$ and $b_0 \in [L-2L_0,L)$ of the function $v$ with the line $Mx$.

Indeed, from Lemma~\ref{lemma:prop_minimizers} we know that for every positive number $\eta \in (0,L/4-L_0)$ the function $v$ has either a jump point or an intersection with the line $Mx$ in each of the four intervals $(0,L_0+\eta)$, $(L_0+\eta,2L_0+2\eta)$, $(L-2L_0-2\eta,L-L_0-\eta)$ and $(L-L_0-\eta,L)$.

Since statement (2) in Lemma~\ref{lemma:prop_minimizers} implies that between two jump points there is necessarily an intersection with the line $Mx$, we deduce that $v$ has such an intersection in both the intervals $(0,2L_0+2\eta)$ and $(L-2L_0-2\eta,L)$ for every $\eta\in (0,L/4-L_0)$. Since the number of intersections is finite, we deduce that our claim is true.

Now from Lemma~\ref{lemma:prop_minimizers} we know that the interval $(a_{0},b_{0})$ is divided into a finite number $m\geq 1$ of intervals of equal length whose endpoints are intersections. Moreover, $v$ has exactly one jump point in the midpoint between any two consecutive intersection. As a consequence, the shape of $v$ in $(a_{0},b_{0})$ depends only on $m$, and with an elementary computation we find that
\begin{eqnarray*}
\mu(\alpha,\beta,L,M)&=&\JF(\alpha,\beta,Mx,(0,L),v)\\
&\geq& \JF(\alpha,\beta,Mx,(a_0,b_0),v)\\
&=& m \left[\alpha + \frac{\beta M^2}{12}\left(\frac{b_0-a_0}{m}\right)^3\right]
\end{eqnarray*}

Therefore, from the inequality
$$A+B \geq 3\left(\frac{A^2 B}{4}\right)^{1/3}\qquad \forall A,B\geq 0,$$
we deduce that
$$\mu(\alpha,\beta,L,M) \geq 3\left(\frac{\alpha^2 \beta M^2}{48}\right)^{1/3}(b_0-a_0) \geq \frac{1}{2}\left(\frac{9\alpha^2 \beta M^2}{2}\right)^{1/3}(L-4L_0),$$
Substituting the value of $L_0$ we obtain exactly (\ref{eq:estimate_mu_below}).

Now we prove the estimate from above (\ref{eq:estimate_mu_above}). To this end, let $H$ be as in (\ref{def:H_V}) and let us set $m=\lceil L/(2H)\rceil$.

Let also $v\in S((0,2mH))$ be the function that intersects the line $Mx$ in $0,2H,\dots,2mH$ and has jumps in the midpoints of the intervals between two consecutive intersections.

Since $v$ is a competitor for $\mu^*(\alpha,\beta,2mH,M)$, from the monotonicity of $\mu^*$ with respect to $L$ we deduce that
\begin{eqnarray}
\mu^*(\alpha,\beta,L,M) &\leq& \mu^*(\alpha,\beta,2mH,M)\nonumber\\
&\leq & \JF(\alpha,\beta,Mx,(0,2mH),v) \nonumber\\
&=& m \left(\alpha + \frac{2\beta M^2 H^3}{3}\right)\nonumber\\
&\leq& \left(\frac{L}{2H}+1\right)\left(\alpha + \frac{2\beta M^2 H^3}{3}\right).\label{eq:estimate_mu*}
\end{eqnarray}

Substituting the value of $H$ given by (\ref{def:H_V}), we obtain exactly (\ref{eq:estimate_mu_above}).
\end{proof}

\begin{proof}[Proof of Proposition~\ref{prop:local_minimizers}]
Let $v\in S_{loc}(\R)$ be an entire local minimizer for $\JF(\alpha,\beta,Mx,\cdot,\cdot)$.

If $M=0$, from statement (2) of Lemma~\ref{lemma:prop_minimizers} we deduce that $|v|$ must be equal to a constant $c\in \R$. If $c\neq 0$, we see that $v$ can not be a local minimizer on large intervals, because the cost of the fidelity term grows linearly with the length of the interval, while a function which vanishes everywhere but in a small neighborhood of the boundary, where it has two jumps in order to attain the boundary conditions of $v$, has a cost that is only slightly larger than $2\alpha$. It follows that the null function is the unique entire local minimizer for $\JF(\alpha,\beta,Mx,\cdot,\cdot)$ when $M=0$.

If $M\neq 0$, from Lemma~\ref{lemma:prop_minimizers} we know that the set of intersection points of $v$ with the line $Mx$ is discrete, and divides the real line in intervals of the same length $2h>0$, while the set of jump points of $v$ consists of the midpoints of these intervals. This means that $v$ is an oblique translation of the canonical $(h,Mh)$-staircase. We claim that necessarily $h=H$.

Up to an oblique translation, we can assume that the intersection points are of the form $2zh$, with $z \in \Z$. Let us consider the interval $(0,2mh)$, where $m$  is a positive integer. Since $v$ is an entire local minimizer and we have $v(0)=0$ and $v(2mh)=2Mmh$, we deduce that
$$\JF(\alpha,\beta,Mx,(0,2mh),v)=\mu^*(\alpha,\beta,2mh,M).$$

Then by (\ref{eq:estimate_mu*}) with $L=2mh$ we get that
\begin{eqnarray*}
m \left(\alpha + \frac{2\beta M^2 h^3}{3}\right) &=& \JF(\alpha,\beta,Mx,(0,2mh),v)\\
&=& \mu^*(\alpha,\beta,2mh,M)\\
&\leq& \left(\frac{2mh}{2H}+1\right)\left(\alpha + \frac{2\beta M^2 H^3}{3}\right).
\end{eqnarray*}

Dividing by $mh$ and letting $m\to +\infty$ we obtain that
$$\frac{\alpha}{h} + \frac{2\beta M^2 h^2}{3} \leq \frac{\alpha}{H} + \frac{2\beta M^2 H^2}{3}.$$

This implies that $h=H$, because $H$ is the unique minimum point of the function
$$h\mapsto \frac{\alpha}{h} + \frac{2\beta M^2 h^2}{3}.$$

We point out that, at this point, we have only proved that if an entire local minimizer exists, then it is an oblique translation of the canonical $(H,V)$-staircase $S_{H,V}$. We still have to prove that an entire local minimizer exists or, equivalently, that these staircases are actually entire local minimizers.

To this end, it is enough to prove that $S_{H,V}$ is a local minimizer on intervals of the form $(2z_1H,2z_2H)$, where $z_1<z_2$ are integer numbers. In this case, the minimum problem with the boundary data given by $S_{H,V}$ coincides (up to a translation) with $\mu^*(\alpha,\beta,2(z_2-z_1)H,M)$, so we know that a minimizer exists by Proposition~\ref{prop:mu_mu*}. By Lemma~\ref{lemma:prop_minimizers} this minimizer is necessarily a staircase with steps of equal length and height, so the only unknown is the number of jump points $m$. We end up with the following minimum problem
$$\min_{m \in \N} \left\{m\left[\alpha + \frac{2\beta M^2}{3} \left(\frac{2H(z_2-z_1)}{2m}\right)^3\right]\right\}=\min_{m \in \N} \left\{m\alpha + \frac{\alpha}{2} \frac{(z_2-z_1)^3}{m^2}\right\},$$
which is solved by $m=z_2-z_1$, that corresponds to the function $S_{H,V}$.
\end{proof}

The last main result of this section is the convergence of minimizers of $\RDPMF_n$ to minimizers of $\JF$, which is the main tool in the proof of Theorem~\ref{thm:blow-up}.

\begin{prop}\label{prop:convergence_minimizers}
Let us fix $\beta>0$ and $M \in \R$. For every $k\in \N$, let $n_k\geq 2$ be an integer and $A_{k}<B_{k}$ be real numbers, let $g_{k}:(A_{k},B_{k})\to\R$ be a continuous function, and let $w_{k}\in PC_{\delta(n_k)}(A_{k},B_{k})$.

Let us assume that
\begin{enumerate}
\renewcommand{\labelenumi}{(\roman{enumi})}
\item  as $k\to +\infty$ it turns out that $n_k\to +\infty$, $A_{k}\to -\infty$, and $B_{k}\to +\infty$,

\item  $g_{k}(x)\to Mx$ uniformly on bounded subsets of $\R$,

\item  for every $k\in \N$ it turns out that the function $w_k$ is a local minimizer for the functional $\RDPMF_{n_k}(\beta,g_k,(A_k,B_k),\cdot)$,

\item  there exists a constant $C>0$ such that
\begin{equation}
\RDPMF_{n_k}(\beta,g_k,(A_k,B_k),w_k)\leq \frac{C}{\omega(n_k)} \qquad \forall k\geq 1.\label{hp:RDPMF<C/delta}
\end{equation}
\end{enumerate}

Then there exists an increasing sequence $\{k_{h}\} \subset \N$ such that
\begin{equation}
w_{k_{h}}\auto w_{\infty}
\quad\text{locally strictly in }BV\loc(\R),
\nonumber
\end{equation}
where $w_{\infty}$ is an entire local minimizer for the functional $\JF(4/3,\beta,Mx,\cdot,\cdot)$.

Moreover, for every compact set $K\subset \R$ such that $w_\infty ^{-1}(K)=\emptyset$ and every positive number $R>0$, it holds that $w_{k_h} ^{-1}(K)\cap (-R,R)=\emptyset$ when $h$ is sufficiently large.
\end{prop}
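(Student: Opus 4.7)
The plan is to proceed in four stages: first, establish uniform local $L^\infty$ and energy bounds for the sequence $\{w_k\}$; second, apply the compactness result in Theorem~\ref{thm:gamma-convergence}(1) together with a diagonal extraction to obtain a subsequential limit $w_\infty\in S\loc(\R)$; third, upgrade this convergence to local strict convergence in $BV\loc(\R)$; finally, exploit local minimality together with the recovery sequences of Theorem~\ref{thm:gamma-convergence}(4) to show that $w_\infty$ is an entire local minimizer for $\JF(4/3,\beta,Mx,\cdot,\cdot)$. The assertion on image avoidance will then be a direct consequence of Theorem~\ref{thm:gamma-convergence}(5) applied on each $(-R,R)$.

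For the first step, fix $R>0$; by (i) we have $[-R,R]\subset(A_k,B_k)$ for $k$ large and, by (ii), $|g_k(x)|\leq T_R:=|M|R+1$ on $[-R,R]$ for such $k$. I would then replace $w_k$ on a grid-aligned subinterval contained in $(-R,R)$ by its pointwise truncation to $[-T_R,T_R]$, keeping $w_k$ unchanged elsewhere. Such a truncation only decreases (or leaves unchanged) every discrete difference quotient and the fidelity integrand, so the local minimality (iii) forces $|w_k|\leq T_R$ on $(-R,R)$. The uniform local energy bound then follows by comparing $w_k$ on $(-R,R)$ to the step function equal to $w_k(-R)$ on $(-R,0)$ and to $w_k(R)$ on $(0,R)$: its rescaled logarithmic cost is asymptotic to $4/3$ by the computation already carried out in (\ref{est:RDPM_An}), while its fidelity cost is at most $2R(2T_R)^2$.

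With these local bounds in hand, Theorem~\ref{thm:gamma-convergence}(1) applied on each interval $(-\ell,\ell)$, $\ell\in\N$, produces a subsequence $\{w_{k_h}\}$ and a function $w_\infty\in S\loc(\R)$ with $w_{k_h}\to w_\infty$ in $L^p\loc(\R)$. To promote this to locally strict $BV$ convergence on any $(c,d)$ whose endpoints lie outside $S_{w_\infty}$, I would prove the matching upper bound $\limsup\RDPM_{n_{k_h}}((c,d),w_{k_h})\leq(4/3)\J((c,d),w_\infty)$ by contradiction: otherwise, splicing a recovery sequence for $w_\infty|_{(c,d)}$ provided by Theorem~\ref{thm:gamma-convergence}(4), adjusted to match the boundary values of $w_{k_h}$ at $c$ and $d$, into $w_{k_h}$ would contradict local minimality, since the fidelity discrepancy vanishes in the limit. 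Combining this with the liminf inequality of Theorem~\ref{thm:gamma-convergence}(2), I would then invoke statements~(5)–(6) of Theorem~\ref{thm:gamma-convergence} to conclude locally strict convergence and, in particular, $w_{k_h}(c)\to w_\infty(c)$ and $w_{k_h}(d)\to w_\infty(d)$ at every continuity point of $w_\infty$.

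For the last step, fix $(c,d)$ with $c,d\notin S_{w_\infty}$ and any $v\in S((c,d))$ with $v(c)=w_\infty(c)$, $v(d)=w_\infty(d)$. Since $w_{k_h}(c)\to v(c)$ and $w_{k_h}(d)\to v(d)$, Theorem~\ref{thm:gamma-convergence}(4) yields a competitor $v_h\in PC_{\delta(n_{k_h})}(c,d)$ matching the boundary values of $w_{k_h}$ at $c,d$, with $v_h\to v$ in $L^p$ and $\RDPM_{n_{k_h}}((c,d),v_h)\to(4/3)\J((c,d),v)$. Splicing $v_h$ into $w_{k_h}$ on $(c,d)$ gives an admissible global competitor, so (iii) yields $\RDPMF_{n_{k_h}}(\beta,g_{k_h},(c,d),w_{k_h})\leq\RDPMF_{n_{k_h}}(\beta,g_{k_h},(c,d),v_h)$; taking the liminf on the left (via Theorem~\ref{thm:gamma-convergence}(2) and $L^2$-continuity of the fidelity, together with the uniform convergence $g_{k_h}\to Mx$ on $[c,d]$) and the limit on the right delivers $\JF(4/3,\beta,Mx,(c,d),w_\infty)\leq\JF(4/3,\beta,Mx,(c,d),v)$, which is the required local minimality. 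The main obstacle throughout is the first step: extracting genuinely uniform \emph{local} information from the global bound $C/\omega(n_k)$, which itself diverges — the $L^\infty$ truncation is essential, because without it the two-plateau comparison competitor would have uncontrolled logarithmic cost.
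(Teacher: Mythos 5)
Your overall architecture (local bounds, compactness plus diagonal extraction, splicing recovery sequences with matched boundary values to exploit local minimality, then statements~(5)--(6) of Theorem~\ref{thm:gamma-convergence}) coincides with the paper's, but the first step --- the uniform local $L^\infty$ bound --- contains a genuine gap, and it is precisely the step you single out as the main obstacle. You propose to truncate $w_k$ at level $T_R=|M|R+1$ on a grid-aligned subinterval of $(-R,R)$, keeping $w_k$ unchanged elsewhere, and you assert that this ``only decreases every discrete difference quotient''. That is false: at the junction nodes where the truncated part meets the untouched part, the difference quotient can \emph{increase} (if $w_k$ exceeds $T_R$ at an endpoint of the subinterval, truncating the inner values while keeping the outer ones creates a new large jump). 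A truncation decreases all difference quotients only when applied to the whole domain, but then it may alter the values at $A_k$ and $B_k$, so the competitor is no longer admissible for the local minimality in (iii). Equivalently, the truncation is an admissible competitor on a subinterval $(c,d)$ only if $|w_k(c)|$ and $|w_k(d)|$ are already at most the truncation level --- which is exactly what you have not yet proved. Note also that your step never uses hypothesis (iv), which must enter somewhere. The paper's route is a two-stage argument: first use the fidelity term together with (\ref{hp:RDPMF<C/delta}) to find grid points $a_k\in(-L-2,-L-1)$ and $b_k\in(L+1,L+2)$ with $w_k(a_k)^2+w_k(b_k)^2\leq 2C/(\beta\omega(n_k))+4M_L^2$ (a divergent but slowly growing bound, so that the logarithmic cost of jumping from $w_k(a_k)$ down to $0$ remains $O(1)$); compare with the competitor that vanishes between $a_k$ and $b_k$ to obtain a \emph{uniform} energy bound $\Gamma$ on $(a_k,b_k)$; rerun the fidelity argument with $\Gamma$ in place of $C/\omega(n_k)$ to find points $c_k,d_k$ where $w_k$ is uniformly bounded; and only then truncate at the level $T_k=\max\{|w_k(c_k)|,|w_k(d_k)|,\|g_k\|_\infty\}$, which by construction leaves the values at $c_k$ and $d_k$ unchanged.

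A second, smaller gap: in the final step you claim $\JF(4/3,\beta,Mx,(c,d),w_\infty)\leq\JF(4/3,\beta,Mx,(c,d),v)$ for \emph{any} $v$ with matching boundary values, but statement~(4) of Theorem~\ref{thm:gamma-convergence} produces a recovery sequence attaining $\frac{4}{3}\J((c,d),v)$ with prescribed boundary data only for nonconstant $v$; for constant $v$ one only gets the weaker bound $\frac{4}{3}$. When $M\neq0$ this suffices, since by Lemma~\ref{lemma:prop_minimizers} constants cannot be local minimizers on intervals longer than $2L_0$, and one then upgrades to strict convergence by applying the inequality with $v=w_\infty$ (a staircase by Proposition~\ref{prop:local_minimizers}) and invoking statement~(5). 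But when $M=0$ your argument does not show that $w_\infty\equiv0$: one must separately rule out nonzero constants and single-jump limits, which the paper does by comparing with nearly-constant competitors and exploiting the resulting uniform bound $\frac{8}{3}$ on arbitrarily long intervals. Your proposal as written does not address this case, nor does it say how the image-avoidance claim is obtained when $M=0$ (there it follows directly from locally strict convergence to $0$ rather than from statement~(5)).
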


\begin{proof}
Let us fix $L>0$. We claim that $\{w_k\}$ satisfies
\begin{equation}\label{claim:compactness_w_k}
\limsup_{k\to +\infty}\ \bigl\{\RDPM_{n_k}((-L,L),w_k) + \|w_k\|_{L^\infty((-L,L))} \bigr\} <+\infty.\end{equation}

To prove this, let $k_L \in \N$ be such that $(-L-2,L+2)\subset (A_k,B_k)$ for every $k\geq k_L$ and let us set
$$M_L:=\sup_{k\geq k_L} \|g_k\|_{L^\infty(-L-2,L+2)}.$$

For every $k\geq k_L$ we fix $a_k \in \delta(n_k) \mZ_{\delta(n_k)}(-L-2,-L-1)$ and $b_k \in \delta(n_k) \mZ_{\delta(n_k)}(L+1,L+2)$ so that
\begin{gather*}
|w_k(a_k)|=\min\{|w_k(z\delta(n_k))|:z \in \mZ_{\delta(n_k)}(-L-2,-L-1)\},\\
|w_k(b_k)|=\min\{|w_k(z\delta(n_k))|:z \in \mZ_{\delta(n_k)}(L+1,L+2)\}.
\end{gather*}

Then, by (\ref{hp:RDPMF<C/delta}) and the elementary inequality
$$(A-B)^2\geq \frac{A^2}{2}-B^2 \qquad \forall (A,B)\in \R^2,$$
for every $k\geq k_L$ we get that
\begin{eqnarray}
\frac{C}{\omega(n_k)} &\geq& \RDPMF_{n_k}(\beta,g_k,(A_k,B_k),w_k)\nonumber\\
 &\geq& \beta\int_{-L-2} ^{-L-1} (w_k-g_k)^2\,dx + \beta\int_{L+1} ^{L+2} (w_k-g_k)^2\,dx \nonumber\\
&\geq& \beta\int_{-L-2} ^{-L-1} \left(\frac{w_k^2}{2}-g_k^2\right)dx + \beta\int_{L+1} ^{L+2} \left(\frac{w_k^2}{2}-g_k^2\right)dx \nonumber\\
&\geq& \frac{\beta}{2}\left(w_k(a_k)^2 + w_k(b_k)^2\right) - 2\beta M_L^2,\label{argument_ak_bk}
\end{eqnarray}
namely
\begin{equation}
w_k(a_k)^2 + w_k(b_k)^2 \leq \frac{2C}{\beta\omega(n_k)} + 4M_L ^2. \label{eq:est-w(ak)_w(bk)}
\end{equation}

On the other hand, since $w_k$ is a local minimizer, we can estimate its energy from above simply comparing it with the energy of any other function with the same boundary values. To this end, for every $k\geq k_L$ we consider the function $v_k$ that coincides with $w_k$ on $[a_k,a_k +\delta(n_k))$ and $[b_k,b_k+\delta(n_k))$ and vanishes on $[a_k+\delta(n_k),b_k)$.

From the local minimality of $w_k$ we deduce that
\begin{align*}
\RDPMF_{n_k}(\beta,g_k,(a_k,b_k+\delta(n_k)),w_k) &\leq \RDPMF_{n_k}(\beta,g_k,(a_k,b_k+\delta(n_k)),v_k) \\[1ex]
&= \frac{\delta(n_k)}{\omega(n_k)^2}\left[\log\left(1+\frac{w_k(a_k)^2}{\delta(n_k) ^2}\right)+\log\left(1+\frac{w_k(b_k)^2}{\delta(n_k) ^2}\right)\right] \\[0.5ex]
& \quad + \beta\int_{a_{k}} ^{a_{k}+\delta(n_k)}(w_k-g_k)^2\,dx
+\beta\int_{a_{k}+\delta(n_k)} ^{b_{k}} g_k^2 \,dx\\[0.5ex]
&\quad + \beta\int_{b_{k}} ^{b_{k}+\delta(n_k)}(w_k-g_k)^2\,dx\\[1ex]
&\leq\frac{1}{\log n_k}\left[\log\left(1+\frac{w_k(a_k)^2}{\delta(n_k)^2}\right)+\log\left(1+\frac{w_k(b_k)^2}{\delta(n_k) ^2}\right)\right] \\[0.5ex]
&\quad + \beta \delta(n_k)\left(2w_k(a_k)^2+2 M_L^2\right) + \beta M_L^2 (2L+4)\\[0.5ex]
&\quad + \beta \delta(n_k)\left(2w_k(b_k)^2+2 M_L^2\right).
\end{align*}

Hence by (\ref{eq:est-w(ak)_w(bk)}) we obtain that
\begin{eqnarray*}
\RDPMF_{n_k}(\beta,g_k,(a_k,b_k),w_k) &\leq& \frac{2}{\log n_k}\log\left(1+\frac{2C}{\beta \delta(n_k)^2 \omega(n_k)}+\frac{4M_L ^2}{\delta(n_k) ^2}\right)\\
&&\mbox{} + \frac{4C\delta(n_k)}{\omega(n_k)} + \beta M_L^2 \left(12 \delta(n_k) + 2L+4\right),
\end{eqnarray*}
which implies that
\begin{equation}\label{eq:w_k_bounded_energy}
\sup_{k\in \N}\RDPMF_{n_k}(\beta,g_k,(a_k,b_k),w_k):=\Gamma <+\infty.\end{equation}

Now we can repeat the argument used in (\ref{argument_ak_bk}) starting with $c_k\in \delta(n_k) \mZ_{\delta(n_k)}(-L-1,L)$ and $d_k\in \delta(n_k) \mZ_{\delta(n_k)}(L,L+1)$ in place of $a_k$ and $b_k$, and the uniform estimate (\ref{eq:w_k_bounded_energy}) in place of 
(\ref{hp:RDPMF<C/delta}). We obtain that
$$w_k(c_k)^2 + w_k(d_k)^2 \leq \frac{2\Gamma}{\beta} + 4M_L ^2,$$
hence $w_k(c_k)$ and $w_k(d_k)$ are uniformly bounded.

Now we observe that the local minimality of $w_k$ implies that
\begin{equation}\label{eq:w_k_bounded}
|w_k(x)|\leq \max\{|w_k(c_k)|,|w_k(d_k)|,\|g_k\|_{L^\infty((c_k,d_k))}\}=:T_k \qquad \forall x \in [c_k,d_k+\delta(n_k))\quad \forall k\in\N.\end{equation}

Indeed, the function
$$w_k ^{T_k}(x):=\min\{T_k,\max\{w_k(x),-T_k\}\},$$
has the same boundary values of $w_k$ in $(c_k,d_k+\delta(n_k))$, and
$$\RDPMF_{n_k}(\beta,g_k,(c_k,d_k+\delta(n_k)),w_k ^{T_k}) \leq \RDPMF_{n_k}(\beta,g_k,(c_k,d_k+\delta(n_k)),w_k),$$
because both the term with the logarithm and the fidelity term decrease with the truncation. Since the inequality is strict if $w_k ^{T_k}\neq w_k$, by the local minimality of $w_k$ we deduce that necessarily $w_k ^{T_k}=w_k$, hence (\ref{eq:w_k_bounded}) holds true.

Combining (\ref{eq:w_k_bounded_energy}) and (\ref{eq:w_k_bounded}) we obtain (\ref{claim:compactness_w_k}), because the sequence $\{T_k\}\subset (0,+\infty)$ is bounded.

Therefore, we can apply statement~(1) in Theorem~\ref{thm:gamma-convergence} and we deduce that there exists a function $w_\infty \in S((-L,L))$ and a subsequence (not relabeled) such that $w_{k}\to w_\infty$ in $L^2 ((-L,L))$.

Since $L>0$ is arbitrary, with a diagonal argument we can extract a further subsequence in such a way that $w_{k}\to w_\infty$ in $L^2 _{loc}(\R)$ and almost everywhere, where now $w_\infty \in S_{loc}(\R)$.

Now we claim that $w_\infty$ is an entire local minimizer for $\JF(4/3,\beta,Mx,\cdot,\cdot)$ and that the convergence is actually locally strict in $BV_{loc}(\R)$.

To prove this, let $(a,b)\subset\R$ be an interval whose endpoints are not jump points of $w_\infty$ and such that $w_{k}(a)\to w_\infty(a)$ and $w_{k}(b)\to w_\infty(b)$ as $k\to +\infty$. We recall that jump points are at most countably many, and the pointwise convergence holds almost everywhere.

Let $v \in S((a,b))$ be any nonconstant step function with $v(a)=w_\infty(a)$ and $v(b)=w_\infty(b)$.

By statement (4) of Theorem~\ref{thm:gamma-convergence} for every $k\in\N$ we can find a function $v_k\in PC_{\delta(n_{k})}(a,b)$ with the same boundary values of $w_{k}$, namely $v_k(a)=w_{k}(a)$ and $v_k(b)=w_{k}(b)$, and such that $v_k \to v$ in $L^2((a,b))$ and
$$\lim_{k\to +\infty} \RDPM_{n_k}((a,b),v_k)=\frac{4}{3}\J((a,b),v).$$

From Theorem~\ref{thm:gamma-convergence}, the local minimality of $w_{k}$ and assumption (ii) we deduce that
\begin{eqnarray}
\JF(4/3,\beta,Mx,(a,b),w_\infty) &\leq& \liminf_{k\to +\infty} \RDPMF_{n_k}(\beta,g_k,(a,b),w_{k})\nonumber\\
&\leq& \limsup_{k\to +\infty} \RDPMF_{n_k}(\beta,g_k,(a,b),w_{k})\nonumber\\
&\leq& \lim_{k\to +\infty} \RDPMF_{n_k}(\beta,g_k,(a,b),v_{k})\nonumber\\
&=& \JF(4/3,\beta,Mx,(a,b),v).\label{est:w_infty_minimizer}
\end{eqnarray}

If $M\neq 0$, by Lemma~\ref{lemma:prop_minimizers} we know that constant functions can not be local minimizers in any interval $(a,b)$ with $b-a>2L_0$. Hence the fact that (\ref{est:w_infty_minimizer}) holds for every nonconstant $v$ is enough to deduce that $w_\infty$ is a local minimizer for $\JF(4,\beta,Mx,(a,b),w_\infty)$ in $(a,b)$, as soon as $b-a$ is large enough. Since the intervals $(a,b)$ for which this argument works invade the real line, we have that $w_\infty$ is also an entire local minimizer.

At this point, thanks to Proposition~\ref{prop:local_minimizers} we know that $w_\infty$ is a staircase, and in particular it is not constant on large intervals. So we can apply (\ref{est:w_infty_minimizer}) with $v=w_\infty$ and we obtain that $w_{k}$ is a recovery sequence for $w_\infty$ in $(a,b)$. By statement~(5) in Theorem~\ref{thm:gamma-convergence} this implies that $w_{k}\auto w_\infty$ in $BV((a,b))$. As before, since this holds for a family of intervals invading the real line, the convergence is actually locally strict in $BV_{loc}(\R)$.

Now we consider the case $M=0$, so we have to prove that $w_\infty\equiv 0$.

Let us consider again an interval $(a,b)$ whose endpoints are not jump points of $w_\infty$ and such that $w_{k}(a)\to w_\infty(a)$ and $w_{k}(b)\to w_\infty(b)$ as $k\to +\infty$.

If $w_\infty (a)=w_\infty(b)=0$, by statement~(4) of Theorem~\ref{thm:gamma-convergence} for every $k\in\N$ we can find a function $v_k\in PC_{\delta(n_k)}(a,b)$ with the same boundary values of $w_{k}$, namely $v_k(a)=w_{k}(a)$ and $v_k(b)=w_{k}(b)$, and such that $v_k \to 0$ in $L^2((a,b))$ and
$$\limsup_{k\to +\infty} \RDPM_{n_k}((a,b),v_k)\leq \frac{4}{3}.$$

Hence we have that 
\begin{eqnarray*}
\JF(4/3,\beta,0,(a,b),w_\infty) &\leq& \liminf_{k\to +\infty} \RDPMF_{n_k}(\beta,g_k,(a,b),w_{k})\\
&\leq& \limsup_{k\to +\infty} \RDPMF_{n_k}(\beta,g_k,(a,b),w_{k})\\
&\leq& \limsup_{k\to +\infty} \RDPMF_{n_k}(\beta,g_k,(a,b),v_{k})\\
&\leq& \frac{4}{3}.
\end{eqnarray*}

This implies that $w_\infty$ has at most one jump point in $(a,b)$, but since we assumed that $w_\infty (a)=w_\infty(b)=0$, the only possibility is that $w_\infty \equiv 0$ in $(a,b)$. Moreover, from statement~(6) of Theorem~\ref{thm:gamma-convergence} we also get strict convergence on every subinterval $(c,d)$.

Let us assume now that either $w_\infty(a)\neq 0$ or $w_\infty(b)\neq 0$. Then we can apply (\ref{est:w_infty_minimizer}) with the nonconstant function $v_\tau \in S((a,b))$ defined as
$$v(x)=\begin{cases}
w_\infty(a) &\mbox{if }x \in (a,a+\tau),\\
0 &\mbox{if }x \in [a+\tau,b-\tau),\\
w_\infty(b) &\mbox{if }x \in [b-\tau,b),
\end{cases}$$
where $\tau \in (0,(b-a)/2)$. We obtain that
$$\JF(4/3,\beta,0,(a,b),w_\infty) \leq \JF(4/3,\beta,0,(a,b),v)\leq \frac{8}{3} + \beta w_\infty(a)^2 \tau + \beta w_\infty (b)^2 \tau.$$

Letting $\tau \to 0$, we get that
$$\frac{4}{3}\J((a,b),w_\infty)+\beta \int_a ^b w_\infty(x)^2\,dx=\JF(4/3,\beta,0,(a,b),w_\infty) \leq \frac{8}{3}.$$

Since we assumed that $w_\infty$ is not identically zero, this implies that either $w_\infty$ is a constant function, different from zero, or it has exactly one jump point in $(a,b)$.

In the end, we have proved that whichever the boundary values of $w_\infty$ are, $w_\infty$ is either constant or has exactly one jump. Moreover, the value $\JF(4/3,\beta,0,(a,b),w_\infty)$ is bounded by $8/3$ independently of the interval $(a,b)$, provided the endpoints are chosen outside a negligible set. But this is impossible if $w_\infty$ is different from zero on a half-line, hence the only possibility is that $w_\infty \equiv 0$ on $\R$.

At this point we recall that we have proved that if $w_\infty \equiv 0$ on an interval $(a,b)$ then we have strict convergence of $w_{k}$ to $w_\infty$ on every interval $(c,d)$ with $a<c<d<b$. Since we now know that $w_\infty$ vanishes on $\R$, we actually have that $w_{k} \auto w_\infty$ locally strictly in $BV_{loc}(\R)$.

The last part of the statement follows from statement (5) of Theorem~\ref{thm:gamma-convergence} when $M\neq 0$ and directly from the strict convergence when $M=0$.
\end{proof}

\setcounter{equation}{0}
\section{Proofs of the main results}

\subsection{Proof of Theorem~\ref{thm:varifold}}

\subsubsection{Proof of statement (1)}
Since we know that $u_n\to f$ in $L^2(0,1)$, it is enough to show that
$$\limsup_{n\to +\infty} |Du_n|((0,1))\leq |Df|((0,1)).$$

We prove that actually
\begin{equation}\label{eq:TV_un<TV_f}
|Du_n|((0,1))\leq |Df|((0,1)) \qquad \forall n\in \N.
\end{equation}

The idea is rather simple, but a complete formalization would be quite involved, so we try to explain it in words as much as possible. First of all, we observe that (\ref{eq:TV_un<TV_f}) is trivial if $u_n$ is constant. Otherwise, since $u_n$ is a discrete function, we can compute its total variation as a finite sum of increments with alternating sign, namely we can find finitely many integer numbers $0\leq k_1<\dots <k_m\leq n-1$ such that
$$|Du_n|((0,1))=\left|\sum_{i=1} ^{m-1} (-1)^i\left[u_n\left(\frac{k_{i+1}}{n}\right)- u_n\left(\frac{k_{i}}{n}\right)\right]\right|,$$
and that the points $k_i/n$ are alternatively local maxima or local minima of $u_n$ in the discrete sense, namely $u_n(k_i/n)$ is larger (o smaller) than or equal to the two values $u_n((k_i\pm 1)/n)$.

Now, we observe that if $k/n$ is a strict local maximum point (in the discrete sense) for $u_n$, we can find a point $x\in I_{1/n,k}=[k/n,(k+1)/n)$ such that $f(x)\geq u_n(k/n)$, otherwise we could reduce the value of $\DPMF_n(\beta,f,(0,1) ,u_n)$ by lowering the value of $u_n(k/n)$, thus contradicting the minimality of $u_n$. A symmetric statement holds for strict local minima, so we end up with finitely many points $0\leq x_1< \dots < x_m \leq 1$ that we can use to estimate the total variation of $f$ from below, so we obtain (\ref{eq:TV_un<TV_f}).

Unfortunately, there are some complications, because the points $k_i/n$ might not be strict local maxima or minima, so we need to work with the ``maximal intervals of local maximality'' or minimality of $u_n$ in order to find the points $x_i$.

To be more precise, for every $i\in\{1,\dots,m\}$ let $[a_i,b_i)$ be the maximal interval containing $k_i/n$ on which $u_n$ is constant. Of course we have that $a_i<b_i$ for every $i\in\{1,\dots,m\}$ and $b_i\leq a_{i+1}$ for every $i\in\{1,\dots,m-1\}$.



Now, if $[a_i,b_i)$ is an interval of local maximality of $u_n$, namely if
$$u_n(k_i/n)>u_n(a_i-1/n)\qquad \mbox{and} \qquad u_n(k_i/n)>u_n(b_i)$$
(or only one of the two if $[a_i,b_i)$ touches the boundary of $(0,1)$), then
\begin{equation}\label{eq:sup_f_odd}
\sup\{f(x):x\in (a_i,b_i)\}\geq u_n(k_i/n),
\end{equation}
namely for every $\ep>0$ there exists a point $x_i \in (a_i,b_i)$ such that
$$f(x_i)\geq u_n(k_i/n)-\frac{\ep}{2(m-1)}$$

Indeed, if we assume by contradiction that (\ref{eq:sup_f_odd}) does not hold, we can consider the function $v_{n,\tau}\in PC_n(0,1)$ that coincides with $u_n$ outside $[a_i,b_i)$ and is equal to $u_{n}(k_i/n)-\tau$ in $[a_i,b_i)$, and we deduce that
$$\DPMF_n(\beta,f,(0,1),v_{n,\tau}) <\DPMF_n(\beta,f,(0,1),u_n),$$
at least when $\tau>0$ is sufficiently small, because both the terms in the functional decrease when passing from $u_n$ to $v_{n,\tau}$. This contradicts the minimality of $u_n$.

Analogously, if $[a_i,b_i)$ is an interval of local minimality, then for every $\ep>0$ there exists a point $x_i\in (a_i,b_i)$ such that
$$f(x_i)\leq u_n(k_i/n)+\frac{\ep}{2(m-1)}$$

Since the intervals $[a_i,b_i)$ are alternatively intervals of local maximality and minimality we obtain that
\begin{eqnarray*}
|Du_n|((0,1))&=&\left|\sum_{i=1} ^{m-1} (-1)^i\left[u_n\left(\frac{k_{i+1}}{n}\right)- u_n\left(\frac{k_{i}}{n}\right)\right]\right|\\
&\leq&\left|\sum_{i=1} ^{m-1}  (-1)^i [f(x_{i+1}) - f(x_i)] \right| + \ep\\
&\leq& |Df|((0,1)) +\ep.
\end{eqnarray*}

Letting $\ep\to 0$ we obtain (\ref{eq:TV_un<TV_f}).

\color[rgb]{0,0,0}

\subsubsection{Proof of statement (2)}
First of all, we observe that the strict convergence of $\{u_n\}$ to $f$ implies that also $\{\widehat{u}_n\}$ converges strictly to $f$, hence by Remark~\ref{rmk:strict} we deduce that
\begin{equation}\label{th:conv-measure}
D_+\widehat{u}_n=\widehat{u}_n ' \Leb^1 \mres V_n ^+ \stackrel{*}{\weakto} D_+f
\qquad \mbox{and} \qquad
D_-\widehat{u}_n=|\widehat{u}_n '| \Leb^1 \mres V_n ^- \stackrel{*}{\weakto} D_-f,
\end{equation}
where $\Leb^1$ denotes the Lebesgue measure,
\begin{equation*}
V_n ^+:=\{x\in (0,1):\widehat{u}_n '(x)>0\} \qquad\mbox{and} \qquad V_n ^-:=\{x\in (0,1):\widehat{u}_n'(x)<0\}.
\end{equation*}

Moreover, the strict convergence implies that $\{\widehat{u}_n\}$ is uniformly bounded in $C^0([0,1])$, so we can set
$$T:=\sup_{n\in\N} \|\widehat{u}_n\|_\infty <+\infty,$$
and we observe that $\|f\|_\infty\leq T$.

Since $\phi$ is continuous, there exists a constant $M_0>0$ such that
\begin{equation}
|\phi(x,s,\arctan p)|\leq M_{0}
\qquad
\forall(x,s,p)\in[0,1]\times [-T,T]\times\R
\label{est:bound-phi}
\end{equation}

Now for every $a\in(0,1)$ we define the three sets
\begin{gather*}
\Omega_{a}:=\left\{((x,y),(s,t),p)\in[0,1]^2 \times [-T,T]^2 \times\R:|y-x|\leq a,\ |s-t|\leq a,\ |p|\leq a\right\},
\\[1ex]
\Omega_{a}^{+}:=\left\{((x,y),(s,t),p)\in[0,1]^2 \times [-T,T]^2 \times\R:|y-x|\leq a,\ |s-t|\leq a,\ p\geq 1/a\right\},
\\[1ex]
\Omega_{a}^{-}:=\left\{((x,y),(s,t),p)\in[0,1]^2 \times [-T,T]^2 \times\R:|y-x|\leq a,\ |s-t|\leq a,\ p\leq-1/a\right\},
\end{gather*}
and the corresponding three constants
\begin{gather*}
\Gamma_{a}:=\max\left\{|\phi(y,t,\arctan p)-\phi(x,s,0)|:((x,y),(s,t),p)\in \Omega_{a}\right\},
\\[1ex]
\Gamma_{a}^{+}:=\max\left\{|\phi(y,t,\arctan p)-\phi(x,s,\pi/2)|:((x,y),(s,t),p)\in \Omega_{a}^{+}\right\},
\\[1ex]
\Gamma_{a}^{-}:=\max\left\{|\phi(y,t,\arctan p)-\phi(x,s,-\pi/2)|:((x,y),(s,t),p)\in \Omega_{a}^{-}\right\}.
\end{gather*}

We observe that, due to the uniform continuity of $\phi$ in bounded sets, these constants satisfy
\begin{equation*}
\lim_{a\to 0^{+}}\Gamma_{a}=\lim_{a\to 0^{+}}\Gamma_{a}^{+}=\lim_{a\to 0^{+}}\Gamma_{a}^{-}=0.
\end{equation*}

Now for every $a \in (0,1)$ we consider the set $S_a:=\{x \in S_f: |Df|(\{x\})>a/2\}$, and we fix an open neighborhood $\Sigma_a$ of $S_a$ with the following properties:
\begin{itemize}
\item if $S_a$ consists of $m_a$ points, then $\Sigma_a$ is the union of $m_a$ disjoint open intervals $I_1,\dots,I_{m_a}$ such that $\overline{I_i} \cap S_a$ is a single point and $|Df|(\partial I_i)=0$ for every $i\in \{1,\dots,m_a\}$,

\item the following estimates hold
$$\Leb^1(\Sigma_a)\leq a,\qquad |Df|(\Sigma_a)\leq |Df|(S_a)+a.$$
\end{itemize}

From the definition of $S_a$ we also deduce that
$$\ep_a:=\sum_{x \in S_f\setminus S_a}|Df|(\{x\})=|Df|(S_f \setminus S_a)$$
tends to $0$ as $a\to 0^+$.

Now we observe that the strict convergence implies that for every $a\in (0,1)$ there exists a positive integer $n_a\in \N$ such that
\begin{equation}\label{est:Linfty_out_Sigma}
\|\widehat{u}_n - f\|_{L^\infty([0,1]\setminus \Sigma_a)}\leq a \qquad \forall n\geq n_a,\end{equation}
and of course we can choose $n_a$ so that $n_a\to +\infty$ as $a\to 0^+$.

Indeed, if this were false, we could find a sequence $\{x_k\} \subset [0,1]\setminus \Sigma_a$ and a diverging sequence of positive integers $\{n_k\}$ such that
\begin{equation}\label{eq:unk-f>a}
|\widehat{u}_{n_k}(x_k)-f(x_k) |>a \qquad \forall k\in \N.
\end{equation}

Up to the extraction of a subsequence, we can assume that $x_k\to x_\infty \in [0,1]\setminus \Sigma_a$, hence the strict convergence yields
$$\liminf_{x\to x_\infty} f(x) \leq \liminf_{k\to +\infty} \widehat{u}_{n_k}(x_k)\leq \limsup_{k\to +\infty} \widehat{u}_{n_k}(x_k)\leq \limsup_{x\to x_\infty} f(x).$$

Since $x_\infty \notin S_a$, the left-hand side and the right-hand side differ at most by $a/2$, and this contradicts (\ref{eq:unk-f>a}).

Finally, for every $n\in\N$ and every $a\in(0,1)$, we write the interval $[0,1]$ as the disjoint union of the six sets
\begin{gather}
H_{a,n}:=\left\{x\in[0,1]:|\widehat{u}_n'(x)|\leq a\right\},
\label{defn:H-a-n}
\\[1ex]
V_{a,n}^{+}:=\left\{x\in[0,1]\setminus \Sigma_a:\widehat{u}_n'(x)\geq 1/a\right\},
\qquad
V_{a,n}^{-}:=\left\{x\in[0,1]\setminus \Sigma_a:\widehat{u}_n'(x)\leq -1/a\right\},
\label{defn:V-a-n}
\\[1ex]
J_{a,n}^{+}:=\left\{x\in \Sigma_a:\widehat{u}_n'(x)\geq 1/a\right\},
\qquad
J_{a,n}^{-}:=\left\{x\in\Sigma_a:\widehat{u}_n'(x)\leq -1/a\right\},
\label{defn:J-a-n}
\\[1ex]
M_{a,n}:=\left\{x\in[0,1]:a<|\widehat{u}_n'(x)|<1/a\right\},
\label{defn:M-a-n}
\end{gather}
and accordingly we write
\begin{equation*}
\int_{0}^{1}
\phi\left(x,\widehat{u}_{n}(x),\arctan(\widehat{u}_n'(x))\strut\right)\sqrt{1+\widehat{u}_n'(x)^{2}}\,dx=
\mathcal{I}_{a,n}^{H}+\mathcal{I}_{a,n}^{V^+}+\mathcal{I}_{a,n}^{V^-}+\mathcal{I}_{a,n}^{J^+}+\mathcal{I}_{a,n}^{J^-}+\mathcal{I}_{a,n}^{M},
\end{equation*}
where the six terms in the right-hand side are the integrals over the six sets defined above. We observe that
\begin{eqnarray*}
\DPMF_{n}(\beta,f,(0,1),u_{n}) & \geq & 
\int_{0}^{1}\log\left(1+D^{1/n}u_n(x)^{2}\right)\,dx
\\[0.5ex]
&=&\int_{0}^{1}\log\left(1+\widehat{u}_n '(x)^{2}\right)\,dx
\\[0.5ex]
& \geq &
\log\left(1+a^{-2}\right)\left(\Leb^1(V_{a,n}^{+})+\Leb^1(V_{a,n}^{-})+\Leb^1(J_{a,n}^{+})+\Leb^1(J_{a,n}^{-})\right)\\[0.5ex]
& &\mbox{}+\log\left(1+a^{2}\right)\Leb^1(M_{a,n}),
\end{eqnarray*}
and, since (\ref{m_to_0}) implies that the left-hand side tends to~0, we deduce that
\begin{equation*}
\lim_{n\to +\infty}\Leb^1(V_{a,n}^{+})+\Leb^1(V_{a,n}^{-})+\Leb^1(J_{a,n}^{+})+\Leb^1(J_{a,n}^{-})+\Leb^1(M_{a,n})=0
\qquad
\forall a\in(0,1),
\end{equation*}
and as a consequence
\begin{equation*}
\lim_{n\to +\infty}\Leb^1(H_{a,n}^{+})=1
\qquad
\forall a\in(0,1).
\end{equation*}

We claim that for every fixed $a\in(0,1)$ it turns out that
\begin{eqnarray}
\limsup_{n\to +\infty}\left|
\mathcal{I}_{a,n}^{H}-\int_{0}^{1}\phi(x,f(x),0)\,dx\right|
&\leq& M_{0}\left(\sqrt{1+a^{2}}-1+2a\right)+\Gamma_{a},
\label{th:limsup-H}
\\[1ex]
\lim_{n\to +\infty}I_{a,n}^{M}&=&0,
\label{th:limsup-M}
\\[1ex]
\limsup_{n\to +\infty}\left|
\mathcal{I}_{a,n}^{V^+}-\int_0 ^1 \phi(x,f(x),\pi/2)\,d\widetilde{D}_+f(x)\right|
&\leq& 3 \Gamma_{a}^{+} \cdot |Df|((0,1)) +M_{0}(2a+\ep_a),\quad\quad
\label{th:limsup-V+}
\\[1ex]
\limsup_{n\to +\infty}\left|
\mathcal{I}_{a,n}^{V^-}-\int_0 ^1 \phi(x,f(x),-\pi/2)\,d\widetilde{D}_-f(x)\right|
&\leq& 3 \Gamma_{a}^{-} \cdot |Df|((0,1)) +M_{0}(2a+\ep_a),\quad\quad
\label{th:limsup-V-}
\\[1ex]
\limsup_{n\to +\infty}\left|
\mathcal{I}_{a,n}^{J^+}-\sum_{x\in S_{f}^+}\int_{\mathcal{J}_f(x)} \phi(x,s,\pi/2)\,ds\right|
&\leq& 2\Gamma_a ^+ \cdot |Df|((0,1)) + M_0 (3a+\ep_a),
\label{th:limsup-J+}
\\[1ex]
\limsup_{n\to +\infty}\left|
\mathcal{I}_{a,n}^{J^-}-\sum_{x\in S_{f}^-}\int_{\mathcal{J}_f(x)} \phi(x,s,-\pi/2)\,ds\right|
&\leq& 2\Gamma_a ^- \cdot |Df|((0,1)) + M_0 (3a+\ep_a).
\label{th:limsup-J-}
\end{eqnarray}

If we prove these claims, then we let $a\to 0^{+}$ and we obtain (\ref{th:varifold}). 

In words, this means that the integral in the left-hand side of (\ref{th:varifold}) splits into the six integrals over the regions (\ref{defn:H-a-n}), (\ref{defn:V-a-n}), (\ref{defn:J-a-n}), (\ref{defn:M-a-n}), which behave as follows. 
\begin{itemize}

\item The integral over the ``intermediate'' region $M_{a,n}$ disappears in the limit.

\item  The integral over the ``horizontal'' region $H_{a,n}$ tends to the first integral in the right hand side of (\ref{th:varifold}), in which the ``tangent component'' is horizontal.

\item  The integrals over the two ``vertical'' regions far from the jump points $V^{+}_{a,n}$ and $V^{-}_{a,n}$ tend to the two integrals with respect to $\widetilde{D}_+f$ and $\widetilde{D}_-f$ in the right hand side of (\ref{th:varifold}). In this two integrals the ``tangent component'' is vertical.

\item  The integrals over the two ``vertical'' regions around the jump points $J^{+}_{a,n}$ and $J^{-}_{a,n}$ tend to the two sums in the right hand side of (\ref{th:varifold}). In this two sums the ``tangent component'' is also vertical.

\end{itemize}  

\paragraph{\textmd{\textit{Estimate in the intermediate regime}}}

From (\ref{est:bound-phi}) we know that
\begin{equation*}
\left|\phi\left(x,\widehat{u}_{n}(x),\arctan(\widehat{u}_n '(x))\right)\right|\sqrt{1+\widehat{u}_n '(x)^{2}}\leq
M_{0}\sqrt{1+\frac{1}{a^{2}}}
\qquad
\forall x\in M_{a,n},
\end{equation*}
and therefore
\begin{equation*}
|\mathcal{I}_{a,n}^{M}|\leq M_{0}\sqrt{1+\frac{1}{a^{2}}}\cdot \Leb^1(M_{a,n}).
\end{equation*}

Since $\Leb^1(M_{a,n})\to 0$ as $n\to +\infty$, this proves (\ref{th:limsup-M}).

\paragraph{\textmd{\textit{Estimate in the horizontal regime}}}

In order to prove (\ref{th:limsup-H}), we observe that
\begin{eqnarray*}
\mathcal{I}_{a,n}^{H}-\int_{0}^{1}\phi(x,f(x),0)\,dx &=& \int_{H_{a,n}}\phi\left(x,\widehat{u}_{n}(x),\arctan(\widehat{u}_n'(x))\strut\right)\left(\sqrt{1+\widehat{u}_n'(x)^{2}}-1\right)\,dx
\\
&& +\int_{H_{a,n}\setminus \Sigma_a}\left\{\phi\left(x,\widehat{u}_{n}(x),\arctan(\widehat{u}_n'(x))\strut\right)-\phi(x,f(x),0)\right\}\,dx,
\\
&& +\int_{H_{a,n}\cap \Sigma_a}\left\{\phi\left(x,\widehat{u}_{n}(x),\arctan(\widehat{u}_n'(x))\strut\right)-\phi(x,f(x),0)\right\}\,dx,
\\
&& +\int_{H_{a,n}}\phi(x,f(x),0)\,dx-
\int_{0}^{1}\phi(x,f(x),0)\,dx.
\end{eqnarray*}

The absolute value of the first integral in the right-hand side is less than or equal to $M_{0}\left(\sqrt{1+a^{2}}-1\right)$. The absolute value of the second line is less than or equal to $\Gamma_{a}$ provided that
\begin{equation}
|\widehat{u}_{n}(x)-f(x)|\leq a
\qquad
\forall x\in[0,1]\setminus \Sigma_a,
\label{est:uep-f}
\end{equation}
and this happens whenever $n\geq n_a$ thanks to (\ref{est:Linfty_out_Sigma}). The absolute value of the third line is less than $2M_0a$, because $\Leb^1(\Sigma_a)\leq a$. The fourth line tends to~0 because $|H_{a,n}|\to 1$ as $n\to +\infty$. This is enough to establish (\ref{th:limsup-H}).

\paragraph{\textmd{\textit{Estimate in the vertical regime far from jump points}}}

In order to prove (\ref{th:limsup-V+}), we observe that
\begin{eqnarray*}
\lefteqn{\hspace{-4em}
\mathcal{I}_{a,n}^{V^+}-\int_0 ^1 \phi(x,f(x),\pi/2)\,d\widetilde{D}_+f(x) =
}
\\  
\qquad\qquad & = & \int_{V_{a,n}^{+}}\phi\left(x,\widehat{u}_{n}(x),\arctan(\widehat{u}_{n}'(x))\strut\right)\left(\sqrt{1+\widehat{u}_{n}'(x)^{2}}-\widehat{u}_{n}'(x)\right)\,dx
\\[0.5ex]
& & +\int_{V_{a,n}^{+}}\left\{\phi\left(x,\widehat{u}_{n}(x),\arctan(\widehat{u}_{n}'(x))\strut\right)-\phi(x,\widehat{u}_{n_a}(x),\pi/2)\right\}\widehat{u}_{n}'(x)\,dx,
\\[0.5ex]
& & +\int_{V_{a,n}^{+}}\phi(x,\widehat{u}_{n_a}(x),\pi/2)\widehat{u}_{n}'(x)\,dx
-\int_{V_{n}^{+}\setminus \Sigma_a}\phi(x,\widehat{u}_{n_a}(x),\pi/2)\widehat{u}_{n}'(x)\,dx
\\[0.5ex]
& & +\int_{V_{n}^{+}\setminus \Sigma_a}\phi(x,\widehat{u}_{n_a}(x),\pi/2)\widehat{u}_{n}'(x)\,dx
-\int_{[0,1]\setminus\Sigma_a}\phi(x,\widehat{u}_{n_a}(x),\pi/2) \,dD_+f(x)
\\[0.5ex]
& & +\int_{[0,1]\setminus\Sigma_a}\phi(x,\widehat{u}_{n_a}(x),\pi/2)\,d(D_+f-\widetilde{D}_+f)(x)
\\[0.5ex]
& & +\int_{[0,1]\setminus\Sigma_a}\phi(x,\widehat{u}_{n_a}(x),\pi/2)\,d\widetilde{D}_+f(x)
-\int_{[0,1]\setminus\Sigma_a}\phi(x,f(x),\pi/2)\,d\widetilde{D}_+f(x)
\\[0.5ex]
& & +\int_{[0,1]\setminus\Sigma_a}\phi(x,f(x),\pi/2)\,d\widetilde{D}_+f(x)
-\int_0 ^1 \phi(x,f(x),\pi/2)\,d\widetilde{D}_+f(x)
\\[0.5ex]
& =: & L_{1}^V+L_{2}^V+L_{3}^V+L_{4}^V+L_{5}^V+L_{6}^V+L_{7}^V.
\end{eqnarray*}

Let us consider the seven lines separately. The first line can be estimated as
\begin{equation*}
|L_{1}^V|\leq M_{0}\max\left\{\sqrt{1+p^{2}}-p:p\geq 1/a\right\}\Leb^1(V_{a,n}^{+})\leq
M_{0}\cdot\frac{a}{2}\cdot \Leb^1(V_{a,n}^{+}),
\end{equation*}
and this tends to~0 when $n\to +\infty$. The second line can be estimated as
\begin{eqnarray*}
|L_{2}^V|&\leq& \int_{V_{a,n}^{+}}\left\{|\phi\left(x,\widehat{u}_{n}(x),\arctan(\widehat{u}_{n}'(x))\right)-\phi(x,f(x),\pi/2)|\right.\\
&&\qquad\ \left. \mbox{}+|\phi(x,f(x),\pi/2)-\phi(x,\widehat{u}_{n_a}(x),\pi/2)|\strut \right\}\widehat{u}_{n}'(x)\,dx,\\
&\leq& 2\Gamma_{a}^{+}\cdot\int_{0}^{1}|\widehat{u}_{n}'(x)|\,dx
\end{eqnarray*}
thanks to (\ref{est:uep-f}), provided $n\geq n_a$. Hence, from the strict convergence of $\{\widehat{u}_n\}$ we deduce that 
$$\limsup_{n\to +\infty}|L_{2}^V| \leq\Gamma_{2a}^{+}\cdot |Df|((0,1)).$$

For the third line we observe that $V_{a,n}^+$ is a subset of $V_n ^+ \setminus \Sigma_a$ and $(V_{n}^{+}\setminus \Sigma_a)\setminus V_{a,n}^{+}\subseteq H_{a,n}\cup M_{a,n}$, and therefore
\begin{eqnarray*}
|L_{3}^V| & \leq  &
\int_{H_{a,n}}|\phi(x,\widehat{u}_{n_a}(x),\pi/2)|\cdot|\widehat{u}_{n}'(x)|\,dx+
\int_{M_{a,n}}|\phi(x,\widehat{u}_{n_a}(x),\pi/2)|\cdot|\widehat{u}_{n}'(x)|\,dx
\\
& \leq &
M_{0}a+M_{0}\cdot\frac{1}{a}\cdot \Leb^1(M_{a,n}),
\end{eqnarray*}
and this tends to $M_0 a$ when $n\to +\infty$. 

For the fourth line, we observe that $L_{4}^V\to 0$ as $n\to +\infty$ because of (\ref{th:conv-measure}), the fact that $|Df|(\partial \Sigma_a)=0$ and the continuity of $\widehat{u}_{n_a}$.

For the fifth line, we recall that $D_+f-\widetilde{D}_+f=D_+ ^Jf$, hence we have that
$$|L_5 ^V|\leq M_0 \cdot |D_+ ^J f|((0,1)\setminus \Sigma_a)\leq M_0 \ep_a$$

For the sixth line, similarly to the second one, we have that
$$|L_6 ^V|\leq \Gamma_a ^+ \cdot \widetilde{D}_+f((0,1))\leq \Gamma_a ^+ \cdot |Df|((0,1)).$$

Finally, for the seventh line we observe that
$$|L_7 ^V|\leq M_0 \cdot \widetilde{D}_+ f(\Sigma_a) \leq M_0\cdot |Df|(\Sigma_a \setminus S_a) \leq M_0 a.$$

From the previous estimates we conclude that
$$\limsup_{n\to +\infty} |L_{1}^V+L_{2}^V+L_{3}^V+L_{4}^V+L_{5}^V+L_{6}^V+L_{7}^V|\leq 3 \Gamma_{a}^{+} \cdot |Df|((0,1)) +M_{0}(2a+\ep_a),$$
which proves (\ref{th:limsup-V+}). The proof of (\ref{th:limsup-V-}) is analogous.

\paragraph{\textmd{\textit{Estimate in the vertical regime around the jump points}}}

In order to prove (\ref{th:limsup-J+}), let us first split the set $S_a$ into two sets $S_a ^+$ and $S_a ^-$ depending on the sign of the jumps, and let us label the points in $S_a^+$ as follows
$$S_a ^+:=S_a\cap S_{f}^+=\{y_1,\dots, y_{m_a^+}\}.$$

Similarly, we split $\Sigma_a$ into two sets $\Sigma_a ^+$ and $\Sigma_a ^-$ each consisting of the intervals $I_i$ containing points in $S_a ^+$ or $S_a ^-$, and we relabel the intervals $I_i$ in such a way that $\Sigma_a ^+ = I_1 \cup\dots\cup I_{m_a ^+}$, with $\overline{I}_i\cap S_a^+=\{y_i\}$ for every $i\in\{1,\dots,m_a^+\}$.

Now we observe that
\begin{eqnarray*}
\lefteqn{\hspace{-4em}
\mathcal{I}_{a,n} ^{J^+} -\sum_{x\in S_f^+}\int_{\mathcal{J}_f(x)} \phi(x,s,\pi/2)\,ds=}
\\
\qquad\qquad & = & \int_{J_{a,n}^{+}}\phi\left(x,\widehat{u}_{n}(x),\arctan(\widehat{u}_{n}'(x))\strut\right)\left(\sqrt{1+\widehat{u}_{n}'(x)^{2}}-\widehat{u}_{n}'(x)\right)\,dx
\\[0.5ex]
&&+\int_{J_{a,n}^{+}}\left\{\phi\left(x,\widehat{u}_{n}(x),\arctan(\widehat{u}_{n}'(x))\strut\right)-\phi(x,\widehat{u}_{n}(x),\pi/2)\right\}\widehat{u}_n'(x)\,dx
\\[0.5ex]
&&+\int_{J_{a,n}^{+}} \phi(x,\widehat{u}_{n}(x),\pi/2)\widehat{u}_n'(x)\,dx
- \int_{V_n ^+\cap \Sigma_a} \phi(x,\widehat{u}_{n}(x),\pi/2)\widehat{u}_n'(x)\,dx
\\[0.5ex]
&&+\int_{V_n ^+\cap \Sigma_a} \phi(x,\widehat{u}_{n}(x),\pi/2)\widehat{u}_n'(x)\,dx
- \int_{\Sigma_a ^+} \phi(x,\widehat{u}_{n}(x),\pi/2)\widehat{u}_n'(x)\,dx
\\[0.5ex]
&&+\int_{\Sigma_a ^+} \phi(x,\widehat{u}_{n}(x),\pi/2)\widehat{u}_n'(x)\,dx
-\sum_{x\in S_a ^+}\int_{\mathcal{J}_f(x)} \phi(x,s,\pi/2)\,ds
\\[0.5ex]
&&+\sum_{x\in S_a ^+}\int_{\mathcal{J}_f(x)} \phi(x,s,\pi/2)\,ds
-\sum_{x\in S_{f}^+}\int_{\mathcal{J}_f(x)} \phi(x,s,\pi/2)\,ds
\\[0.5ex]
&=:&L_1^J+L_2^J+L_3^J+L_4^J+L_5^J+L_6^J.
\end{eqnarray*}

Let us consider the six lines separately. The first three lines are similar to the first three lines in the previous paragraph, and can be estimated as follows
\begin{gather*}
|L_1^J|\leq M_0 \cdot \frac{a}{2}\cdot \Leb^1(J_{a,n}^+),\\
|L_2^J|\leq \Gamma_a ^+ \int_0 ^1 |\widehat{u}_n'(x)|\,dx,\\
|L_3^J|\leq M_0a + M_0\cdot \frac{1}{a}\cdot \Leb^1(M_{a,n}),
\end{gather*}
hence
$$\limsup_{n\to +\infty} |L_1^J + L_2^J + L_3^J|\leq M_0a + \Gamma_a ^+ \cdot |Df|((0,1)).$$

For the fourth line, we observe that
\begin{eqnarray*}
|L_4^J|&=&\left|\int_{V_n ^+\cap \Sigma_a ^-} \phi(x,\widehat{u}_{n}(x),\pi/2)\widehat{u}_n'(x)\,dx-\int_{V_n ^-\cap \Sigma_a ^+} \phi(x,\widehat{u}_{n}(x),\pi/2)\widehat{u}_n'(x)\,dx\right|\\
&\leq& M_0 \int_{V_n ^+\cap \Sigma_a ^-} |\widehat{u}_n'(x)|\,dx+ M_0\int_{V_n ^-\cap \Sigma_a ^+} |\widehat{u}_n'(x)|\,dx
\end{eqnarray*}

Then, from (\ref{th:conv-measure}) and the properties of $\Sigma_a$, we deduce that
\begin{eqnarray*}
\limsup_{n\to +\infty}|L_4^J|&\leq& M_0 \cdot (D_+f(\Sigma_a ^-)+D_-f(\Sigma_a ^+))\\
&=&M_0 \cdot (D_+f(\Sigma_a ^-\setminus S_a ^-)+D_-f(\Sigma_a ^+\setminus S_a ^+))\\
&\leq& M_0 \cdot |Df|(\Sigma_a \setminus S_a)\\
&\leq& M_0 a.
\end{eqnarray*}


Now let us estimate the sixth line before the fifth one. To this end, it is enough to observe that
$$|L_6 ^J|= \left|\sum_{x \in S_{f}^+\setminus S_a ^+} \int_{\mathcal{J}_f(x)} \phi(x,s,\pi/2)\,ds\right|\leq \sum_{x \in S_{f}^+\setminus S_a ^+} M_0 \cdot |Df|(\{x\}) \leq M_0 \ep_a.$$

Now we have to estimate the fifth line, which requires more work than the others. We observe that
$$L_5^J=\sum_{i=1} ^{m_a ^+} \int_{I_i} \phi(x,\widehat{u}_{n}(x),\pi/2)\widehat{u}_n'(x)\,dx -\int_{\mathcal{J}_f(y_i)} \phi(y_i ,s,\pi/2)\,ds,$$
and that for every $i\in \{1,\dots,m_a ^+\}$ it holds that
$$\left|\int_{I_i} \phi(x,\widehat{u}_{n}(x),\pi/2)\widehat{u}_n'(x)\,dx- \int_{I_i} \phi(y_i,\widehat{u}_{n}(x),\pi/2)\widehat{u}_n'(x)\,dx\right| \leq \Gamma_a ^+ \int_{I_i} |\widehat{u}_n '(x)| \,dx.$$

Therefore we have that
\begin{equation}\label{est:L5J}
|L_5^J|\leq \Gamma_a ^+ \int_{\Sigma_a ^+} |\widehat{u}_n '(x)| \,dx +\mbox{} \sum_{i=1} ^{m_a ^+} \left| \int_{I_i} \phi(y_i,\widehat{u}_{n}(x),\pi/2)\widehat{u}_n'(x)\,dx - \int_{\mathcal{J}_f(y_i)} \phi(y_i ,s,\pi/2)\,ds\right|.
\end{equation}

For $i \in \{1,\dots,m_a^+\}$ we set $I_i =(\alpha_i,\beta_i)$ and we observe that $f(y_i ^-)<f(y_i^+)$, because $Df(\{y_i\})>0$, and hence we have that $\mathcal{J}_f(y_i)=[f(y_i ^-),f(y_i^+)]$.

Then, since $\widehat{u}_n$ is Lipschitz continuous, with a change of variable we obtain that
\begin{multline*}
\int_{I_i} \phi(y_i,\widehat{u}_{n}(x),\pi/2)\widehat{u}_n'(x)\,dx = \int_{\widehat{u}_n(\alpha_i)} ^{\widehat{u}_n(\beta_i)} \phi(y_i,s,\pi/2)\,ds
\\
= \int_{\widehat{u}_n(\alpha_i)} ^{f(y_i ^-)} \phi(y_i,s,\pi/2)\,ds
+\int_{\mathcal{J}_f(y_i)} \phi(y_i,s,\pi/2)\,ds
+\int_{f(y_i^+)} ^{\widehat{u}_n(\beta_i)} \phi(y_i,s,\pi/2)\,ds,
\end{multline*}
and therefore
\begin{eqnarray*}
\lefteqn{\hspace{-5em}
\limsup_{n\to +\infty} \left| \int_{I_i} \phi(y_i,\widehat{u}_{n}(x),\pi/2)\widehat{u}_n'(x)\,dx - \int_{\mathcal{J}_f(y_i)} \phi(y_i,s,\pi/2)\,ds\right| \leq}\\[0.5ex]
\qquad\qquad &\leq&\limsup_{n\to +\infty} \left|\int_{\widehat{u}_n(\alpha_i)} ^{f(y_i ^-)} \phi(y_i,s,\pi/2)\,ds\right| +\left|\int_{f(y_i^+)} ^{\widehat{u}_n(\beta_i)} \phi(y_i,s,\pi/2)\,ds\right|
\\[0.5ex]
&=&\left|\int_{f(\alpha_i )} ^{f(y_i ^-)} \phi(y_i,s,\pi/2)\,ds\right| +\left|\int_{f(y_i^+)} ^{f(\beta_i)} \phi(y_i,s,\pi/2)\,ds\right|
\\[0.5ex]
&\leq& M_0 \cdot |Df|(I_i \setminus \{y_i\}).
\end{eqnarray*}

As a consequence, from (\ref{est:L5J}) and the properties of $\Sigma_a$ we deduce that
$$\limsup_{n\to +\infty} |L_5^J|
\leq \Gamma_a ^+ \cdot |Df|((0,1)) + M_0 \cdot |Df|(\Sigma_a ^+ \setminus S_a ^+)\leq \Gamma_a ^+ \cdot |Df|((0,1)) + M_0 a.$$

Combining the previous estimates we conclude that
$$\limsup_{n\to +\infty} |L_1^J+L_2^J+L_3^J+L_4^J+L_5^J+L_6^J|\leq 2\Gamma_a ^+ \cdot |Df((0,1))| + M_0 (3a+\ep_a),$$
which proves (\ref{th:limsup-J+}). The proof of (\ref{th:limsup-J-}) is analogous.
\qed

\subsection{Proof of Theorem~\ref{thm:min}}

The proof of Theorem~\ref{thm:min} consists of two main parts. In the first part (estimate from below) we consider any sequence $\{u_{n}\}$ of functions such that $u_n\in PC_{1/n} (0,1)$ and we show that
\begin{equation}
\liminf_{n\to +\infty}\frac{\DPMF_{n}(\beta,f,(0,1),u_{n})}{\omega(n) ^{2}}\geq
\beta^{1/3} \int_{0}^{1}|f'(x)|^{2/3}\,dx.
\label{est:lim-DPMF-below}
\end{equation}

In the second part (estimate from above) we construct a sequence $\{u_{n}\}$ of functions such that $u_n\in PC_{1/n} (0,1)$ and
\begin{equation}
\limsup_{n\to +\infty}\frac{\DPMF_{n}(\beta,f,(0,1),u_{n})}{\omega(n)^{2}}\leq
\beta^{1/3}\int_{0}^{1}|f'(x)|^{2/3}\,dx.
\label{est:lim-DPMF-above}
\end{equation}

\subsubsection{Estimate from below}

\paragraph{\textmd{\textit{Interval subdivision and approximation of the forcing term}}}

Let us fix two real numbers $L>0$ and $\eta \in (0,1)$. For every integer $n\geq 2$ we set
\begin{equation}
L_{n}:=\frac{\lceil L n\omega(n)\rceil}{n\omega(n)}=\delta(n) \left\lceil \frac{L}{\delta(n)} \right\rceil,
\qquad
N_{n,L}:=\left\lfloor\frac{1}{L_n\omega(n)}\right\rfloor,
\qquad
A_{n,L}:= L_n \omega(n) N_{n,L}.
\label{def:LNA}
\end{equation}

We observe that $N_{n,L}$ is an integer, and that $L_{n}\to L^+$ and $A_{n,L}\to 1^-$ when $n\to +\infty$. We observe also that $[0,A_{n,L})$ is the disjoint union of the $N_{n,L}$ intervals of length $L_{n}\omega(n)$ defined by
\begin{equation}
I_{n,k}:=[(k-1)L_{n}\omega(n),kL_{n}\omega(n))
\qquad
\forall k\in\{1,\ldots,N_{n,L}\},
\label{defn:Ink}
\end{equation}
and we consider  the piecewise affine function $f_{n,L}:[0,A_{n,L})\to\R$ that interpolates the values of $f$ at the endpoints of these intervals, namely the function defined by
\begin{equation}
f_{n,L}(x):=M_{n,L,k}(x-(k-1)L_{n}\omega(n))+f((k-1)L_{n}\omega(n))
\qquad
\forall x\in I_{n,k},
\label{defn:fnL}
\end{equation}
where
\begin{equation}
M_{n,L,k}:=\frac{f(kL_{n}\omega(n))-f((k-1)L_{n}\omega(n))}{L_{n}\omega(n)}=\frac{1}{L_n \omega(n)} \int_{I_{n,k}} f'(y)\,dy.
\nonumber
\end{equation}

From the $H^{1}$ regularity of $f$ we deduce that the sequence $\{f_{n,L}\}$ converges to $f$ in the sense that
$$\lim_{n\to +\infty} \int_0 ^{A_{n,L}} (f_{n,L}'(x) - f'(x))^2 =0.$$
and
\begin{equation}
\lim_{n\to +\infty}\frac{1}{\omega(n)^{2}}\int_{0}^{A_{n,L}}(f_{n,L}(x)-f(x))^{2}\,dx=0.
\label{lim:fnLtof}
\end{equation}

In particular, we deduce that
\begin{equation}
\lim_{n\to +\infty} L_{n}\omega(n)\sum_{k=1}^{N_{n,L}}\phi(M_{n,L,k})=
\lim_{n\to +\infty} \int_{0}^{A_{n,L}}\phi(f_{n,L}'(x))\,dx=
\int_{0}^{1}\phi(f'(x))\,dx,
\label{lim:fnL'tof'}
\end{equation}
for every continuous function $\phi:\R\to \R$ that grows at most quadratically, namely such that
$$|\phi(s)|\leq C_1 + C_2 s^2 \qquad \forall s \in \R,$$
for some positive constants $C_1,C_2>0$.

Finally, from the inequality
\begin{equation}
(a+b)^{2}\geq(1-\eta)a^{2}+\left(1-\frac{1}{\eta}\right)b^{2}
\qquad
\forall\eta\in(0,1),
\quad
\forall (a,b)\in\re^{2},
\nonumber
\end{equation}
we obtain the estimate
\begin{equation}
\int_{0}^{1}(u_{n}-f)^{2}\,dx\geq
(1-\eta)\int_{0}^{A_{n,L}}(u_{n}-f_{n,L})^{2}\,dx
+\left(1-\frac{1}{\eta}\right)\int_{0}^{A_{n,L}}(f-f_{n,L})^{2}\,dx,
\nonumber
\end{equation}
from which we conclude that
\begin{eqnarray}
\DPMF_{n}(\beta,f,(0,1),u_{n})& \geq & (1-\eta)\DPMF_{n}(\beta,f_{n,L},(0,A_{n,L}),u_{n})
\nonumber
\\[0.5ex]
&  & 
\mbox{}+\left(1-\frac{1}{\eta}\right)\beta\int_{0}^{A_{n,L}}(f(x)-f_{n,L}(x))^{2}\,dx.
\label{est:DPMF-eta}
\end{eqnarray}

\paragraph{\textmd{\textit{Reduction to a common interval}}}

We prove that
\begin{equation}
\DPMF_{n}(\beta,f_{n,L},(0,A_{n,L}),u_{n})\geq
\omega(n)^{3}\sum_{k=1}^{N_{n,L}}\mu_{n}(\beta,L,M_{n,L,k}),
\label{ineq:DPMF-mLM}
\end{equation}
where $\mu_{n}$ is defined by (\ref{def:mu_n}). To this end, we begin by observing that
\begin{equation}
\DPMF_{n}(\beta,f_{n,L},(0,A_{n,L}),u_{n})\geq
\sum_{k=1}^{N_{n,L}}\DPMF_{n}(\beta,f_{n,L},I_{n,k},u_{n}),
\label{ineq:DPMF-sum}
\end{equation}
because the endpoints of the intervals $I_{n,k}$ are multiples of $1/n$, so passing from the left-hand side to the right-hand side we are just reducing the domain of integration, by neglecting the contribute of the discrete derivative in the last step of each interval $I_{n,k}$.

Each term in the sum can be reduced to the common interval $(0,L_{n})$ by introducing the function $v_{n,L,k}\in PC_{\delta(n)}(0,L_{n})$ defined by
\begin{equation}
v_{n,L,k}(y):=\frac{u_n((k-1)L_{n}\omega(n)+\omega(n) y)-f((k-1)L_{n}\omega(n))}{\omega(n)}
\qquad
\forall y\in(0,L_{n}).
\label{defn:vnLK}
\end{equation}

Indeed, with the change of variable $x=(k-1)L_{n}\omega(n)+\omega(n) y$, we obtain that
\begin{equation}
\int_{I_{n,k}}(u_n(x)-f_{n,L}(x))^{2}\,dx=
\omega(n)^{3}\int_{0}^{L_{n}}(v_{n,L,k}(y)-M_{n,L,k}\,y)^{2}\,dy
\nonumber
\end{equation}
and
\begin{equation}
\int_{(k-1)L_n\omega(n)} ^{kL_n\omega(n) - 1/n}
\log\left(1+D^{1/n}u_n(x)^{2}\right)dx
=\omega(n)^{3}\RDPM_{n}((0,L_{n}),v_{n,L,k}),
\nonumber
\end{equation}
and therefore we have that
\begin{eqnarray*}
\DPMF_{n}(\beta,f_{n,L},I_{n,k},u_{n}) & = &
\omega(n)^{3}\RDPMF_{n}(\beta,M_{n,L,k}\,x,(0,L_{n}),v_{n,L,k})
\\[1ex]
& \geq & \omega(n)^{3}\mu_{n}(\beta,L_{n},M_{n,L,k})
\\[1ex]
& \geq & \omega(n)^{3}\mu_{n}(\beta,L,M_{n,L,k}),
\end{eqnarray*}
where in the last inequality we exploited that $L_{n}\geq L$ for every $n$, and that $\mu_{n}$ is monotone with respect to the length of the interval. Plugging this inequality into (\ref{ineq:DPMF-sum}) we obtain (\ref{ineq:DPMF-mLM}).

\paragraph{\textmd{\textit{Convergence to minima of the limit problem}}}

We observe that for every positive real number $M_0>0$ there exists $n_0\in\N$ such that
\begin{eqnarray*}
\mu_{n}(\beta,L,M_{n,L,k})&\geq&\mu_{n}(\beta,L,\min\{|M_{n,L,k}|, M_0\})\nonumber\\
&\geq&\mu(4/3,\beta,L,\min\{|M_{n,L,k}|,M_0\})-\eta
\end{eqnarray*}
for every $n\geq n_0$ and every $k\in\{1,\ldots,N_{n,L}\}$, where $\mu$ is defined according to (\ref{def:mu}). Indeed, this estimate follows from Proposition~\ref{prop:mu_mu*}, and in particular from the monotonicity of $\mu_{n}$ with respect to $M$ and the uniform convergence in statement~(5).

Then, from the estimate from below in (\ref{eq:estimate_mu_below}) we obtain that
\begin{equation}\label{th:mLM-mu}
\mu_{n}(\beta,L,M_{n,L,k})\geq
\beta^{1/3}\min\{|M_{n,L,k}|,M_0\}^{2/3}L-\left(\frac{8}{3}\cdot 6^{2/3}+\eta\right).
\end{equation}

\paragraph{\textmd{\textit{Conclusion}}}

Summing over $k$, from (\ref{ineq:DPMF-mLM}) and (\ref{th:mLM-mu}) we obtain that
\begin{multline*}
\frac{\DPMF_{n}(\beta,f_{n,L},(0,A_{n,L}),u_{n})}{\omega(n)^{2}} \;\geq\;  
\omega(n)\sum_{k=1}^{N_{n,L}}\mu_{n}(\beta,L,M_{n,L,k})
\\
 \geq\, \beta^{1/3} L \omega(n) \sum_{k=1}^{N_{n,L}}\min\{|M_{\ep,L,k}|,M_0\}^{2/3}
-\left(\frac{8}{3}\cdot 6^{2/3} +\eta\right) \omega(n) N_{n,L}.
\end{multline*}

Finally, plugging this estimate into (\ref{est:DPMF-eta}) we deduce that
\begin{eqnarray*}
\frac{\DPMF_{n}(\beta,f,(0,1),u_{n})}{\omega(n)^{2}} & \geq &
(1-\eta) \beta^{1/3} \frac{L}{L_{n}}\cdot L_{n}\omega(n)\sum_{k=1}^{N_{n,L}}|\min\{|M_{n,L,k}|,M_0\}|^{2/3}
\\[1ex]
& & \mbox{}-\omega(n) N_{n,L}\cdot(1-\eta)\left(\frac{8}{3}\cdot 6^{2/3}+\eta\right)
\\[1ex]
& & \mbox{}+\left(1-\frac{1}{\eta}\right)
\frac{\beta}{\omega(n) ^{2}}\int_{0}^{A_{n,L}}(f(x)-f_{n,L}(x))^{2}\,dx.
\end{eqnarray*}

Now we let $n\to +\infty$, and we exploit (\ref{lim:fnL'tof'}) in the first line, the fact that $\omega(n) N_{n,L}\to 1/L$ in the second line, and (\ref{lim:fnLtof}) in the third line. We conclude that
\begin{multline*}
\liminf_{n\to +\infty}\frac{\DPMF_{n}(\beta,f,(0,1),u_{n})}{\omega(n)^{2}} \geq\\
\geq (1-\eta)\left\{\beta^{1/3} \int_{0}^{1}\min\{|f'(x)|,M_0\}^{2/3}\,dx
-\frac{1}{L}\left(\frac{8}{3}\cdot 6^{2/3}+\eta\right)\right\}.
\end{multline*}

Finally, letting $\eta\to 0^{+}$, $L\to +\infty$ and $M_0\to +\infty$, we obtain exactly (\ref{est:lim-DPMF-below}).

\subsubsection{Estimate from above}

We show the existence of a sequence $\{u_{n}\}$ of functions such that $u_n\in PC_{1/n} (0,1)$ for which (\ref{est:lim-DPMF-above}) holds true. This amounts to proving the asymptotic optimality of all the steps in the proof of the estimate from below.

\paragraph{\textmd{\textit{Interval subdivision and approximation of the forcing term}}}

Let us fix again two real numbers $L>0$ and $\eta\in(0,1)$, and for every integer $n\geq 2$ let us define $L_{n}$ as in (\ref{def:LNA}), while, instead of $N_{n,L}$ and $A_{n,L}$, let us consider now
$$\widehat{N}_{n,L}:=\left\lceil \frac{1}{L_n \omega(n)}\right\rceil, \qquad \widehat{A}_{n,L}:=L_n\omega(n) \widehat{N}_{n,L}.$$

We observe that $\widehat{N}_{n,L}$ is an integer, and that $\widehat{A}_{n,L}\to 1^+$ when $n\to +\infty$.

Now we extend $f$ to $[0,\widehat{A}_{n,L})$ just by setting $f(x)=f(1)$ for $x \in [1,\widehat{A}_{n,L})$, and we consider the intervals $I_{n,k}$ as in (\ref{defn:Ink}), but now for $k\in\{1,\dots,\widehat{N}_{n,L}\}$, and the piecewise affine function $f_{n,L}:[0,\widehat{A}_{n,L})\to\R$ as in (\ref{defn:fnL}). Of course (\ref{lim:fnLtof}) and (\ref{lim:fnL'tof'}) are still valid if we replace $N_{n,L}$ and $A_{n,L}$ with $\widehat{N}_{n,L}$ and $\widehat{A}_{n,L}$.

Then we exploit the inequality
\begin{equation*}
(a+b)^{2}\leq(1+\eta)a^{2}+\left(1+\frac{1}{\eta}\right)b^{2}
\qquad
\forall\eta\in(0,1),
\quad
\forall (a,b)\in\re^{2},
\end{equation*}
and for every $u\in PC_{1/n} (0,\widehat{A}_{n,L})$ we obtain the estimate
\begin{eqnarray*}
\DPMF_{n}(\beta,f,(0,1),u) & \leq &
(1+\eta)\DPMF_{n}(\beta,f_{n,L},(0,\widehat{A}_{n,L}),u)
\\[1ex]
& & \mbox{}+\left(1+\frac{1}{\eta}\right)\beta\int_{0}^{\widehat{A}_{n,L}}(f(x)-f_{n,L}(x))^{2}\,dx.
\label{est:DPMF-eta-above}
\end{eqnarray*}

\paragraph{\textmd{\textit{Reduction to a common interval}}}

We claim that there exists $u_{n}\in PC_{1/n}(0,1)$ such that
\begin{equation*}
\DPMF_{n}(\beta,f_{n,L},(0,\widehat{A}_{n,L}),u_{n})  = 
\omega(n) ^{3}\sum_{k=1}^{\widehat{N}_{n,L}}\mu_{n}^{*}(\beta,L_{n},M_{n,L,k})
\end{equation*}
where $\mu_{n}^{*}$ is defined by (\ref{def:mu_n*}).

To this end, we observe that the equalities
\begin{eqnarray*}
\DPMF_{n}(\beta,f_{n,L},(0,\widehat{A}_{n,L}),u_{n}) & = &
\sum_{k=1}^{\widehat{N}_{\ep,L}}\DPMF_{n}(\beta,f_{n,L},I_{n,k},u_{n})
\\[1ex]
& = & 
\omega(n)^{3}\sum_{k=1}^{\widehat{N}_{n,L}}\RDPMF_{n}(\beta,M_{n,L,k}\,x,(0,L_{n}),v_{n,L,k})
\end{eqnarray*}
hold true for every $u_{n}\in PC_{1/n}(0,\widehat{A}_{n,L})$, provided that $u_{n}(x)$ and $v_{n,L,k}(x)$ are related by (\ref{defn:vnLK}) and
\begin{equation}\label{eq:un_BC}
D^{1/n}u_n(x)=0 \qquad \forall x \in [kL_n\omega(n) - 1/n, kL_n\omega(n))\quad \forall k \in \{1,\dots,\widehat{N}_{n,L}-1\}.
\end{equation}

Therefore it is enough to choose $u_{n}$ in such a way that $v_{n,L,k}$ coincides with a minimizer in the definition of $\mu_{n}^{*}(\beta,L_{n},M_{n,L,k})$ for every admissible choice of $k$.

Since $v_{n,L,k} \in PC_{\delta(n)} (0,L_n)$, it follows that the resulting function $u_n$ belongs to the space $PC_{1/n}(0,\widehat{A}_{n,L})$ and, due to the boundary conditions in (\ref{def:mu_n*}), we deduce that in all the nodes of the form $x=kL_{n}\omega(n)$ the function $u_n(x)$ is continuous and coincides with the forcing term $f(x)$. As a consequence, $u_n$ satisfies (\ref{eq:un_BC}). 

\paragraph{\textmd{\textit{Convergence to minima of the limit problem}}}

As in the case of the estimates from below we rely on Proposition~\ref{prop:mu_mu*}, and in particular on the monotonicity in statement~(2), the uniform convergence in statement~(5), and the estimates (\ref{est:mu_delta*}) and (\ref{eq:monot_mu_n*}), in order to deduce that for every $M_0>0$ there exists $n_{0}\in\N$ such that for every $n\geq n_0$ it holds
\begin{eqnarray*}
\mu_{n}^{*}(\beta,L_n,M_{n,L,k})
&\leq& \mu_n ^{*}(\beta,L,M_{n,L,k})+ \frac{\log(2M_{n,L,k} ^2+2)}{\log n} + \frac{\beta M_{n,L,k} ^2 \delta(n)^3}{3}\\
&\leq& \mu ^{*}(4/3,\beta,L,M_{n,L,k})+ \frac{\log(2M_0 ^2+2)}{\log n} + \frac{\beta M_0 ^2 \delta(n)^3}{3} + \eta
\end{eqnarray*}
for every $k\in\{1,\ldots,\widehat{N}_{n,L}\}$ such that $|M_{n,L,k}|\leq M_0$, and
\begin{eqnarray*}
\mu_{n}^{*}(\beta,L_n,M_{n,L,k})&\leq& \left(\frac{M_{n,L,k}}{M_0}\right)^2 \mu_{n}^{*}(\beta,L_n,M_0)\\
&\leq&\left(\frac{M_{n,L,k}}{M_0}\right)^2 \left(\mu_n ^{*}(\beta,L,M_0) + \frac{\log(2M_0 ^2+2)}{\log n} + \frac{\beta M_0 ^2 \delta(n)^3}{3} \right)\\
&\leq&\left(\frac{M_{n,L,k}}{M_0}\right)^2 \left(\mu^{*}(4/3,\beta,L,M_0) + \frac{\log(2M_0 ^2+2)}{\log n} + \frac{\beta M_0 ^2 \delta(n)^3}{3}+ \eta\right)
\end{eqnarray*}
for every $k\in\{1,\ldots,\widehat{N}_{n,L}\}$ such that $|M_{n,L,k}|> M_0$.

Now we exploit the estimate from above in (\ref{eq:estimate_mu_above}), and we obtain that
\begin{equation*}
\mu^{*}(4/3,\beta,L,M_{n,L,k})\leq
\beta^{1/3}|M_{n,L,k}|^{2/3}L+2,
\end{equation*}
and
\begin{equation*}
\left(\frac{M_{n,L,k}}{M_0}\right)^2 \mu^{*}(4/3,\beta,L,M_0)\leq
\beta^{1/3}\frac{(M_{n,L,k})^2}{M_0^{4/3}}L+2\left(\frac{M_{n,L,k}}{M_0}\right)^2 ,
\end{equation*}

We can unify the previous estimates in the following inequality
\begin{eqnarray*}
\mu_{n}^{*}(\beta,L,M_{n,L,k})&\leq& \beta^{1/3} \frac{(M_{n,L,k})^2}{\min\{|M_{n,L,k}|,M_0\}^{4/3}} L \\
&&\mbox{} +\left(\frac{\max\{|M_{n,L,k}|,M_0\}}{M_0}\right)^2 \left( 2+\eta+\frac{\log(2M_0 ^2+2)}{\log n} + \frac{\beta M_0 ^2 \delta(n)^3}{3}\right),
\end{eqnarray*}
that holds for every $n\geq n_0$ and every $k\in\{1,\ldots,\widehat{N}_{n,L}\}$.

\paragraph{\textmd{\textit{Conclusion}}}

As in the estimate from below, we sum over $k$ and we conclude that
\begin{eqnarray*}
\frac{\DPMF_{n}(\beta,f,(0,1),u_{n})}{\omega(n)^{2}} & \leq &
(1+\eta)\beta^{1/3}\frac{L}{L_n} L_n \omega(n) \sum_{k=1}^{\widehat{N}_{n,L}}\frac{(M_{n,L,k})^2}{\min\{|M_{n,L,k}|,M_0\}^{4/3}}
\\[1ex]
& & \mbox{}+\frac{1+\eta}{L_n}\left(2+\eta+\frac{\log(2M_0 ^2+2)}{\log n} + \frac{\beta M_0 ^2 \delta(n)^3}{3}\right)
\\[0.5ex]
& & \quad\mbox{}\cdot\omega(n) L_n \sum_{k=1}^{\widehat{N}_{n,L}}\left(\frac{\max\{|M_{n,L,k}|,M_0\}}{M_0}\right)^2
\\[1ex]
& & \mbox{}+\left(1+\frac{1}{\eta}\right)
\frac{\beta}{\omega(n)^{2}}\int_{0}^{\widehat{A}_{n,L}}(f(x)-f_{n,L}(x))^{2}\,dx.
\end{eqnarray*}

Letting $n\to +\infty$, from (\ref{lim:fnL'tof'}) and (\ref{lim:fnLtof}) (with $\widehat{N}_{n,L}$ and $\widehat{A}_{n,L}$ in place of $N_{n,L}$ and $A_{n,L}$) we deduce that this family $\{u_{n}\}$ satisfies
\begin{eqnarray*}
\limsup_{n\to +\infty}\frac{\DPMF_{n}(\beta,f,(0,1),u_{n})}{\omega(n) ^{2}} &\leq& 
(1+\eta)\beta^{1/3}\int_{0}^{1}\frac{|f'(x)|^{2}}{\min\{|f'(x)|,M_0\}^{4/3}}\,dx\\[0.5ex]
& & \mbox{}+\frac{(1+\eta)(2+\eta)}{L} \int_{0} ^{1} \left(\frac{\max\{|f'(x)|,M_0\}}{M_0}\right)^2 \, dx.
\end{eqnarray*}

Now we observe that the right-hand side tends to the right-hand side of (\ref{est:lim-DPMF-above}) when $\eta\to 0^{+}$, $L\to +\infty$ and $M_0 \to +\infty$. Therefore, with a standard diagonal procedure we can find a family $\{u_{n}\}$ for which exactly (\ref{est:lim-DPMF-above}) holds true.
\qed

\subsection{Proof of Theorem \ref{thm:blow-up}}

\subsubsection{Proof of statement (1)}

The proof relies on Proposition~\ref{prop:convergence_minimizers}, applied with
$$M=f'(x_\infty),\qquad
A_k=-\frac{x_{n_k}}{\omega(n_k)}, \qquad
B_k=\frac{1-x_{n_k}}{\omega(n_k)},\qquad w_k=w_{n_k}$$
and
$$g_k(x)=\frac{f(x_{n_k}+\omega(n_k) x)-f(x_{n_k})}{\omega(n_k)}$$

Let us check that all the assumptions are verified. First of all, we observe that $A_k\to -\infty$ and $B_k\to +\infty$, while $g_k(x)\to Mx$ uniformly on bounded sets because $f \in C^1([0,1])$.

We also have that $w_{k} \in PC_{\delta(n_k)}(A_k,B_k)$, because $u_{n_k} \in PC_{1/n_k}(0,1)$ and $n_k x_{n_k}\in\Z$. Moreover, with a change of variable, we obtain that
\begin{equation}\label{eq:change_variable_wn}
\DPMF_{n_k}(\beta,f,(0,1),u_{n_k})=\omega(n_k) ^3 \RDPMF_{n_k}(\beta,g_k,(A_k,B_k),w_k),\end{equation}
hence $w_k$ is a minimizer for $\RDPMF_{n_k}(\beta,g_k,(A_k,B_k),\cdot)$.

Finally, estimate (\ref{hp:RDPMF<C/delta}) follows from (\ref{eq:change_variable_wn}) and Theorem~\ref{thm:min}.

Therefore from Proposition~\ref{prop:convergence_minimizers} we deduce that there exists a subsequence $\{w_{n_{k_h}}\}$ that converges locally strictly in $BV_{loc}(\R)$ to an entire local minimizer $w_\infty$ for $\JF(4/3,\beta,Mx,\cdot,\cdot)$.

By Proposition~\ref{prop:local_minimizers}, $w_\infty$ is an oblique translation of the canonical $(H,V)$-staircase, as required.

\subsubsection{Proof of statement (2)}

We observe that $v_n(x)=w_n(x)-w_n(0)$, so the behavior of the sequence $\{v_n\}$ can be deduced from that of $\{w_n\}$.

In particular, if $w_{n_k}\auto w_\infty$ and $w_\infty=S_{H,V}(x-H\tau_0)+V\tau_0$, for some $\tau_0\in (-1,1)$, then $w_\infty$ is continuous at $0$, so from the strict convergence we deduce that $v_{n_k}(x)\auto w_\infty(x)-w_\infty(0)=S_{H,V}(x-H\tau_0)$, which is a graph translation of horizontal type of $S_{H,V}$, as required.

On the other hand, if $w_\infty = S_{H,V}(x-H)+V$ is the graph translation corresponding to $\tau_0=\pm 1$, then from the strict convergence we only deduce that
$$-|V|\leq \liminf_{k\to +\infty} w_{n_k}(0)\leq \limsup_{k\to +\infty} w_{n_k}(0)\leq |V|.$$

However, from the last part of Proposition~\ref{prop:convergence_minimizers} we deduce that the only possible limit points for $\{w_{n_k}(0)\}$ are $\pm V$. Hence, up to the extraction of another subsequence, we have that either $v_{n_k}\auto S_{H,V}(x-H)$ or $v_{n_k}\auto S_{H,V}(x-H)+2V=S_{H,V}(x+H)$, and both these functions are graph translation of horizontal type of $S_{H,V}$, as required.

\subsubsection{Proof of statement (3) and (4)}

The last two statement can be proved exactly as in \cite{GP:fastPM-CdV}, so we just recall the main steps. First of all, one can obtain (\ref{fakebu_to_w}) with $w$ equal to the canonical staircase $S_{H,V}$ by choosing the point $x_n'\in [x_n - H\omega(n),x_n+H\omega(n)]$ that minimize some distance between $S_{H,V}$ and the function
$$y\mapsto \frac{u_n(x_n'+\omega_n y)-f(x_n')}{\omega(n)}.$$

At this point one can obtain any other oblique translation of $S_{H,V}$ with a suitable translation of the points $x_n'$, thus completing the proof of statement (3).

Statement (4) can be proved using the same sequences $\{x_n'\}$ coming from statement (3) and recalling the equality $v_n(x)=w_n(x)-w_n(0)$ that we already exploited in the proof of statement (2).

\subsubsection*{\centering Acknowledgments}

The author is a member of the \selectlanguage{italian} ``Gruppo Nazionale per l'Analisi Matematica, la Probabilità e le loro Applicazioni'' (GNAMPA) of the ``Istituto Nazionale di Alta Matematica'' (INdAM). 

\selectlanguage{english}

\label{NumeroPagine}


\begin{thebibliography}{10}
\providecommand{\url}[1]{\texttt{#1}}
\providecommand{\urlprefix}{URL }
\providecommand{\selectlanguage}[1]{\relax}
\providecommand{\eprint}[2][]{\url{#2}}

\bibitem{2001-CPAM-AM}
\textsc{G.~Alberti}, \textsc{S.~M\"{u}ller}.
\newblock A new approach to variational problems with multiple scales.
\newblock \emph{Comm. Pure Appl. Math.} \textbf{54} (2001), no.~7, 761--825.

\bibitem{1998-ABG}
\textsc{R.~Alicandro}, \textsc{A.~Braides}, \textsc{M.~S.~Gelli}
\newblock Free-discontinuity problems generated by singular perturbation.
\newblock \emph{Proc. Roy. Soc. Edinburgh Sect. A} \textbf{128} (1998), no.~6, 1115--1129.

\bibitem{2007-Amann}
\textsc{H.~Amann}.
\newblock Time-delayed {P}erona-{M}alik type problems.
\newblock \emph{Acta Math. Univ. Comenian. (N.S.)} \textbf{76} (2007), no.~1,
  15--38.

\bibitem{AFP}
\textsc{L.~Ambrosio}, \textsc{N.~Fusco}, \textsc{D.~Pallara}.
\newblock \emph{{Functions of bounded variation and free discontinuity
  problems}}.
\newblock Oxford Mathematical Monographs, 2000.

\bibitem{2008-TAMS-BF}
\textsc{G.~Bellettini}, \textsc{G.~Fusco}.
\newblock The {$\Gamma$}-limit and the related gradient flow for singular
  perturbation functionals of {P}erona-{M}alik type.
\newblock \emph{Trans. Amer. Math. Soc.} \textbf{360} (2008), no.~9,
  4929--4987.


\bibitem{2008-JDE-BNPT}
\textsc{G.~Bellettini}, \textsc{M.~Novaga}, \textsc{M.~Paolini},
  \textsc{C.~Tornese}.
\newblock Convergence of discrete schemes for the {P}erona-{M}alik equation.
\newblock \emph{J. Differential Equations} \textbf{245} (2008), no.~4,
  892--924.


\bibitem{2009-Calcolo-BNPT}
\textsc{G.~Bellettini}, \textsc{M.~Novaga}, \textsc{M.~Paolini},  \textsc{C.~Tornese}.
\newblock Classification of equilibria and {$\Gamma$}-convergence for the discrete {P}erona-{M}alik functional.
\newblock \emph{Calcolo} \textbf{46} (2009), no.~4, 221--243.

\bibitem{2011-M3AS-BelNovPao}
\textsc{G.~Bellettini}, \textsc{M.~Novaga}, \textsc{M.~Paolini}.
\newblock Convergence for long-times of a semidiscrete {P}erona-{M}alik
  equation in one dimension.
\newblock \emph{Math. Models Methods Appl. Sci.} \textbf{21} (2011), no.~2,
  241--265.

\bibitem{2018-ActApplMath-BraVal}
\textsc{A.~Braides}, \textsc{V.~Vallocchia}.
\newblock Static, quasistatic and dynamic analysis for scaled {P}erona-{M}alik
  functionals.
\newblock \emph{Acta Appl. Math.} \textbf{156} (2018), 79--107.

\bibitem{1992-SIAM-Lions}
\textsc{F.~Catt\'{e}}, \textsc{P.-L. Lions}, \textsc{J.-M. Morel},
  \textsc{T.~Coll}.
\newblock Image selective smoothing and edge detection by nonlinear diffusion.
\newblock \emph{SIAM J. Numer. Anal.} \textbf{29} (1992), no.~1, 182--193.

\bibitem{2011-SIAM-SlowSD}
\textsc{M.~Colombo}, \textsc{M.~Gobbino}.
\newblock Slow time behavior of the semidiscrete {P}erona-{M}alik scheme in one dimension.
\newblock \emph{SIAM J. Math. Anal.} \textbf{43} (2011), no.~6, 2564--2600.

\bibitem{1996-Duke-DeGiorgi}
\textsc{E.~De~Giorgi}.
\newblock Conjectures concerning some evolution problems.
\newblock volume~81, pages 255--268. 1996.
\newblock \urlprefix\url{https://doi.org/10.1215/S0012-7094-96-08114-4}.
\newblock A celebration of John F. Nash, Jr.

\bibitem{2001-CPAM-Esedoglu}
\textsc{S.~Esedo\={g}lu}.
\newblock An analysis of the {P}erona-{M}alik scheme.
\newblock \emph{Comm. Pure Appl. Math.} \textbf{54} (2001), no.~12, 1442--1487.

\bibitem{GG:grad-est}
\textsc{M.~Ghisi}, \textsc{M.~Gobbino}.
\newblock Gradient estimates for the {P}erona-{M}alik equation.
\newblock \emph{Math. Ann.} \textbf{337} (2007), no.~3, 557--590.

\bibitem{2001-GM}
\textsc{M.~Gobbino}, \textsc{M.~G.~Mora}
\newblock Finite-difference approximation of free-discontinuity problems.
\newblock \emph{Proc. Roy. Soc. Edinburgh Sect. A} \textbf{131} (2001), no.~3, 567--595.


\bibitem{GP:fastPM-CdV}
\textsc{M.~Gobbino}, \textsc{N.~Picenni}.
\newblock A quantitative variational analysis of the staircasing phenomenon for a second order regularization of the {P}erona-{M}alik functional.
\newblock \emph{Trans. Amer. Math. Soc.} \textbf{376} (2023), pp. 5307--5375

\bibitem{2023-PM-TV}
\textsc{M.~Gobbino}, \textsc{N.~Picenni}.
\newblock Monotonicity properties of limits of solutions to the semi-discrete scheme for the Perona-Malik equation.
\newblock arXiv:2304.04729 (2023)


\bibitem{2009-JDE-Guidotti}
\textsc{P.~Guidotti}.
\newblock A new nonlocal nonlinear diffusion of image processing.
\newblock \emph{J. Differential Equations} \textbf{246} (2009), no.~12,
  4731--4742.

\bibitem{Kichenassamy}
\textsc{S.~Kichenassamy}.
\newblock The {P}erona-{M}alik paradox.
\newblock \emph{SIAM J. Appl. Math.} \textbf{57} (1997), no.~5, 1328--1342.

\bibitem{2003-M3AS-MN}
\textsc{M.~Morini}, \textsc{M.~Negri}.
\newblock Mumford-{S}hah functional as {$\Gamma$}-limit of discrete {P}erona-{M}alik energies.
\newblock \emph{Math. Models Methods Appl. Sci.} \textbf{13} (2003), no.~6, 785--805.

\bibitem{PeronaMalik}
\textsc{P.~Perona}, \textsc{J.~Malik}.
\newblock Scale-space and edge detection using anisotropic diffusion.
\newblock Technical report, EECS Department, University of California,
  Berkeley, Dec 1988.

\bibitem{2012-ARMA-SmaTes}
\textsc{F.~Smarrazzo}, \textsc{A.~Tesei}.
\newblock Degenerate regularization of forward-backward parabolic equations: the regularized problem
\newblock \emph{Arch. Ration. Mech. Anal.} \textbf{204} (2012), no.~1, 85--139.

\end{thebibliography}
\end{document}